\crefname{prop}{Proposition}{Propositions}   % Required to generate plural forms. See Section 8.1.3 ("Automatic \newtheorem Definitions") of cleveref documentation.
\crefname{const}{Construction}{Constructions}
\newtheorem{thm}{Theorem}[section]
\newtheorem{cor}[thm]{Corollary}
\newtheorem{prop}[thm]{Proposition}
\newtheorem{lem}[thm]{Lemma}
\newtheorem{lemma}[thm]{Lemma}
\newtheorem{claim}[thm]{Claim}
\newtheorem{quest}[thm]{Question}
\theoremstyle{definition}
\newtheorem{rem}[thm]{Remark}
\newtheorem{const}[thm]{Construction}
\newcommand{\Z}{\mathbb{Z}}
\newcommand{\del}{\overline{\delta}}
\newcommand{\gam}{\underline{\gamma}}
\newcommand{\sub}{\subseteq}
\newcommand{\Square}{J_{1/2}}
\newcommand{\Triangle}{J_{1/4}}
\newcommand{\ep}{\varepsilon}
\renewcommand{\phi}{\varphi}
\newcommand{\R}{\mathbb{R}}
\newcommand*{\deftobe}{\mathrel{\coloneqq}}   % := symbol for definitions
\newcommand{\imin}{J_{1/4}^*}   % the canonical internal minimal triangle
\DeclarePairedDelimiter{\braces}{\lbrace}{\rbrace}
\newcommand*{\setof}[1]{\braces*{#1}}   % braces for sets
\DeclarePairedDelimiter{\verts}{\lvert}{\rvert}
\newcommand*{\card}{\verts*}   % cardinality of a set
\newcommand*{\st}{\,:\,}   % "such that" for sets
\newcommand*{\limp}{\Longrightarrow}   % logical implication
\renewcommand{\subset}{\subseteq}  % I prefer ⊆ over ⊂. But I won't insist. -TM
\DeclarePairedDelimiter{\floors}{\lfloor}{\rfloor}
\newcommand*{\floor}{\floors*}
\renewcommand{\cong}{\equiv}   % Congruence modulo whatever
\newcommand{\locationofprograms}{ in the supplementary files. } 
\tikzset{
	vtx/.style={inner sep=2.1pt, outer sep=0pt, circle, fill=blue!50!white,draw=black},
	vtx2/.style={inner sep=2.1pt, outer sep=0pt, circle, fill=red!50!white,draw=black},
	vtx4/.style={inner sep=3.5pt, outer sep=0pt, circle, fill=blue!50!white,draw=black},
	triangle/.style={fill=pink,opacity=0.5,line width=1pt},
	novtx/.style={cross out, draw=red, line width=2pt},
	gsvtx/.style={inner sep=2.1pt, outer sep=0pt, rectangle, fill=green!50!white,draw=black},
	gs2vtx/.style={inner sep=1.8pt, outer sep=0pt, regular polygon,regular polygon sides=5, fill=red!50!white,draw=black},
	gs3vtx/.style={inner sep=1.4pt, outer sep=0pt, diamond, fill=yellow!50!white,draw=black},
	gs4vtx/.style={inner sep=3.5pt, outer sep=0pt, rectangle, fill=green!50!white,draw=black},
	axisedge/.style={-latex, line width=1.5pt},
}
\newcommand{\xtriangle}[3]{
	\draw[fill=green,opacity=0.5,line width=1pt] (#1)--(#2)--(#3)--(#1);
	\draw[line width=1pt] (#1)--(#2)--(#3)--(#1);
}
\newcommand{\xtriwindow}[4]{ % Make triangle window around 3 corners #1, #2, #3, adding $\leq #4$ at bottom right
	\draw[dashed, fill=yellow,opacity=0.5,line width=1pt, rounded corners] ($(#1)+(-0.4,-0.2)$)--($(#2)+(0.2,-0.2)$)--($(#3)+(0.2,0.4)$)--cycle;
	\draw ($(#2)+(0.5,-0.5)$) node[]{$\le #4 $};
}
\newcommand{\xinvtriwindow}[4]{ % Make triangle window around 3 corners #1, #2, #3, adding $\leq #4$ at bottom right
	\draw[dashed, fill=yellow,opacity=0.5,line width=1pt, rounded corners] ($(#1)+(-0.2,0.2)$)--($(#2)+(0.4,0.2)$)--($(#3)+(-0.2,-0.4)$)--cycle;
	\draw ($(#2)+(0.5,0.5)$) node[]{$\le #4 $};
}
\newcommand{\xwindow}[3]{ % Make rectangle window around #1:top-left and #2:bottom right corners, adding $\leq #3$ at bottom right%
	\begin{scope}[xshift=0cm,yshift=0cm]
		\draw[dashed, fill=yellow,opacity=0.5,line width=1pt, rounded corners] ($(#1)+(-0.2,0.2)$) rectangle ($(#2)+(0.2,-0.2)$);
		\draw ($(#2)+(0.5,-0.5)$) node[]{$\le #3$};
	\end{scope}
}
\newcommand{\xcenteredwindow}[3]{ % Make rectangle window around #1:top-left and #2:bottom right corners, adding $\leq #3$ at center%
	\begin{scope}[xshift=0cm,yshift=0cm]
		\draw[dashed, fill=yellow,opacity=0.5,line width=1pt, rounded corners] ($(#1)+(-0.2,0.2)$) rectangle ($(#2)+(0.2,-0.2)$);
		\draw ($(#2)+(-0.5,0.5)$) node[]{$\le #3$};
	\end{scope}
}
\newcommand{\xskewwindow}[4]{ % Make window around 4 corners #1, #2, #3, #4
	\draw[dashed, fill=yellow,opacity=0.5,line width=1pt, rounded corners] ($(#1)+(-0.4,-0.2)$)--($(#2)+(0.2,-0.2)$)--($(#3)+(0.4,0.2)$)--($(#4)+(-0.2,0.2)$)--cycle;
	\draw ($(#2)+(0.5,1)$) node[]{$\le 2$};
}
\newcommand{\xfivewindow}[5]{ % Make window around 5 corners
	\draw[dashed, fill=yellow,opacity=0.5,line width=1pt, rounded corners] ($(#1)+(-0.4,-0.2)$)--($(#2)+(0.2,-0.2)$)--($(#3)+(0.2,0)$)--($(#4)+(0.2,0.2)$)--($(#5)+(-0.2,0.2)$)--cycle;
	\draw ($(#2)+(0.5,1)$) node[]{$\le 2$};
}
\title{Triangle Percolation on the Grid}
\author{
	Igor Araujo\thanks{Department of Mathematics, University of Illinois at Urbana-Champaign, Urbana, IL. Email: \url{igoraa2@illinois.edu}. Research partially supported by UIUC Campus Research Board RB 22000.} 
	\qquad 
	Bryce Frederickson\thanks{Department of Mathematics, Emory University, Atlanta, GA, E-mail: \url{bfrede4@emory.edu}.}
	\qquad  
	Robert A. Krueger\thanks{Department of Mathematics, University of Illinois at Urbana-Champaign, Urbana, IL. Email: \url{rak5@illinois.edu}. Research supported the NSF Graduate Research Fellowship Program Grant No. DGE 21-4675.}
	\\
	Bernard Lidick\'y\thanks{Department of Mathematics, Iowa State University, Ames, IA. E-mail: \url{lidicky@iastate.edu}. Research of this author is supported in part by NSF grant  DMS-2152490 and Scott Hanna fellowship.}
	\qquad 
	Tyrrell B.~McAllister\thanks{Department of Mathematics and Statistics, University of Wyoming, Laramie, WY. E-mail: \url{tmcallis@uwyo.edu}.}
	\qquad  
	Florian Pfender\thanks{Department of Mathematical and Statistical Sciences, University of Colorado Denver. E-mail: \url{Florian.Pfender@ucdenver.edu}. Research is partially supported by NSF grant DMS-2152498.}
	\\ 
	Sam Spiro\thanks{Department~of Mathematics, Rutgers University, Piscataway, NJ. Email: \url{sas703@scarletmail.rutgers.edu}. Research supported by the NSF Mathematical Sciences Postdoctoral Research Fellowships Program under Grant DMS-2202730.}
	\qquad  
	Eric Nathan Stucky\thanks{E-mail: \url{stuck127@umn.edu}. Research partially supported by Czech Science Foundation grant 21-00420M, administered through Charles University, Faculty of Mathematics and Physics, Department of Algebra.}
}
\date{\today}
\begin{document}
	
	\maketitle
	
	\begin{abstract}
		We consider a geometric percolation process partially motivated by recent work of Hejda and Kala.  Specifically, we start with an initial set $X \subseteq \Z^2$, and then iteratively check whether there exists a triangle~$T \subseteq \R^2$ with its vertices in $\mathbb{Z}^2$ such that $T$ contains exactly four points of $\mathbb{Z}^2$ and exactly three points of~$X$.  In this case, we add the missing lattice point of~$T$ to~$X$, and we repeat until no such triangle exists.  We study the limit sets~$S$, the sets stable under this process, including determining their possible densities and some of their structure.
	\end{abstract}

	\newpage
	
	\section{Introduction}
	
	\textit{Bootstrap percolation} is a well-studied model in graph theory that describes how vertices in a network propagate information to their neighbors. Initiated over 40 years ago by physicists to study models of magnetization in statistical mechanics (see, e.g.,~\cite{AizenmanLebowitz1988, ChaLeiRei1979}), it has also been applied in sociology and computer science. The underlying theory has also attracted mathematical study; see, e.g.,~\cite{BalBol2003, BalPerPet2006} for early examples of the graph-theoretic formulation for this model, and the introductions of~\cite{BalBolMorSmi2023,BalBolDumMor2012} for contemporary work.
	
	More recently, bootstrap percolation has been implemented on network structures beyond graphs. In particular, the pioneering paper by Balogh, Bollob\'as, Morris and Riordin~\cite{BalBolMorRio2012} defined the following variant of the model for hypergraphs.
	Given a hypergraph $H$ with vertex set $V$ and hyperedge set $E$, for any subset $X$ of vertices, we define
	$$p_H(X) = X \cup \Big\{ v\in V: \exists e\in E \text{ such that } v\in e \text{ and } e\subset X\cup\{v\}\Big\}.$$
	In other words we grow $X$ by adding vertices that ``complete a hyperedge''. We let $p_H^n(X) = p_H( \cdots p_H(X) \cdots )$, iterated $n$ times, and we say that a subset $S$ is \textit{$H$-stable} (or simply, \textit{stable}) if $S=p_H(S)$. That original paper considered hypergraphs with vertices in a finite grid whose hyperedges are products of intervals and computed a well-studied invariant in the graph case: the smallest size of a set $X$ such that $p_H^\infty(X)=V$.
	
	This led to an active study on a class of translation-invariant hypergraphs on $\mathbb{Z}^d$, written in the language of cellular automata~\cite{GravnerGriffeath1999} and the ``update families'' of Bollob\'as, Smith, and Uzzell~\cite{BolSmiUzz2015}. Their paper spawned a great deal of interest in the update family formulation (see, e.g.,~\cite{ BalBolSmi2023, BalBolPrzSmi2016, BolDumMorSmi2023, Hartarsky2021, HartarskySzabo2023}).
	Other developments in this field have had a more geometric flavor, considering hypergraphs arising from lines in $\mathbb{Z}^d$~\cite{BalBolLeeNar2017} and finite projective planes~\cite{GerKesMesPatViz2018}. 
	
	We continue this geometric theme. The configurations that we consider are of the form $\Delta \cap \Z^{2}$ where $\Delta \subset \R^{2}$ is a lattice triangle, that is, the convex hull in~$\R^{2}$ of three non-collinear points in $\Z^2$.  Such a triangle is determined by the set of integer-lattice points that it contains. Accordingly, we abuse terminology a bit and call a subset $T \subset \Z^2$ a \textit{triangle} when~$T$ is the set of integer-lattice points in some lattice triangle $\Delta \subset \R^{2}$.  Thus, in our terminology, the triangle~$T$ is a finite set containing at least three points.  We call $T$ a \textit{minimal} triangle if $T$ contains exactly four points.  A minimal triangle is either a \textit{border} or an \textit{internal} triangle, depending on whether the fourth point (in addition to the vertices) appears on the boundary or the interior of the triangle. 
	
	In this paper, we study stable sets with respect to three hypergraphs $B, I$, and $BI$ on the vertex set $\Z^2$, whose hyperedges are all of the border, internal, and minimal triangles, respectively. The triangles that define this percolation are simple structures, but they allow for long-distance effects to occur in just one step of the process, because the triangles may be extremely thin (see \cref{fig:I5_I7}). 
	
	Our results are framed in a somewhat different language because our original motivation for this study came instead from recent developments in the theory of  universal quadratic forms. While attempting to improve certain dimension bounds in \cite{BlomerKala2015}, Hejda and Kala \cite{HejdaKala2020} obtained a remarkable result about the totally positive integers $\mathcal{O}_K^+$ for (totally positive) quadratic number fields $K$: the abstract isomorphism type of the semigroup $\mathcal{O}_K^+$ already suffices to determine the quadratic extension $K$. The argument in their proof can be reframed as follows: a canonical choice of generators $(\dotsc, x_{-1}, x_0, x_1, \dotsc)$ comes with relations of the form $x_{i-1} + x_{i+1} = c_i x_i$ for all $i$. Once any two consecutive points are fixed, successively applying these relations fixes all the other points, which determines all relations among them. In this sense, all relations among these generators are determined by ``local'' ones. 
	
	For number fields of higher rank, the analogous generators $\mathcal{O}_K^+$ are much more elusive, and not much is known about the relations (see, e.g.,~\cite{Brunotte1983, KalaTinkova2022, KrasTinkZem2020}). Nonetheless, it is natural to wonder if there is an abstract Hejda--Kala type argument showing that only some initial data and the ``local'' relations could determine the entire semigroup structure of $\mathcal{O}_K^+$. However, note that at each stage it must be possible to determine at least one additional point from those which have already been determined; in $\mathbb{R}^1$ (rank 2) this is certainly possible, but in higher dimensions the situation is less clear. Our stable sets may be understood roughly as a worst-case analysis of such combinatorial obstructions in rank 3.
	
	\subsection{Definitions and results}\label{subsec:DefinitionsAndResults}
	
	As mentioned before, we call a subset $T \subset \Z^2$ a \textit{triangle} if $T$ is the set of integer-lattice points in the convex hull $\Delta$ in $\R^2$ of three noncollinear points in $\Z^2$. Such a triangle $T$ uniquely determines $\Delta$ (which is a triangle in the conventional sense) so we may speak of the area, vertices, boundary points, and interior points of $T$. The triangle $T$ is \textit{unimodular} if the vertices are an affine basis of the lattice~$\Z^{2}$ over $\Z$. Equivalently, the unimodular triangles are the triangles that have area $1/2$.  Pick's formula $A = i + b/2 - 1$ expresses the area $A$ of $T$ in terms of the number~$b$ of lattice points on its boundary and the number $i$ of lattice points in its interior.  From this relation, unimodular triangles are easily recognized as precisely the triangles that contain exactly $3$ points. Recall also that a  \textit{minimal} triangle is a triangle that contains exactly $4$ points, and it is either a \textit{border} or an \textit{internal} triangle, depending on whether the non-vertex point appears on the triangle's boundary or the interior.  From Pick's formula, the border triangles are precisely the triangles of area $1$, and every internal triangle has area $3/2$ (though some triangles of area $3/2$ are not internal triangles).
	
	A unimodular transformation of $\Z^{2}$ is an affine-linear bijection from
	$\Z^2$ to $\Z^2$.  Equivalently, the unimodular transformations
	of $\Z^{2}$ are the maps of the form $x \mapsto Mx + b$, where $M$ is a $2
	\times 2$ matrix with integer entries and determinant $\pm 1$, and
	$b \in \Z^{2}$.  Note that unimodular transformations also
	preserve the number of lattice points on a line segment.  Thus unimodular
	transformations preserve the property of a triangle being
	unimodular, border, or internal.
	
	Let $S$ be a subset of $\Z^{2}$.  We say that $S$ is
	\textit{B-stable} if no border triangle has exactly three of its
	points in $S$.  We call $S$ \textit{I-stable} if no internal
	triangle has exactly three of its points in $S$.  Finally, we call
	$S$ \textit{BI-stable} if $S$ is both B-stable and I-stable.  Since
	$\Z^2$ itself is BI-stable, we say that a proper subset $S$ of $\Z^2$ is a \textit{maximal}
	B/I/BI-stable set if $S$ is B/I/BI-stable and no B/I/BI-stable
	proper subset of $\mathbb Z^2$ properly contains $S$.
	
	\begin{figure}[h!]
		\centering
		\begin{tikzpicture}[scale=0.8]
			\draw (0,0)grid(7,4);
			\begin{scope}[xshift=1cm,yshift=1cm]
				\draw 
				(1,0)coordinate (a)
				(0,1)coordinate (b)
				(2,2)coordinate (c)
				;
				\xtriangle{a}{b}{c}
				\foreach \x in {a,b,c}{
					\draw (\x)node[vtx]{};
				}
				\draw(1,1) node[novtx]{};
			\end{scope}
			
			\begin{scope}[xshift=4cm,yshift=1cm]
				\draw 
				(1,0)coordinate (a)
				(0,1)coordinate (b)
				(2,2)coordinate (c)
				;
				\xtriangle{a}{b}{c}
				\foreach \x in {a,b}{
					\draw (\x)node[vtx]{};
				}
				\draw(1,1) node[vtx]{};
				\draw(c) node[novtx]{};
			\end{scope}
			\draw (3.5,0) node[below]{Not I-stable};
		\end{tikzpicture}
		\hskip 2em
		\begin{tikzpicture}[scale=0.8]
			\draw (0,0)grid(10,4);
			\begin{scope}[xshift=1cm,yshift=1cm]
				\draw 
				(1,0)coordinate (a)
				(0,1)coordinate (b)
				(2,1)coordinate (c)
				;
				\xtriangle{a}{b}{c}
				\foreach \x in {a,b,c}{
					\draw (\x)node[vtx]{};
				}
				\draw(1,1) node[novtx]{};
			\end{scope}
			
			\begin{scope}[xshift=4cm,yshift=1cm]
				\draw 
				(1,0)coordinate (a)
				(0,1)coordinate (b)
				(2,1)coordinate (c)
				;
				\xtriangle{a}{b}{c}
				\foreach \x in {a,b}{
					\draw (\x)node[vtx]{};
				}
				\draw(1,1) node[vtx]{};
				\draw(c) node[novtx]{};
			\end{scope}
			
			\begin{scope}[xshift=7cm,yshift=1cm]
				\draw 
				(1,0)coordinate (a)
				(0,1)coordinate (b)
				(2,1)coordinate (c)
				;
				\xtriangle{a}{b}{c}
				\foreach \x in {b,c}{
					\draw (\x)node[vtx]{};
				}
				\draw(1,1) node[vtx]{};
				\draw(a) node[novtx]{};
			\end{scope}
			\draw (5,0) node[below]{Not B-stable};
		\end{tikzpicture}
		\caption{Configurations that are not stable.  Specifically, if a set contains only the three blue points in one of the minimal triangles but not the red cross in that triangle, then that triangle demonstrates that the set is not stable in the indicated sense.
		}
		\label{fig:stable}
	\end{figure}
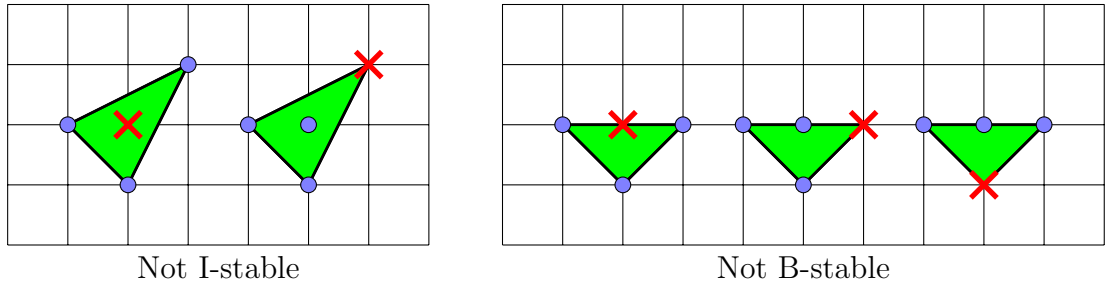		
	
	Roughly speaking, our results fall into two categories: results
	regarding the possible densities of
	B/I/BI-stable sets and results regarding the existence and uniqueness
	of B/I/BI-stable subsets that satisfy various
	conditions, such as maximality or the presence of particular
	point-configurations.
	
	Our first two theorems summarize our results on the densities of
	stable sets. We define the \textit{\textup{(}upper\textup{)} density} of a subset $X \subset \Z^2$ to be
	\begin{equation*}
		\del(X)
		\deftobe
		\limsup_{n\to \infty}
		\frac{\card{X \cap [-n,n]^2}}{\card{[-n,n]^2}},
	\end{equation*}
	where, here and throughout, $[a,b]$ denotes the interval of
	integers $\setof{i \in \Z \st a \le i \le b}$. While $\del(X)$ is
	a fairly natural way to define the density of a subset $X \sub
	\Z^2$, it is not always true that $\del(\phi(X))=\del(X)$ for
	every unimodular transformation $\phi$. 
	As an illustration, the set $\{(x,y) \in \Z^2: xy >0 \}$ is mapped to $\{(x,y) \in \Z^2: x(y-x) >0 \}$ under the unimodular map $(x,y) \mapsto (x,x+y)$, and while the former has density~$1/2$, the latter has density~$1/4$.
	However, density is preserved under unimodular transformation for sets $X$ that are periodic in two independent directions, which will
	be the case for most of the subsets $X \sub \Z^2$ that we
	consider. Our first theorem gives the possible values of these
	densities for stable sets (periodic or aperiodic).
	\begin{thm}\label{thm:main}
		Let $S\subsetneq \Z^2$.
		\begin{enumerate}[label=(\alph*)]
			\item \label{border}
			If $S$ is B-stable, then $\del(S)\le 1/4$.  Moreover, for
			every $\delta \le 1/4$, there exists a BI-stable set~$S$ with
			$\del(S) = \delta$; for instance, every subset of $2\Z^2 \coloneqq 2 \Z \times 2 \Z$ is
			BI-stable.
			
			\item \label{internal} 
			If $S$ is I-stable, then $\del(S)\le 1/2$.  Moreover, for
			every $\delta \le 1/2$, there exists an I-stable set~$S$ with
			$\del(S) = \delta$; for instance, every subset of $\Z \times
			2\Z$ is I-stable.
		\end{enumerate}
	\end{thm}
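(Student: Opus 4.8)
The plan is to treat the two ``moreover'' constructions first, since they are the more tractable half and already explain where the constants $1/4$ and $1/2$ come from, and then to attack the two density upper bounds, which I expect to be the real work.

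For the constructions I would first pin down the minimal triangles up to unimodular equivalence. Using Pick's formula as in the text, a border triangle has $b=4,\ i=0$, so it consists of three equally spaced collinear points $p,p+w,p+2w$ (with $w$ primitive) together with an apex $q$ for which $\{w,q-p\}$ is a lattice basis; an internal triangle has $b=3,\ i=1$, so its interior point is the centroid $D$ and its vertices are $D+a,D+b,D+c$ with $a+b+c=0$ and each consecutive pair a basis. The only input I actually need is the behaviour modulo $2$. In a border triangle the outer base points $p$ and $p+2w$ are congruent mod $2$, while the midpoint $p+w$ and apex $q$ carry the two distinct nonzero residues relative to $p$ (as $w$ and $q-p$ are primitive and form a basis); hence a border triangle meets at most two of the four cosets of $2\Z^2$, and in particular never has exactly three points in $2\Z^2$. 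In an internal triangle the residues of $D,D+a,D+b,D+c$ are $0,\bar a,\bar b,\bar a+\bar b$, which are four distinct classes, so an internal triangle has exactly one point in $2\Z^2$ and exactly two in $\Z\times 2\Z$.

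These two residue facts make the constructions immediate. No minimal triangle has exactly three points in $2\Z^2$, so every subset of $2\Z^2$ is BI-stable; no internal triangle has exactly three points in $\Z\times 2\Z$, so every subset of $\Z\times 2\Z$ is I-stable. To realize an arbitrary $\delta$, I would intersect the relevant example with a set of the correct relative density: any $\alpha\in[0,1]$ is the upper density of some $A\subseteq\Z^2$, and scaling $A$ into $2\Z^2$ (resp.\ $\Z\times 2\Z$) yields a stable set of upper density $\alpha/4$ (resp.\ $\alpha/2$), covering all of $[0,1/4]$ (resp.\ $[0,1/2]$). Since subsets of stable sets are stable, both ``moreover'' clauses follow.

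The upper bounds are the crux. The basic tool in both is the forcing rule from stability: for any minimal triangle, having three of its four points in $S$ forces the fourth in, lest exactly three be witnessed. For part \ref{border} I would first prove that a B-stable proper subset contains no three consecutive collinear lattice points in any primitive direction: if $p,p+w,p+2w\in S$, complete $w$ to a basis $\{w,u\}$; the border triangles on base $p,p+w,p+2w$ with apices $p+u,\ p+w+u,\ p+2w+u$ force the whole $u$-translate of the base into $S$, and likewise the $-u$-translate, so iterating in $\pm u$ fills a bi-infinite strip whose columns are full lines; three-in-a-row then occurs in direction $u$ as well, and repeating the argument in direction $u$ spreads $S$ over the whole plane, contradicting $S\subsetneq\Z^2$. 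For part \ref{internal} the analogous input is the mod-$2$ fact above: every internal triangle meets all four cosets exactly once, so I-stability becomes a $2$-$2$ constraint coupling the four cosets of $2\Z^2$.

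The hard part will be converting these local forcing statements into the global density bounds, precisely because (as the introduction warns) the triangles can be arbitrarily thin, so no fixed finite window can detect all the relevant obstructions. My plan is to organize the count by the four cosets of $2\Z^2$: writing $\rho_r$ for the relative upper density of $S$ in the coset $C_r$, one has $\del(S)\le\tfrac14\sum_r\rho_r$, and the goal becomes showing that stability keeps the cosets from being simultaneously too full, namely $\sum_r\rho_r\le 1$ in the B-case and $\sum_r\rho_r\le 2$ in the I-case. The forcing rules couple the cosets (a three-in-a-row in a primitive direction ties two points of one coset to a point of another; each internal triangle ties together one point of every coset), so the aim is to show that high density in several cosets at once manufactures a forbidden configuration. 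The main obstacle is doing this robustly for aperiodic $S$ and with only $\limsup$ control: one must make the densities in different cosets line up at a common scale and exclude dense configurations that exploit very thin triangles to remain stable. The extremal sets $2\Z^2$ and $\Z\times 2\Z$, which make the respective inequalities tight, are a useful guide and consistency check for whatever combinatorial lemma ultimately drives the bound.
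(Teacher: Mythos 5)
Your treatment of the two ``moreover'' clauses is correct and is a legitimate alternative to the paper's: where the paper counts lattice points on segments joining points of $2\Z^2$ (\cref{prop:nZConstruction}) and uses an area-divisibility argument for $\Z\times 2\Z$ (\cref{lem:internalcolumns}), you reduce everything to residues modulo $2$, observing that a border triangle has at most two of its four points in any single coset of $2\Z^2$ and that an internal triangle meets the four cosets exactly once each. Both facts are true and give the stability of all subsets of $2\Z^2$ and of $\Z\times 2\Z$ cleanly. (One wording slip: a border triangle meets exactly \emph{three} of the four cosets, not ``at most two of the four cosets''; what you need, and what your computation actually shows, is that no single coset contains three of its points.) Your forcing argument that a B-stable proper subset contains no three points of a border triangle is essentially the paper's \cref{prop:unimodularTriangle}.

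The two density upper bounds, however, are not proved; you correctly identify them as the crux and then stop at a plan. For part (a), the step from ``no three points of a border triangle'' to $\del(S)\le 1/4$ is genuinely hard: the obvious window $[3]\times[2]$ only yields $\del(S)\le 1/3$, and the paper's proof goes through \cref{lem:6x6}, the statement that $\card{S \cap [6]^2}\le 9$, which requires either an exhaustive computer search or the long case analysis in the appendix; \cref{rem:6x6} shows that no smaller rectangular window suffices. Your proposed substitute, bounding the sum of the relative upper densities of $S$ over the four cosets of $2\Z^2$ by $1$, is exactly the kind of lemma that would be needed, but you give no argument for it, and the difficulties you yourself flag (aperiodic sets, aligning $\limsup$'s across cosets, arbitrarily thin triangles) are precisely why such a statement does not follow formally from the local forcing rule. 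For part (b) the gap is larger: the mod-$2$ constraint cannot by itself yield $\del(S)\le 1/2$, because there exist I-stable sets, namely $\Triangle$ and $\Square$, that contain unimodular triangles, whole internal triangles, and adjacent points, so the ``each internal triangle meets every coset once'' coupling is compatible with rigid dense configurations and subsets of I-stable sets need not be I-stable. The paper's proof of \cref{prop:main-internal} therefore splits on whether $S$ contains a unimodular triangle: if not, every $2\times 2$ square meets $S$ in at most two points and a tiling gives $1/2$; if so, the classification in \cref{cor:X2IntExtremal}, built from \cref{lem:X1Construction,lem:2X2Construction,prop:X1IntStable,prop:X2IntStable}, shows $S$ is a unimodular image of $\Triangle$ or $\Square$, whose densities are computed directly. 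Your proposal contains neither the case split nor the classification, so the bound in part (b) is unsupported.
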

	
	The two upper bounds in \cref{thm:main} are proved in
	\cref{prop:main-border,prop:main-internal}, respectively.
	The BI-stability of $2\Z^2$ and its subsets is
	\cref{cor:completely-stable-densities}, and the I-stability of
	$\Z\times 2\Z$ and its subsets is
	\cref{cor:internal-stable-densities}.
	
	For B-stable sets (and in particular
	for BI-stable sets), the upper bound in \cref{thm:main} is
	attained by $2\Z^{2}$. Our next theorem shows that any stable set that is ``structurally far''
	from $2\Z^2$ must have a significantly
	lower density.  That is, the maximally dense construction for B-stable
	sets is ``stable'' in the sense commonly used
	in extremal combinatorics. In order to state this result
	precisely, we define the \textit{\textup{(}lower\textup{)} consecutive density}
	$\gam(X)$ of a set $X \subseteq \Z^2$ as follows: Let $\Gamma(X)$
	denote the set of points $(x,y) \in X$ such that $(x+1,y) \in X$,
	and let
	\begin{equation*}
		\gam(X)
		\deftobe
		\liminf_{n\to\infty}
		\frac{\card{\Gamma(X) \cap [-n,n]^2}}{\card{[-n,n]^2}}.
	\end{equation*}
	Note that $\gam(2\Z^2)=0$. Thus, the consecutive density provides one measure of the
	``distance'' between a set $X\subset \Z^{2}$ and~$2\Z^{2}$. We
	show that, for B-stable proper subsets \mbox{$S \subsetneq \Z^{2}$}, if $\gam(S)$ is
	large, then $\del(S)$ must be correspondingly smaller than the
	maximum given by \cref{thm:main}.  The following is proved in
	\cref{sec:consecutive}.
	\begin{thm}\label{thm:stab}
		If $S\subsetneq \Z^2$ is B-stable (in particular, if $S$ is
		BI-stable), then $\gam(S) \leq 1/9$ and $\del(S) \le (1 -
		\gam(S))/4$.  Moreover, there is a BI-stable sets $S$ with $\gam(S) = 1/9$ and $\del(S)=2/9$, and a BI-stable set $S'$ with $\gam(S')=0$ and $\del(S')=1/4$. 
	\end{thm}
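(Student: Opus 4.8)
The plan is to deduce all three upper bounds in \cref{thm:stab} from a single local packing inequality, and to exhibit the two extremal sets by hand. The starting point is a \emph{structure lemma}: if $S \subsetneq \Z^2$ is B-stable, then the graph on $S$ in which two points are joined when they differ by a unit horizontal or vertical step has maximum degree at most $1$, so $S$ is a disjoint union of isolated points and isolated ``dominoes'' (horizontal or vertical adjacent pairs). The engine is that small configurations force new points through border triangles, and the forcing cascades to all of $\Z^2$, contradicting $S \subsetneq \Z^2$. Concretely, if $\setof{(0,0),(1,0),(2,0)} \sub S$, then B-stability applied to the border triangles with point sets $\setof{(0,0),(1,0),(2,0),(0,1)}$, $\setof{(0,0),(1,0),(2,0),(1,1)}$, and $\setof{(0,0),(1,0),(2,0),(2,1)}$ (each having $(1,0)$ as its unique boundary point) forces the whole row $y=1$, and symmetrically $y=-1$; induction then forces $S=\Z^2$. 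A bent triple such as $\setof{(0,0),(1,0),(1,1)}$ forces the collinear point $(2,0)$ via the triangle $\setof{(0,0),(1,0),(2,0),(1,1)}$, reducing to the previous case. By the unimodular invariance of B-stability noted above, all orientations are handled, so three collinear consecutive points and bent triples are both impossible.

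Next I would reduce the theorem to one inequality. Writing $s_0, d_h, d_v$ for the numbers of isolated points, horizontal dominoes, and vertical dominoes of $S$ inside the box $R=[-n,n]^2$, the structure lemma gives $\card{S\cap R}=s_0+2d_h+2d_v+O(n)$ and $\card{\Gamma(S)\cap R}=d_h+O(n)$, the errors absorbing boundary effects. The key claim is
\begin{equation*}
  4 s_0 + 9 d_h + 8 d_v \le \card{R} + O(n).
\end{equation*}
Granting it, all conclusions follow: discarding $4s_0+8d_v$ and the extra $d_h$ recovers $\del(S)\le 1/4$ (consistent with \cref{thm:main}); keeping all terms yields $4\card{S\cap R}+\card{\Gamma(S)\cap R}\le \card R+O(n)$, and dividing by $\card R$, passing to a subsequence of $n$ realizing $\del(S)$, and using that the consecutive-density ratios have $\liminf \ge \gam(S)$ along any subsequence gives $4\del(S)+\gam(S)\le 1$; finally $9d_h\le\card R+O(n)$ gives $\gam(S)\le 1/9$.

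The packing inequality is the main obstacle. I would argue by a discharging/territory scheme: assign to each isolated point a translate of an area-$4$ region (a $2\times2$ block), to each horizontal domino an area-$9$ region (a $3\times3$ block containing it), and to each vertical domino an area-$8$ region, then show these territories can be made pairwise disjoint inside $R$ up to the boundary. Disjointness is exactly where further B-stability enters: beyond maximum degree $1$, one must forbid pairs of nearby features, for instance two horizontal dominoes at relative offset $(2,2)$, which are incompatible because $\setof{(0,0),(1,0),(1,1),(2,2)}$ is a border triangle and hence forces $(1,1)$ and an L. Cataloguing the finitely many forbidden relative positions of two features and verifying that the chosen (mismatched-shape) territories never overlap for compatible features is the crux of the argument; alternatively, one can replace explicit territories by a fractional cell-weighting obtained from a small linear program over local patterns, which may be cleaner to certify than hand-built regions.

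Finally, the extremal constructions. For $(\gam,\del)=(0,1/4)$ take $S'=2\Z^2$, which is BI-stable by \cref{thm:main}, consists of isolated points, and so satisfies $\gam(S')=0$ and $\del(S')=1/4$. For $(\gam,\del)=(1/9,2/9)$ take the period-$3$ set $S=\setof{(3i,3j),(3i+1,3j)\st i,j\in\Z}$; each $3\times3$ fundamental domain contains exactly one horizontal domino, giving $\del(S)=2/9$ and $\gam(S)=1/9$. It remains to confirm that $S$ is BI-stable, which by periodicity is a finite verification: no border or internal triangle meets $S$ in exactly three points, since the dominoes are mutually separated by empty rows and columns. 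Both pairs $(\gam,\del)$ then lie on the boundary line $\del=(1-\gam)/4$, matching the upper bounds.
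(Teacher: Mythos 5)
Your reduction is sound: the structure lemma (maximum degree $1$ in the unit-step graph, via the forcing arguments of \cref{prop:unimodularTriangle}) is correct, and the single inequality $4s_0+9d_h+8d_v\le\card{R}+O(n)$, equivalently $4\card{S\cap R}+\card{\Gamma(S)\cap R}\le\card{R}+O(n)$, does imply all three upper bounds; it is also true, being essentially what the paper's window count delivers ($\card{S\cap R}\le 18W-2D$ with $D\ge\card{\Gamma(S)\cap R}/8$ over $[6]\times[12]$ windows). The problem is that you never prove it. The discharging/territory scheme is exactly the crux, and you defer it to ``cataloguing the finitely many forbidden relative positions'' or ``a small linear program over local patterns'' without executing either. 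This is not a routine verification: the paper establishes the analogous facts (\cref{lem:6x6} and \cref{lem:consec16}) by exhaustive computer search over $[6]^2$ and $[6]\times[12]$, and even the human-readable proof of the weaker \cref{lem:6x6} occupies a multi-page appendix of case analysis. Your territory scheme is, if anything, more rigid than the paper's averaged window bounds (it demands an actual pairwise-disjoint assignment of blocks, which already requires care for legal configurations such as two isolated points at offset $(1,1)$), so there is no reason to expect it to be shorter. As written, the central inequality is a conjecture with a plan attached, not a proof.

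A secondary gap: for the extremal set $S_{2/9}$ you assert that BI-stability ``by periodicity is a finite verification.'' It is not: border and internal triangles have bounded lattice-point count but unbounded diameter (they can be arbitrarily long and thin), so no finite window check suffices. The paper's \cref{prop:S_2/9} handles this with an arithmetic argument — areas of triangles with vertices in $S_{2/9}$ are multiples of $3/2$, and any three points of $S_{2/9}$ include two congruent mod $3\Z^2$, whose connecting segment contains two extra lattice points — and your proof needs something of that kind rather than an appeal to separation by empty rows and columns.
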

	
	We remark that no analogous result is possible for I-stable sets. For example, there exist two very different I-stable sets $I_2$ and $\Square$ that both have the maximum possible density $1/2$. (See \cref{const:columns,const:J14_J12} below for the definitions of $I_2$ and $\Square$, respectively.)
	
	Our next two theorems summarize our results regarding the
	existence and uniqueness of stable sets under various conditions.
	The next theorem shows that, if a stable set $S$ contains certain
	small configurations, then $S$ must be one of a very few different
	sets. For the definitions of the sets $\Triangle$ and $\Square$ referred to in the theorem, see
	\cref{const:J14_J12} and \cref{fig:J14_J12} below.
	
	\begin{thm}\label{thm:StableSetsContainingConfigurations}
		Let $S \subset \Z^{2}$.
		{\makeatletter % prevent a pagebreak before the list.
			\@beginparpenalty=10000
			\makeatother
			\begin{enumerate}[label=(\alph*)]
				\item 
				\label{thmpart:StableSetsContainingConfigurations-BStable}
				If $S$ is B-stable and contains 
				any three points of a border triangle, then $S = \Z^{2}$. 
				
				\item  
				\label{thmpart:StableSetsContainingConfigurations-BIStable}
				If $S$ is BI-stable and contains any three points of a minimal triangle, then $S = \Z^{2}$.
				
				\item
				\label{thmpart:StableSetsContainingConfigurations-IStable}
				If $S$ is I-stable and contains any three points of an internal triangle, then, up to a unimodular transformation, $S$ is one of $\Triangle$,
				$\Square$, or $\Z^{2}$.
		\end{enumerate}}
	\end{thm}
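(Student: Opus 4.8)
The plan is to isolate one propagation engine for the B-stable world, reduce everything possible to it, and then treat the purely I-stable case (c) separately, where that engine fails. The engine is the following claim: \emph{if $S$ is B-stable and contains three collinear lattice points in consecutive positions — say $p,\ p+w,\ p+2w$ for a primitive vector $w$ — then $S=\Z^2$.} After a unimodular transformation we may assume $w=(1,0)$ and $p=(0,0)$, so $\{(0,0),(1,0),(2,0)\}\subseteq S$. For every integer $a$, the triangle with vertices $(0,0),(2,0),(a,1)$ has area $1$ and primitive slanted edges, so its only lattice points are these three vertices together with the base midpoint $(1,0)$; that is, it is a border triangle whose collinear triple $(0,0),(1,0),(2,0)$ lies in $S$. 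B-stability then forces the apex $(a,1)\in S$, and letting $a$ range over $\Z$ (and using the downward triangles with apex $(a,-1)$) fills the entire lines $y=1$ and $y=-1$. Each full line again contains consecutive collinear triples, so an induction on the distance from the $x$-axis fills every horizontal line, giving $S=\Z^2$. Part (a) is then immediate: the four points of a border triangle are an apex together with a consecutive collinear triple (two base vertices and their midpoint), so if $S$ is B-stable and contains any three of the four, B-stability supplies the fourth, $S$ contains the triple, and the engine yields $S=\Z^2$.

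For part (b), a minimal triangle is either a border or an internal triangle. If it is a border triangle we are done by (a), since BI-stability implies B-stability. If it is internal, I-stability first promotes the three given points to all four points of the internal triangle. I would then work from a fixed canonical internal triangle — all internal triangles are unimodularly equivalent, since the interior point is forced to be the centroid (the three sub-triangles it cuts off each have area $1/2$) and the vertices form, relative to it, a triple $\{u,v,-(u+v)\}$ with $\{u,v\}$ a lattice basis — and exhibit a short explicit chain of auxiliary minimal triangles, each already containing three known points, so that alternating applications of I- and B-stability add one new point at a time until a consecutive collinear triple appears. The engine then finishes the argument. The only thing to verify is this finite chain, a routine computation once a convenient representative is fixed.

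Part (c) is the main obstacle, because only I-stability is available and there is no I-analogue of the engine: $\Triangle$ and $\Square$ are I-stable proper subsets, so the configuration cannot propagate all the way to $\Z^2$ in general. After completing the given points to a full internal triangle, the relevant forcing move is the \emph{internal} analogue of the thin border triangle: given a unimodular triangle $\{(0,0),(1,0),(0,1)\}\subseteq S$, treating one vertex, say $(1,0)$, as the interior point of a larger triangle forces a distant new vertex — here $(3,-1)$, since the centroid condition reads $(0,0)+(0,1)+(3,-1)=3\cdot(1,0)$ — so that such moves spread $S$ along structured sublattices rather than filling space. I would set up a local analysis around the internal triangle, recording for each nearby lattice point whether I-stability forces it in, forbids it, or leaves it free; branch on the first genuinely free choices; and show that in each branch the forced memberships and non-memberships propagate to a doubly periodic pattern coinciding, up to a unimodular transformation, with exactly one of $\Triangle$, $\Square$, or $\Z^2$. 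I expect essentially all the difficulty to lie here, for two reasons: one must rule out any hybrid or intermediate stable pattern, showing that the sparse stable sets are \emph{only} $\Triangle$ and $\Square$, and one must organize the branching so that the case analysis provably terminates.
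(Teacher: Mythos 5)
Your parts (a) and (b) are correct and essentially reproduce the paper's argument (\cref{prop:unimodularTriangle}): your ``engine'' is exactly the paper's observation that once $\{(0,0),(1,0),(2,0)\}\subseteq S$, the thin border triangles $\{(0,0),(1,0),(2,0),(t,1)\}$ force every point of the line $y=1$, and iteration fills $\Z^2$. For (b) you can shorten your ``chain of auxiliary minimal triangles'' to a single observation, which is what the paper does: after normalizing the internal triangle to $\{(0,0),(1,0),(-1,-1),(0,1)\}$, the three points $(0,0),(1,0),(0,1)$ already sit inside the border triangle $\{(0,0),(1,0),(2,0),(0,1)\}$, so part (a) applies immediately; no alternation of I- and B-moves is needed.

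Part (c), however, is a genuine gap: you have written a plan, not a proof, and the plan as stated has a structural problem that the paper's proof is specifically built to overcome. I-stability is a condition quantified over \emph{all} internal triangles, which can be arbitrarily long and thin; so your proposed ``local analysis around the internal triangle, recording for each nearby lattice point whether it is forced, forbidden, or free'' is not a finite computation in any bounded window, and it is not clear your branching terminates or that you have found all forced points. Concretely, three things are missing. First, the explicit forcing chain showing that a unimodular triangle forces the entire periodic set $\Triangle$ (\cref{lem:X1Construction}): this uses non-obvious far-away points such as $(-1,3)$ and $(1,-4)$ obtained from very thin internal triangles, together with the unimodular symmetries fixing $\imin$ to propagate one translate $\imin+(4,0)$ to all of $\imin+4\Z^2$ --- a purely local window search would not find this. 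Second, the verification that $\Triangle$ and $\Square$ are actually I-stable, which cannot be done by inspecting finitely many triangles; the paper does it with the congruence lemma (\cref{lem:InternalMinimalCongruences}): for any internal triangle $\{a,b,c,d\}$ one has $a+b+c+d\equiv 0$ and $\det(b-a,c-a)\equiv\pm1 \pmod 4$, reducing stability of these $4\Z^2$-periodic sets to a finite check of residues. Third, the exhaustive case analysis (\cref{cor:X2IntExtremal}) showing that $\Triangle$ plus any one additional point forces a unimodular copy of $\Square$, and $\Square$ plus any one additional point forces $\Z^2$; this is what rules out the ``hybrid or intermediate stable patterns'' you correctly worry about. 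Until these three pieces are supplied, part (c) is unproved.
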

	
	\begin{rem}
		In the language of bootstrap percolation, this theorem implies that our models are ``supercritical'' in the classification of Bollob\'as, Smith, and Uzzell \cite{BolSmiUzz2015}. Indeed, they admit finite \emph{percolating sets} (i.e.\ finite $X$ with $p_H^\infty(X)=\Z^2$); these have minimum size 3 for the hypergraphs B and BI, and 5 for I.
	\end{rem}
	
	Parts~\ref{thmpart:StableSetsContainingConfigurations-BStable} and~\ref{thmpart:StableSetsContainingConfigurations-BIStable} of
	\cref{thm:StableSetsContainingConfigurations} are proved in
	\cref{prop:unimodularTriangle}, while
	Part~\ref{thmpart:StableSetsContainingConfigurations-IStable}
	follows from \cref{cor:X2IntExtremal}. Note that, in particular, if $S$ contains a unimodular triangle, then $S$ meets all of the ``contains any three points of \ldots'' conditions in \cref{thm:StableSetsContainingConfigurations}.
	
	Our final theorem summarizes our results regarding maximal stable
	sets.  The first two parts of this theorem show that the study of
	stable sets reduces to the study of maximal stable sets.
	
	\begin{thm}\label{thm:MaximalStableSetsAndSubsets}\mbox{}
		\begin{enumerate}[label=(\alph*)]
			\item 
			\label{thm:MaximalStableSetsAndSubsets-BBISubsets}
			If $S$ is a B/BI-stable proper subset of $\Z^{2}$,
			then every subset of $S$ is B/BI-stable (of the same stability type).
			
			\item 
			\label{thm:MaximalStableSetsAndSubsets-ISubsets}
			If $S$ is an I-stable subset of $\Z^{2}$ that is not, up to
			unimodular transformation, either $\Triangle$, $\Square$, or
			$\Z^{2}$, then every subset of $S$ is I-stable.
			
			\item  
			\label{thm:MaximalStableSetsAndSubsets-Maximal}
			Every B/I/BI-stable proper subset of $\Z^{2}$ is contained in a maximal B/I/BI-stable set (of the same stability type).
			
			\item  
			\label{thm:MaximalStableSetsAndSubsets-Nonperiodic}
			There exist non-periodic maximal B/I/BI-stable sets.
		\end{enumerate}
	\end{thm}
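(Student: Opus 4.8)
The plan is to read parts \ref{thm:MaximalStableSetsAndSubsets-BBISubsets} and \ref{thm:MaximalStableSetsAndSubsets-ISubsets} straight off \cref{thm:StableSetsContainingConfigurations}, to get part \ref{thm:MaximalStableSetsAndSubsets-Maximal} from Zorn's lemma, and to prove part \ref{thm:MaximalStableSetsAndSubsets-Nonperiodic} by a cardinality argument. For \ref{thm:MaximalStableSetsAndSubsets-BBISubsets}, note that \cref{thm:StableSetsContainingConfigurations}\ref{thmpart:StableSetsContainingConfigurations-BStable} (resp.\ \ref{thmpart:StableSetsContainingConfigurations-BIStable}), read contrapositively, says that a B-stable (resp.\ BI-stable) \emph{proper} subset $S\subsetneq\Z^2$ contains no three points of any border (resp.\ minimal) triangle; hence every border (resp.\ minimal) triangle meets $S$ in at most two points. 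Any $T\subseteq S$ then meets every such triangle in at most two points, so no border (resp.\ minimal) triangle meets $T$ in exactly three points, which is exactly B-stability (resp.\ BI-stability, since the minimal triangles are the border and internal triangles together). Part \ref{thm:MaximalStableSetsAndSubsets-ISubsets} is identical using \cref{thm:StableSetsContainingConfigurations}\ref{thmpart:StableSetsContainingConfigurations-IStable}: an I-stable proper subset that is not unimodularly $\Triangle$, $\Square$, or $\Z^2$ contains no three points of any internal triangle, and this passes to all subsets.

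For \ref{thm:MaximalStableSetsAndSubsets-Maximal} I would order by inclusion the B/I/BI-stable proper subsets of $\Z^2$ containing the given set $S$ and apply Zorn's lemma; the only content is that the union $U=\bigcup_\alpha T_\alpha$ of a chain is again such a set. Stability is automatic: a triangle witnessing instability has four points, so its (at most three) points in $U$ already lie in a single $T_\alpha$ while the missing fourth point lies outside $U$, hence outside $T_\alpha$, contradicting stability of $T_\alpha$. The delicate point is properness, $U\ne\Z^2$. In the B and BI cases this is immediate from \cref{thm:StableSetsContainingConfigurations}\ref{thmpart:StableSetsContainingConfigurations-BStable},\ref{thmpart:StableSetsContainingConfigurations-BIStable}, since each $T_\alpha$, and therefore $U$, contains no three points of a border (resp.\ minimal) triangle. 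In the I case I would split on whether $S$ is contained in a unimodular copy of $\Square$. If it is, then that copy is already a maximal I-stable set containing $S$ and there is nothing to prove; here I use that $\Square$ is maximal I-stable and that $\Triangle\subseteq\Square$ up to unimodular equivalence, both of which follow from \cref{const:J14_J12} and \cref{cor:X2IntExtremal}. If $S$ is contained in no copy of $\Square$, then no $T_\alpha\supseteq S$ can be unimodularly $\Triangle$, $\Square$, or $\Z^2$, so \cref{thm:StableSetsContainingConfigurations}\ref{thmpart:StableSetsContainingConfigurations-IStable} shows each $T_\alpha$—hence $U$—contains no three points of an internal triangle, giving $U\ne\Z^2$ again.

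For \ref{thm:MaximalStableSetsAndSubsets-Nonperiodic} I interpret \emph{non-periodic} as not invariant under two independent translations, and I would argue by cardinality. A set invariant under a finite-index sublattice $L\subseteq\Z^2$ is determined by its intersection with the finite quotient $\Z^2/L$; as there are countably many such $L$ and finitely many patterns on each quotient, there are only countably many doubly-periodic subsets of $\Z^2$, hence only countably many doubly-periodic maximal stable sets. It therefore suffices to produce uncountably many distinct maximal B/I/BI-stable sets. The plan is to build an explicit family $\{M_\beta\}_{\beta\in\{0,1\}^{\mathbb{N}}}$ of stable sets in which each bit $\beta_k$ is encoded by a local configuration placed near a widely separated site $s_k$, arranged so that (i) the configuration keeps the set stable and (ii) the maximal extension guaranteed by part \ref{thm:MaximalStableSetsAndSubsets-Maximal} is forced, near $s_k$, to reveal $\beta_k$, so that distinct sequences $\beta$ yield distinct maximal sets. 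Taking maximal extensions then gives uncountably many distinct maximal stable sets, so some of them are non-periodic; the same scheme is run for each of the three stability types.

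The hardest step will be the encoding in \ref{thm:MaximalStableSetsAndSubsets-Nonperiodic}, and the difficulty is intrinsic to this model: since the defining triangles may be arbitrarily thin, a single triangle can couple far-apart points, so the sites $s_k$ do not decouple and the bits cannot simply be chosen independently. The gadgets must be designed so that these long-range thin triangles never acquire exactly three occupied points (preserving stability) while each bit remains recoverable from the maximal extension (guaranteeing distinctness). A convenient alternative, avoiding the bookkeeping of independent gadgets, is to exhibit a single explicitly aperiodic set—for instance a Sturmian-type perturbation of one of the periodic extremal sets of \cref{thm:main}—and verify directly that it is stable and maximal; here too the crux is controlling the thin triangles.
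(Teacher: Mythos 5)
Parts \ref{thm:MaximalStableSetsAndSubsets-BBISubsets}--\ref{thm:MaximalStableSetsAndSubsets-Maximal} of your proposal are correct and essentially match the paper: \ref{thm:MaximalStableSetsAndSubsets-BBISubsets} and \ref{thm:MaximalStableSetsAndSubsets-ISubsets} are the contrapositive reading of the structural results (the paper routes them through \cref{prop:unimodularTriangle} and \cref{cor:structure-internal} rather than through the statement of \cref{thm:StableSetsContainingConfigurations}, but the content is identical), and your Zorn's-lemma argument for \ref{thm:MaximalStableSetsAndSubsets-Maximal} is the paper's \cref{prop:zorns}. Your handling of properness of the chain union in the I-stable case---splitting on whether $S$ lies inside a unimodular copy of $\Square$---is a small, valid variant of the paper's argument, which instead observes that if the union were $\Z^2$ then some chain member would contain $[4]^2$ and hence equal $\Z^2$.

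The genuine gap is in part \ref{thm:MaximalStableSetsAndSubsets-Nonperiodic}. The reduction to producing uncountably many distinct maximal stable sets is sound, but the construction that would make it work---gadgets at far-apart sites $s_k$ whose bits both preserve stability and remain recoverable from an \emph{arbitrary} maximal extension---is exactly the step you leave open, and you correctly identify why it is delicate: thin triangles couple distant sites, and two distinct stable sets can well have the same maximal extension. As written, part \ref{thm:MaximalStableSetsAndSubsets-Nonperiodic} is unproven. The paper (\cref{prop:Iaperiodic,prop:aperodic}) sidesteps the encoding problem in two ways worth comparing. For I-stability it starts from two far-out axis rays (I-stable because any triangle on them has large area), extends to a maximal set via part \ref{thm:MaximalStableSetsAndSubsets-Maximal}, and derives a contradiction from periodicity alone: periodicity would force both full axes, hence the unit square $\{(0,0),(1,0),(0,1),(1,1)\}$, hence $\Square$ by \cref{lem:2X2Construction}, yet $\Square$ is maximal I-stable and misses the rays. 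For B/BI-stability it encodes a single aperiodic $X \subset \Z$ (built from the binary expansion of an irrational) along one line, where the key gadget is that $X$ contains no three consecutive integers but $X \cup \{x\}$ does for every $x \notin X$; by \cref{prop:unimodularTriangle} this pins down $S \cap (\Z \times \{0\}) = X$ for \emph{every} maximal extension $S$, which is precisely the recoverability your sketch requires but does not supply. If you want to keep the cardinality framing, this one-dimensional gadget is the missing ingredient.
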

	
	Part~\ref{thm:MaximalStableSetsAndSubsets-BBISubsets} of
	\cref{thm:MaximalStableSetsAndSubsets} is proved in
	\cref{cor:BorderStableSubsets}.
	Part~\ref{thm:MaximalStableSetsAndSubsets-ISubsets} follows
	from \cref{cor:structure-internal}.
	Part~\ref{thm:MaximalStableSetsAndSubsets-Maximal} is proved in
	\cref{prop:zorns}, and
	Part~\ref{thm:MaximalStableSetsAndSubsets-Nonperiodic} is proved
	in
	\cref{prop:Iaperiodic,prop:aperodic}.
	
	This paper is organized as follows.  In
	\cref{sec:BIStableSets,sec:BStableSets,,sec:IStableSets} we
	consider BI-stable, B-stable, and I-stable sets respectively.  In
	\cref{sec:maximal} we study maximal stable sets.  We conclude with
	some open problems in \cref{sec:ConcludingRemarks}.
	
	\section{BI-stable Sets}
	\label{sec:BIStableSets}
	
	In this section we give several constructions of BI-stable sets.
	Here and throughout, we use the standard notations 
	\begin{align*}
		nX 
		&\deftobe
		\setof{nx \st x \in X},
		&
		X + Y
		&\deftobe
		\setof{x + y \st \text{$x \in X$ and $y \in Y$}},
		&
		X + z
		&\deftobe
		X + \setof{z},
	\end{align*}
	for integers $n \in \Z$, subsets $X, Y \subset \Z^2$, and points $z
	\in \Z^{2}$.  We will also write $[n]$ for the interval $[1, n] = \setof{1, \dotsc, n}$ of integers.
	
	Our first construction is the set $n\Z^2$; see \cref{fig:stable14} for the case $n=2$.
	
	\begin{figure}[h!]
		\centering
		\begin{tikzpicture}[scale=0.36]
			\draw[-latex,line width=2pt] (-0.5,6) -- (16,6); 
			\draw[-latex,line width=2pt] (6,-0.5) -- (6,16); 
			\clip (-0.5,-0.5) rectangle (15.5,15.5);
			\draw (-1,-1)grid(16,16);
			\begin{scope}[xshift=0cm,yshift=0cm]
				\foreach \x in {0,...,8}{
					\foreach \y in {0,...,8}{
						\draw (2*\x,2*\y)node[vtx]{};
					}
				}
			\end{scope}
		\end{tikzpicture}
		\caption{The blue points denote $2\Z^2$, a BI-stable set of density $1/4$.}
		\label{fig:stable14}
	\end{figure}
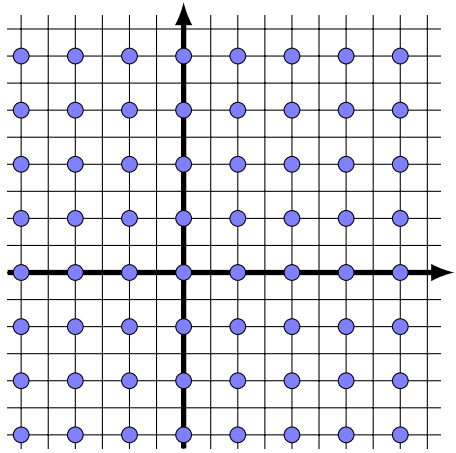
	
	\begin{prop}\label{prop:nZConstruction}
		Let $n \ge 2$. Every minimal triangle intersects $n \Z^2$ in at most $2$ points.  In particular, every subset of $n\Z^2$ is BI-stable.
	\end{prop}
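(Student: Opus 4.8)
The plan is to reduce to the core statement---that no minimal triangle contains three points of $n\Z^2$---and argue by contradiction. Since a minimal triangle $T = \Delta \cap \Z^2$ has exactly four lattice points, having three points in $n\Z^2$ just means three of the four points, say $p_1,p_2,p_3$, lie in $n\Z^2$; write $p_i = n q_i$ with $q_i \in \Z^2$. I would then split on whether $p_1,p_2,p_3$ are collinear, handling the non-collinear case by an area estimate and the collinear case by a lattice-point-counting argument.

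In the non-collinear case, $\conv\{p_1,p_2,p_3\}$ is a nondegenerate lattice triangle whose area equals $n^2$ times the area of $\conv\{q_1,q_2,q_3\}$; the latter is a nondegenerate lattice triangle, so its area is at least $1/2$. Hence $\conv\{p_1,p_2,p_3\}$ has area at least $n^2/2 \ge 2$, using $n \ge 2$. But $\conv\{p_1,p_2,p_3\} \subseteq \Delta$, and the preceding discussion records (via Pick's formula) that every minimal triangle has area $1$ or $3/2$, hence at most $3/2 < 2$. This contradiction rules out the non-collinear case.

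For the collinear case I would exploit that points of $n\Z^2$ are spread out along any line. Order the three points so that $p_2$ lies between $p_1$ and $p_3$. Since $p_2 - p_1 = n(q_2 - q_1)$, the point $m \deftobe p_1 + (q_2 - q_1) = p_1 + \tfrac1n(p_2 - p_1)$ is a lattice point strictly between $p_1$ and $p_2$ (because $0 < \tfrac1n < 1$). Then $p_1, m, p_2, p_3$ are four distinct collinear lattice points of $\Delta$, hence all four lie in $T = \Delta \cap \Z^2$. But $T$ has only four points and its three vertices are noncollinear, so $T$ cannot contain four collinear points---otherwise those four would be all of $T$, forcing the vertices to be collinear. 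This contradiction finishes the collinear case, establishing that every minimal triangle meets $n\Z^2$ in at most two points.

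The ``in particular'' then follows immediately: if $S \subseteq n\Z^2$, every minimal triangle contains at most two points of $S$, so it never contains exactly three points of $S$; thus no border or internal triangle witnesses instability, and $S$ is BI-stable. The only real subtlety is the collinear case, and the key point to get right there is that membership in $n\Z^2$ (not mere membership in $\Z^2$) is what forces the intermediate lattice point $m$, producing a fourth collinear point and contradicting minimality. The non-collinear case, by contrast, is a one-line area computation once the area bound on minimal triangles from the preceding discussion is invoked.
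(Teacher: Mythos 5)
Your proof is correct, but it takes a different route from the paper's. The paper avoids any case split by counting lattice points directly: the segment joining any two points of $n\Z^2$ already contains at least $n+1 \ge 3$ lattice points, so a triangle containing three points of $n\Z^2$ contains at least $2(n+1)-1 = 2n+1 > 4$ lattice points (whether the three points are collinear or not), and hence cannot be minimal. Your collinear case is essentially a special instance of that counting argument, while your non-collinear case substitutes an area estimate: the hull of three non-collinear points of $n\Z^2$ has area at least $n^2/2 \ge 2$, exceeding the bound of $3/2$ that Pick's formula gives for minimal triangles. Both arguments are sound and elementary; the paper's is more uniform and shorter, whereas yours makes the two distinct obstructions (too much area versus too many boundary points) explicit, which arguably better explains \emph{why} the statement holds. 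Your handling of the ``in particular'' clause matches the paper's intent exactly.
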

	\begin{proof}
		The line segment between any two points of $n\Z^2$ contains at least $n+1 \ge 3$ lattice points including the endpoints, so any triangle containing at least three points of $n\Z^2$ will contain at least $2n+1>4$  lattice points and thus cannot be a border or internal triangle.
	\end{proof}
	
	With this lemma we can prove the existence statement of
	\cref{thm:main}\ref{border} for BI-stable sets with small density.
	
	\begin{cor}\label{cor:completely-stable-densities}
		For every $\delta\le 1/4$, there exists a set $S\subseteq 2\Z^2$ with $\del(S)=\delta$ that is BI-stable (and hence also B-stable and I-stable).
	\end{cor}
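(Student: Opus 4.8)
The plan is to exploit \cref{prop:nZConstruction}, which already guarantees that \emph{every} subset of $2\Z^2$ is BI-stable; thus the stability conclusion is automatic, and the entire content of the statement is the realizability of each prescribed density. Since densities are nonnegative I may assume $0 \le \delta \le 1/4$, and since $2\Z^2$ has density $1/4$, it suffices to carve out of $2\Z^2$ a subset of upper density exactly $\delta$. I would do this by a \emph{union-of-columns} construction, which reduces the two-dimensional density problem to a one-dimensional one, at the cost of a bookkeeping factor of $1/4$ coming from the rescaling by $2$.

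Concretely, set $\alpha \deftobe 4\delta \in [0,1]$ and first build a set $A \subseteq \Z$ whose one-dimensional upper density is exactly $\alpha$. A convenient choice is the symmetrized floor sequence
\begin{equation*}
	B \deftobe \setof{m \ge 1 \st \floor{m\alpha} > \floor{(m-1)\alpha}}, \qquad A \deftobe B \cup (-B),
\end{equation*}
for which $\card{A \cap [-m,m]} = 2\floor{m\alpha}$ for every $m \ge 0$. I then define $S \deftobe \setof{(2a, 2b) \st a \in A,\ b \in \Z} \subseteq 2\Z^2$, the union of the ``even'' columns indexed by $A$. By \cref{prop:nZConstruction} this $S$ is BI-stable, so only the density computation remains.

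For the density, writing $m = \floor{n/2}$, a point $(2a,2b)$ lies in $[-n,n]^2$ exactly when $a \in A \cap [-m,m]$ and $b \in [-m,m]$, so
\begin{equation*}
	\frac{\card{S \cap [-n,n]^2}}{\card{[-n,n]^2}} = \frac{2\floor{m\alpha}\,(2m+1)}{(2n+1)^2} \xrightarrow{\ n\to\infty\ } \frac{\alpha}{4} = \delta,
\end{equation*}
using $\floor{m\alpha} = m\alpha + O(1)$ together with $n = 2m + O(1)$. The only point requiring any care is that $\del$ is a $\limsup$: but the floor-sequence construction makes the displayed ratio converge to a genuine limit, so the $\limsup$ equals $\alpha/4 = \delta$ and no oscillation issue arises. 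In short, I expect no real obstacle here; the one thing to get right is the factor of $1/4$ relating the density of $S$ inside $2\Z^2$ to the one-dimensional density of $A$.
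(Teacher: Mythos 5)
Your proof is correct, but it realizes the density by a genuinely different construction than the paper. The paper's proof is a greedy nested-union argument: it builds an increasing chain of finite sets $X^{(n)} \subseteq 2\Z^2 \cap [-n,n]^2$ whose cardinalities track $\floor{\delta(2n+1)^2}$ and takes $S = \bigcup_n X^{(n)}$, so the set is non-explicit and the density claim is read off from the prescribed counts along the windows. You instead produce an explicit set --- the union of the columns of $2\Z^2$ indexed by a Beatty-type subset $A \subseteq \Z$ of one-dimensional density $4\delta$ --- and verify the telescoping identity $\card{A \cap [-m,m]} = 2\floor{m\alpha}$ (valid since $\alpha = 4\delta \le 1$ makes each increment $0$ or $1$), from which the two-dimensional ratio converges to a genuine limit $\alpha/4 = \delta$ rather than merely having the right $\limsup$. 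Both arguments delegate the stability entirely to \cref{prop:nZConstruction}, so the only content in either case is the density bookkeeping; your version buys explicitness and an honest limit, while the paper's version is shorter and makes no commitment to the shape of the set. The one cosmetic point is that the statement as written quantifies over all $\delta \le 1/4$, and you correctly note that only $\delta \in [0,1/4]$ is meaningful since upper densities are nonnegative --- the paper's proof makes the same tacit restriction.
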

	\begin{proof}
		Let $X^{(0)} \deftobe \emptyset$, and iteratively define $X^{(n)}\sub 2\Z^2 \cap [-n,n]^2$ to be any set containing $X^{(n-1)}$ and which has exactly $\min\setof{\floor{\delta (2n+1)^2}, \card{2\Z^2 \cap [-n,n]^2}}$ points of $2\Z^2$.  Let $S \deftobe \bigcup_{i \ge 0} X^{(i)}$.  By construction we have $\del(S)=\delta$. Since $S$ is a subset of $2\Z^2$, $S$ is BI-stable by \cref{prop:nZConstruction}. 
	\end{proof}
	
	We know of no other maximal BI-stable sets with upper density
	$1/4$ besides $2\Z^2$ and its unimodular transforms; see
	\cref{quest:borderUnique} for more on this.  In fact,
	\cref{thm:stab} shows that any BI-stable set that is ``far'' from
	$2\Z^2$ (in a certain precise sense) has density strictly smaller
	than~$1/4$.  The following gives an example of a BI-stable set
	that is ``far'' from $2\Z^2$ and which still has a relatively
	large density of $2/9$.  See \cref{fig:2/9} for an illustration.
	
	\begin{const}\label{const:2/9}
		Let $S_{2/9}^* \deftobe \{(0,0), (1,0)\}$ and $S_{2/9} = S_{2/9}^*
		+ 3\Z^2$.
	\end{const}
	
	\begin{figure}[h!]
		\centering
		\begin{tikzpicture}[scale=0.36]
			\draw[-latex,line width=2pt] (-0.5,6) -- (15,6); 
			\draw[-latex,line width=2pt] (6,-0.5) -- (6,15); 
			\clip (-0.5,-0.5) rectangle (14.5,14.5);
			\draw (-1,-1)grid(16,16);
			\begin{scope}[xshift=0cm,yshift=0cm]
				\foreach \x in {0,...,4}{
					\foreach \y in {0,...,4}{
						\draw (3*\x,3*\y)node[vtx]{};
						\draw (3*\x+1,3*\y)node[vtx]{};
					}
				}
			\end{scope}
		\end{tikzpicture}
		\caption{The blue points are from $S_{2/9}$, a BI-stable set of density $2/9$ defined in \cref{const:2/9}.}
		\label{fig:2/9}
	\end{figure}
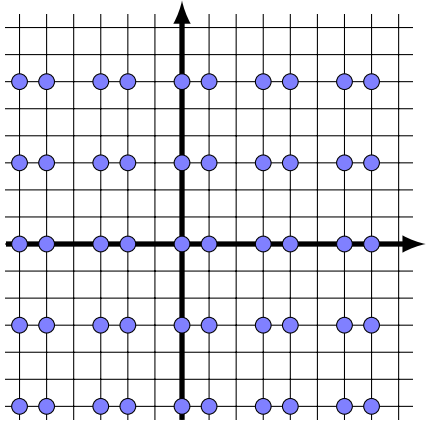
	
	\begin{prop} \label{prop:S_2/9}
		The set $S_{2/9}$ is BI-stable with $\del(S_{2/9})=2/9$.
	\end{prop}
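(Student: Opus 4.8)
The density is the easy half, and I would dispatch it first. Since $S_{2/9} = S_{2/9}^* + 3\Z^2$ is invariant under translation by $3\Z^2$, and each translate of the fundamental domain $\{0,1,2\}^2$ meets $S_{2/9}$ in exactly the two points of the corresponding translate of $S_{2/9}^*$, the count $\card{S_{2/9} \cap [-n,n]^2}$ equals $\tfrac{2}{9}(2n+1)^2 + O(n)$. Hence the limit (not merely the limsup) defining $\del(S_{2/9})$ exists and equals $2/9$. The real content is BI-stability, and the plan is to prove the stronger statement — in the spirit of \cref{prop:nZConstruction} — that every minimal triangle contains \emph{at most two} points of $S_{2/9}$, which immediately rules out the ``exactly three'' configurations defining B- and I-instability.

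The one structural feature I would exploit is that $S_{2/9}$ occupies only two residue classes modulo $3\Z^2$, namely those of $(0,0)$ and $(1,0)$. So suppose for contradiction that a minimal triangle $T$ (a triangle with exactly four lattice points, by the Pick's-formula characterization recalled in \cref{subsec:DefinitionsAndResults}) contains three points of $S_{2/9}$. Because these three points fall into only two classes mod $3\Z^2$, pigeonhole gives two of them, say $A$ and $B$, with $A \equiv B \pmod{3\Z^2}$; that is, $A - B = (3j,3k)$ with $(j,k) \neq (0,0)$. The number of lattice points strictly between $A$ and $B$ is $\gcd(\card{3j},\card{3k}) - 1 = 3\gcd(\card{j},\card{k}) - 1 \ge 2$. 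Since $A,B \in T$ and $T$ is convex, these interior points lie in $T$ as well, so $T$ contains at least four collinear lattice points on the line through $A$ and $B$. But a nondegenerate triangle has a vertex off that line — a fifth lattice point — contradicting the fact that a minimal triangle has exactly four. Thus no minimal triangle contains three points of $S_{2/9}$, so $S_{2/9}$ is both B-stable and I-stable.

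I do not expect a serious obstacle here; the argument is short and self-contained once the residue observation is made. The only points requiring care are bookkeeping ones: invoking the Pick's-formula fact that ``minimal'' is equivalent to ``exactly four lattice points,'' and applying the interior-lattice-point count $\gcd - 1$ correctly in the degenerate cases where $j = 0$ or $k = 0$ (a vertical or horizontal segment). In every case $\gcd(\card{3j},\card{3k}) \ge 3$ because $(j,k) \neq (0,0)$, so the bound of at least two interior points is robust, and the collinearity contradiction goes through uniformly without any case split on whether the three $S_{2/9}$-points are vertices or the fourth point of $T$.
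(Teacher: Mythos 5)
Your proof is correct, and it takes a genuinely more unified route than the paper's. The paper treats B-stability and I-stability separately: for B-stability it computes that any triangle with all three vertices in $S_{2/9}$ has area a multiple of $3/2$ (the second coordinates of points of $S_{2/9}$ all lie in $3\Z$, so the relevant determinant is divisible by $3$), whence $S_{2/9}$ contains no unimodular triangle and no vertex set of a border triangle; for I-stability it uses exactly your pigeonhole observation---two of any three points of $S_{2/9}$ agree modulo $3\Z^2$, so the segment joining them carries at least two further lattice points---to exclude the vertices of an internal triangle. You instead push the pigeonhole/collinearity argument all the way: a minimal triangle containing three points of $S_{2/9}$ would contain at least four collinear lattice points together with a vertex off that line, hence at least five lattice points, contradicting minimality. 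This single argument subsumes both of the paper's cases, in the spirit of \cref{prop:nZConstruction}, requires no determinant computation, and also disposes cleanly of the one three-point configuration of a border triangle that the paper's area argument does not directly address (the collinear triple consisting of two vertices and the boundary midpoint). What the paper's computation buys in exchange is the mildly stronger structural fact that every triangle with vertices in $S_{2/9}$ has area divisible by $3/2$, but that is not needed for the proposition. Your density calculation agrees with the paper's.
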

	\begin{proof}
		First, recall that the area of a triangle with vertices $(a_1,a_2)$, $(b_1,b_2)$, and $(c_1,c_2)$ is
		$$\frac{1}{2}\det
		\begin{bmatrix}
			b_1-a_1 & b_2-a_2 \\
			c_1-a_1 & c_2-a_2
		\end{bmatrix}.$$
		When these vertices are in $S_{2/9}$, the second column of this matrix is a multiple of 3, and thus the area of any such triangle is a multiple of $3/2$.
		
		Thus $S_{2/9}$ contains neither any unimodular triangle nor the vertices of any border triangle. Therefore, it intersects any border triangle in at most $2$ points and so is B-stable.
		
		Moreover, by the pigeonhole principle, among any three points in $S_{2/9}$ there must be at least 
		two points that are in the same equivalence class modulo $3\Z^2$. These two points have at least two additional lattice points on the line segment joining them. Therefore, $S_{2/9}$ does not contain the vertices of any internal triangle. As noted before, $S_{2/9}$ also does not contain a unimodular triangle, and thus $S_{2/9}$ intersects any internal triangle in at most $2$ points. 
		Therefore, $S_{2/9}$ is also I-stable.
	\end{proof}
	
	The proofs in this section showed that the relevant sets $S$ are
	BI-stable by using a stronger property, namely that $S$ contains
	at most two points of any minimal triangle.  This is not a
	coincidence; as we shall soon see
	(\cref{prop:unimodularTriangle}\ref{lempart:BIstableIFF} below),
	this property is equivalent to BI-stability for proper subsets of
	the integer lattice.

	\section{B-stable Sets}
	\label{sec:BStableSets}
	
	In this section we establish our bounds on B-stable sets, namely that $\del(S)\le 1/4$ whenever $S\subsetneq \Z^2$ is a B-stable set. Our most important structural result towards this end is the following.
	\begin{prop}\label{prop:unimodularTriangle}
		Let $S\subsetneq \Z^2$.  Then $S$ is:
		\begin{enumerate}[label=(\alph*)]
			\item B-stable if and only if $S$ contains at most $2$ points from any border triangle, and
			\item \label{lempart:BIstableIFF} BI-stable if and only if $S$ contains at most $2$ points from any minimal triangle. 
		\end{enumerate}
	\end{prop}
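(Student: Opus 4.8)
The plan is to dispatch the two ``if'' directions immediately and to reduce both ``only if'' directions to a single propagation lemma about consecutive collinear points. The ``if'' directions are routine: if $S$ meets every border (respectively minimal) triangle in at most two points, then no border (respectively minimal) triangle can have exactly three of its four points in $S$, which is precisely B-stability (respectively BI-stability, since a minimal triangle is either border or internal). Note that properness of $S$ is essential for the converses: $\Z^2$ itself is BI-stable yet meets every minimal triangle in all four points, so stability alone cannot force the ``at most two'' conclusion.

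The engine for the ``only if'' directions is the claim that \emph{if $S$ is B-stable and contains three consecutive collinear lattice points, then $S=\Z^2$}. After a unimodular transformation (which preserves B-stability and the class of border triangles) we may assume these points are $(0,0),(1,0),(2,0)$. The key observation is that for \emph{every} integer $a$, the triangle with vertices $(0,0)$, $(2,0)$, $(a,1)$ has area $1$ and, by Pick's formula, contains exactly the four points $(0,0),(1,0),(2,0),(a,1)$; that is, it is a border triangle whose three collinear points already lie in $S$. B-stability then forbids exactly three of these four points from lying in $S$, so $(a,1)\in S$. As $a$ is arbitrary, the entire row $y=1$ lies in $S$, and symmetrically so does the row $y=-1$. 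Each of these rows again contains three consecutive collinear points, so iterating upward and downward places every row in $S$, giving $S=\Z^2$. The only mild point to verify is that the triangles $(0,0),(2,0),(a,1)$ really are border triangles for all $a$, which is a direct area and boundary computation.

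With this lemma, part (a) follows quickly. Suppose $S\subsetneq\Z^2$ is B-stable and, for contradiction, some border triangle $T$ has $\card{T\cap S}\ge 3$. If $\card{T\cap S}=3$ this directly violates B-stability, so $\card{T\cap S}=4$, i.e.\ $T\subseteq S$. Every border triangle has four boundary points and no interior point, so the extra boundary point lies on one edge, giving that edge three consecutive collinear lattice points; these lie in $S$, so the lemma forces $S=\Z^2$, contradicting properness. This argument also establishes the B-stable case of \cref{thm:StableSetsContainingConfigurations}, since under B-stability ``containing three points of a border triangle'' forces containing the fourth as well.

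For part (b), the border direction is inherited from (a), so only internal triangles require attention, and this is where the real work lies. The first step is a structural normalization: if $T=\{v_1,v_2,v_3,c\}$ is an internal triangle with interior point $c$, then each sub-triangle $c v_i v_j$ contains only its own three vertices among the four lattice points of $T$, hence is unimodular of area $\tfrac12$; equal areas force $c$ to be the centroid, so $c=\tfrac13(v_1+v_2+v_3)$ and the vectors $v_i-c$ pairwise form a basis of $\Z^2$. Mapping such a basis to the standard one shows that every internal triangle is, up to a unimodular transformation, equal to $\{(0,0),(1,0),(0,1),(-1,-1)\}$ with interior point $(0,0)$. Now if $S\subsetneq\Z^2$ is BI-stable and some internal triangle meets $S$ in at least three points, then I-stability rules out exactly three, so all four lie in $S$, and after normalizing we may assume $(0,0),(1,0),(0,1)\in S$. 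The border triangle $\{(-1,0),(0,0),(1,0),(0,1)\}$ (base $(-1,0),(0,0),(1,0)$, apex $(0,1)$) then has three of its points in $S$, so B-stability forces $(-1,0)\in S$, producing the consecutive collinear triple $(-1,0),(0,0),(1,0)$; the propagation lemma yields $S=\Z^2$, the desired contradiction. The main obstacle is exactly this internal case: unlike a border triangle, an internal triangle contains \emph{no} three collinear points, so the lemma cannot be applied directly, and the structural normalization (the centroid and unimodular-basis description) is precisely what bridges this gap by letting a single application of B-stability manufacture a collinear triple.
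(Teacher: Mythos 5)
Your proof is correct and follows essentially the same route as the paper's: normalize by a unimodular transformation, propagate row by row using the family of border triangles $\{(0,0),(1,0),(2,0),(t,1)\}$, and reduce the internal case to the border case via the unimodular sub-triangle $\{(0,0),(1,0),(0,1)\}$. The only cosmetic differences are that you package the propagation as a standalone lemma about three consecutive collinear points, and in part (b) you run one extra B-stability step to manufacture a collinear triple where the paper simply cites the conclusion of part (a) for an immediate contradiction.
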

	\begin{proof}
		We begin with part (a). Evidently, if $S$ contains at most $2$ points from any border triangle, then by definition $S$ is B-stable.  Conversely, assume that $S$ is B-stable but contains at least $3$ points from a border triangle,  which implies $S$ contains all $4$ points of this border triangle. Applying a unimodular transformation to $S$, we may assume without loss of generality that this triangle is $\{(0,0),(1,0),(2,0),(0,1)\}$. Observe that, for each $t\in\Z$, each of the border triangles $\{(0,0),(1,0),(2,0),(t, 1)\}$ intersects $S$ in at least three points, and so the B-stability of $S$ implies that $S$ contains every point of the form $(t, 1)$.  Thus, $S$ intersects each of the triangles $\{(0,1),(1, 1),(2, 1),(t, 2)\}$ in at least three points, and hence contains every point of the form $(t,2)$. Iterating this procedure, both up and down $\Z^2$, we conclude that $S = \Z^2$, completing the proof of part (a).
		
		We will show that part (b) follows from part (a). As before, containing at most two points from any minimal triangle immediately implies BI-stability. Conversely, if $S \subsetneq \Z^2$ is BI-stable, then $S$ contains at most $2$ points from any border triangle by part (a); if $S$ contains at least $3$ points from an internal triangle, then $S$ contains that internal triangle. Applying a unimodular transformation, we may assume without loss of generality that this triangle has points $\{(0,0),(1,0),(-1,-1),(0,1)\}$. But then this triangle contains $3$ points $(0,0), (1,0), (0,1)$ from a border triangle, a contradiction.
	\end{proof}
	
	\begin{cor}
		\label{cor:BorderStableSubsets}
		If $S \subsetneq \Z^2$ is a B-stable set, then any subset of $S$ is also B-stable. Similarly, if $S \subsetneq \Z^2$ is a BI-stable set, then any subset of $S$ is also BI-stable.
	\end{cor}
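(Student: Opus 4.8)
The plan is to deduce this immediately from \cref{prop:unimodularTriangle}. The essential observation is that the \emph{definitions} of B-stability and BI-stability are phrased in terms of a triangle meeting $S$ in \emph{exactly} three points, and this condition is not obviously monotone under passing to subsets: deleting a point of $S$ could, a priori, turn a triangle that met $S$ in all four of its points into one that now meets the subset in exactly three. So the naive approach of arguing directly from the definition does not obviously work. The point of invoking the proposition is that it recharacterizes stability, \emph{for proper subsets of $\Z^2$}, by the condition of meeting every relevant triangle in \emph{at most} two points, and this latter condition is manifestly preserved under taking subsets.

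First I would record the trivial but necessary remark that if $S \subsetneq \Z^2$, then every $S' \sub S$ also satisfies $S' \subsetneq \Z^2$, so that both directions of \cref{prop:unimodularTriangle} are available for $S'$ as well as for $S$. This is what lets us apply the reformulation to the subset.

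For the B-stable case I would argue as follows. Suppose $S \subsetneq \Z^2$ is B-stable. By \cref{prop:unimodularTriangle}(a), $S$ meets every border triangle in at most two points. Now let $S' \sub S$. For any border triangle $T$ we have $S' \cap T \sub S \cap T$, so $\card{S' \cap T} \le \card{S \cap T} \le 2$; that is, $S'$ also meets every border triangle in at most two points. Since $S' \subsetneq \Z^2$, \cref{prop:unimodularTriangle}(a) applies in the reverse direction to conclude that $S'$ is B-stable. The BI-stable case is identical, replacing ``border triangle'' by ``minimal triangle'' throughout and invoking \cref{prop:unimodularTriangle}\ref{lempart:BIstableIFF} in place of part~(a).

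There is, honestly, no substantive obstacle here: all of the real work has already been carried out in \cref{prop:unimodularTriangle}, whose content is precisely the nontrivial equivalence between the global ``exactly three'' stability condition and the local, monotone ``at most two'' condition. Given that equivalence, the corollary reduces to the one-line observation that $\card{S' \cap T}$ is monotone in $S'$. The only point requiring any care is the bookkeeping that the subset $S'$ remains a \emph{proper} subset of $\Z^2$, which is automatic, so that the proposition's hypotheses are met at both applications.
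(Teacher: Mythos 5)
Your proposal is correct and is exactly the argument the paper intends: the paper derives this corollary "immediately" from \cref{prop:unimodularTriangle} by the same monotonicity of the ``at most two points'' condition under taking subsets. You have merely spelled out the details (including the harmless check that a subset of a proper subset is proper), so there is nothing to add.
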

	\Cref{cor:BorderStableSubsets} follows immediately from \cref{prop:unimodularTriangle} and yields a large supply of examples.  We note that the situation for I-stable sets is somewhat more complicated (see \cref{cor:X2IntExtremal}).
	
	With \cref{prop:unimodularTriangle} we can also quickly derive the following.
	
	\begin{lem}
		\label{lem:weak-border}
		If $S \subsetneq \Z^2$ is B-stable, then $\card{S \cap ([3] \times [2])} \le 2$.
	\end{lem}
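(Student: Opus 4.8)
The plan is to combine the local characterization of B-stability in \cref{prop:unimodularTriangle} with one finite geometric fact about the six points of $[3]\times[2]$. By the first part of \cref{prop:unimodularTriangle}, a proper B-stable set meets every border triangle in at most two of its four points. Hence it suffices to prove the purely combinatorial claim that \emph{every three-element subset of $[3]\times[2]$ is contained in the four-point set of some border triangle.} Granting the claim, a set $S\subsetneq\Z^2$ with $\card{S\cap([3]\times[2])}\ge 3$ would contain three points of a border triangle, contradicting B-stability; so $\card{S\cap([3]\times[2])}\le 2$.

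To prove the claim I would first bound areas. All six points lie in the axis-aligned $2\times 1$ box $[1,3]\times[1,2]$, so any lattice triangle with vertices among them has area at most $1$; since a nondegenerate lattice triangle has area a positive multiple of $1/2$, each noncollinear triple $\{P,Q,R\}$ spans a triangle of area exactly $1/2$ or $1$. I then split into three cases. If the triple spans area $1$, then by Pick's formula it is itself a border triangle (area-$1$ triangles have exactly four lattice points, all on the boundary), and $P,Q,R$ are three of these four points. If the triple is collinear, then checking the lines through $[3]\times[2]$ shows it must be one of the two rows $\{(1,1),(2,1),(3,1)\}$ or $\{(1,2),(2,2),(3,2)\}$, i.e.\ three equally spaced consecutive lattice points; adjoining a nearby apex such as $(2,2)$ or $(2,1)$ produces an area-$1$ (hence border) triangle whose four points contain the given row.

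The remaining case—area $1/2$—is the one requiring the key idea, and I expect it to be the main obstacle, since such a triple is a unimodular triangle and is \emph{not} itself a border triangle. The point is that a unimodular triangle has all three edges primitive (an interior edge point would force a fourth lattice point and hence area at least $1$), so it can be ``doubled'' along an edge into a border triangle retaining all three points. Concretely, I extend the edge $QR$ beyond $R$ to $X\deftobe 2R-Q$; then $X-Q=2(R-Q)$ is twice a primitive vector, $R$ is the midpoint of $QX$, and the triangle with vertices $P,Q,X$ has area $|(Q-P)\times(R-P)|=2\cdot\tfrac12=1$. Thus $\{P,Q,R,X\}$ is a border triangle (its unique non-vertex lattice point being $R$) containing the original three points.

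This completes the argument modulo the routine verifications in the area bound and the collinear case. If one preferred a direct case check of all $\binom{6}{3}=20$ triples instead of the area-based trichotomy, I would cut the work down using the two reflection symmetries of the box, namely the unimodular maps $(x,y)\mapsto(4-x,y)$ and $(x,y)\mapsto(x,3-y)$, which preserve $[3]\times[2]$ and the class of border triangles and reduce to only a handful of representative triples.
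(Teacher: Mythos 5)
Your proof is correct and takes essentially the same route as the paper's: both reduce, via \cref{prop:unimodularTriangle}, to the finite fact that any three points of $S$ inside $[3]\times[2]$ lie among the four points of a common border triangle. The paper organizes this check by a pigeonhole on the two rows and leaves the key verification implicit, whereas your area trichotomy (collinear row, area $1$, or unimodular triangle doubled along an edge to $X=2R-Q$) spells out exactly the edge-doubling step that the paper's final sentence glosses over; both arguments are sound.
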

	\begin{proof}
		Let $S \subsetneq \Z^2$ be B-stable. If $S$ contains at least 
		three points of $[3] \times [2]$, then without loss of 
		generality we can assume it contains at least two points from the 
		set $X \deftobe \{(1,1),(2,1),(3,1)\}$. If $S$ contains $X$, then 
		$S$ contains three points of a border triangle, contradicting \cref{prop:unimodularTriangle}. Otherwise, $S$ contains two points of $X$ and some point of the form $(x,2)$, and so again $X$ contains three points of a border triangle.
	\end{proof}
	From this result, it follows that $\del(S) \le 1/3$ for any B-stable set $S \subsetneq \Z^2$.  To see this, note first that any translate of $S$ is also B-stable, and therefore $S$ contains at most two points of any translate of $[3]\times [2]$. Now cover $[-n,n]^2$ by $\lceil\frac{2n+1}{3}\rceil^2$ translates of $[3]\times[2]$. By the previous lemma we know that $S$ contains at most a third of the points from each of these translates, giving the bound. 
	
	We prove the optimal bound $\del(S) \le 1/4$ similarly, using the following more subtle replacement of \cref{lem:weak-border}.
	\begin{lem}\label{lem:6x6}
		If $S \subsetneq \Z^2$ is B-stable, then $\card{S \cap [6]^2} \le 9$. 
	\end{lem}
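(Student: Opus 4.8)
The plan is to translate B-stability into a purely local condition on \emph{triples} of points and then bound how many such points can fit in $[6]^2$. By the first part of \cref{prop:unimodularTriangle}, a proper B-stable $S$ contains at most two points of every border triangle; equivalently, no three points of $S$ lie in a common border triangle. Writing $T \deftobe S \cap [6]^2$, the first step is to characterize which triples lie in a common border (i.e.\ area-$1$) triangle. I claim that three distinct lattice points do so if and only if they either \textbf{(F1)} span a triangle of area $\tfrac12$, or \textbf{(F2)} span a triangle of area $1$, or \textbf{(F3)} are three consecutive collinear points (a $3$-term progression with primitive common difference). The forward direction is just the enumeration of the four $3$-subsets of the vertex/edge-point set $\{V_1,V_2,V_3,M\}$ of a border triangle: $\{V_1,V_2,V_3\}$ gives (F2), $\{V_1,V_2,M\}$ gives (F3), and $\{V_i,V_3,M\}$ gives area $\tfrac12$, i.e.\ (F1). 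The converse is the crucial (and slightly surprising) point: every area-$1$ triangle is already a border triangle by Pick's formula, and any unimodular triangle $P_1P_2P_3$ of area $\tfrac12$ embeds as three of the four points of the border triangle with vertices $P_1,\,2P_2-P_1,\,P_3$ and edge-midpoint $P_2$. This gives an exact characterization, so it suffices to show that any $T \subseteq [6]^2$ avoiding (F1)--(F3) has $\card{T}\le 9$.

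The bound is attained by a single parity class, e.g.\ $2\Z^2\cap[6]^2 = \{2,4,6\}^2$: having all pairwise differences even, it spans only triangles of area $\ge 2$ and contains no primitive progression, so it is valid with exactly nine points. The engine for the upper bound is a family of propagation rules read off from (F1)--(F2). If a row of $T$ contains two points at horizontal distance $1$, then together with any point of an adjacent row they span area $\tfrac12$, so (F1) forces both neighbouring rows to be empty; two points at distance $2$ span area $1$ with any adjacent-row point, and a distance-$1$ pair spans area $1$ with any point two rows away, so (F2) empties those rows as well. The coordinate-swap symmetry gives the same statements for columns. Thus a row containing two points within distance $2$ isolates itself between empty rows, and a row with an adjacent pair is flanked by two empty rows on each side. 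I would then finish by a finite case analysis on the occupancy vector $(n_1,\dots,n_6)$, $n_i \deftobe \card{T \cap ([6]\times\{i\})}$, combining these rules with the symmetric column rules and the no-three-consecutive constraint inside each line, and checking that every surviving pattern sums to at most $9$.

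The main obstacle is that no local box-counting argument can reach $9$. The constraints ``at most two points per $3\times 2$ box'' from \cref{lem:weak-border} are genuinely too weak: the uniform fractional assignment $y_p \deftobe \tfrac13$ respects every $3\times 2$ and $2\times 3$ box constraint (each such box carries mass $2$) yet has total mass $12$, so those constraints alone certify nothing better than $\card{T}\le 12$. The improvement from $12$ to $9$ must therefore exploit genuinely long-range, \emph{thin} triangles — in particular area-$\tfrac12$ and area-$1$ triangles whose three vertices lie in three distinct rows or columns (and the analogous diagonal triangles), which the row/column propagation rules above do not detect. Controlling these cross-row triangles is exactly where the real difficulty lies.

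To keep the resulting case analysis finite and organized, I would stratify $T$ by its four parity classes $EE,EO,OE,OO$, each a translate of a $3\times 3$ subgrid that is internally unconstrained (all within-class triples span area $\ge 2$ and avoid (F3)), so that every forbidden configuration is an interaction \emph{between} classes; the surviving cross-class interactions are few and can be enumerated directly. Since the statement concerns the single fixed set $[6]^2$, this last step is a finite verification, and it is the form in which the computation is most naturally carried out (and can be confirmed by the search routines in the supplementary files).
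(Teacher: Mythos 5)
Your reduction is sound: by \cref{prop:unimodularTriangle}, a proper B-stable $S$ meets every border triangle in at most two points, and your characterization of the triples lying in a common border triangle --- area $\tfrac12$, area $1$, or three consecutive collinear points --- is correct in both directions (the embedding of a unimodular triangle $P_1P_2P_3$ into the border triangle with vertices $P_1$, $2P_2-P_1$, $P_3$ and edge point $P_2$ is exactly right, and Pick's formula shows every area-$1$ triangle is a border triangle). Your local propagation rules (a distance-$1$ or distance-$2$ pair in a row empties the adjacent rows, a distance-$1$ pair also empties the rows two away, and symmetrically for columns) are also correct, and they are precisely the observations with which the paper's computer-free argument begins. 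Your remark that box-covering via \cref{lem:weak-border} cannot beat $12$ is accurate and correctly locates the difficulty in the long, thin cross-row triangles.

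However, there is a genuine gap: everything after that point is a plan, not a proof. The entire content of the lemma is the finite case analysis you defer --- ``a finite case analysis on the occupancy vector\dots checking that every surviving pattern sums to at most $9$'' and ``the surviving cross-class interactions are few and can be enumerated directly'' are assertions, not arguments, and you yourself concede that the propagation rules you have established do not detect the configurations that matter. The parity-class stratification is not shown to lead to a tractable enumeration (with $10$ points spread over four $3\times3$ parity classes, the interactions between classes are numerous, and nothing is proved about them), and your final fallback is to the supplementary search routines, i.e.\ to the paper's computer proof, which you have not executed. For comparison, the paper's human-readable proof in the appendix splits on whether $S\cap[6]^2$ contains a pair at distance $1$, then a pair at distance $\sqrt2$ (organized by which diagonal the pair sits on), and finally the easy residual case where every $2\times2$ box holds at most one point; the first two cases each require a sustained, multi-figure case analysis supported by an auxiliary claim bounding points in triangular half-grids. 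None of that work, or a substitute for it, appears in your proposal, so the bound $\card{S\cap[6]^2}\le 9$ is not established.
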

	We provide two proofs of this result. The first uses a computer search, the code for which may be found\locationofprograms
	For this we simply exhaustively checked that all B-stable subsets of $[6]^2$ are either $[6]^2$ or have at most $9$ points.
	However, we also give a human-readable proof, which can be found in the appendix.
	
	We now prove our main result for this section, which together with \cref{cor:completely-stable-densities} completes the proof of \cref{thm:main}\ref{border}.
	\begin{prop}
		\label{prop:main-border}
		If $S \subsetneq \Z^2$ is B-stable, then $\del(S)\le 1/4$.
	\end{prop}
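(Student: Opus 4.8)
The plan is to leverage \cref{lem:6x6} via a straightforward covering (tiling) argument, exactly paralleling the proof of the weaker bound $\del(S) \le 1/3$ sketched above, but with the sharper local count. The crucial numerical observation is that $[6]^2$ contains $36$ lattice points, so the bound $\card{S \cap [6]^2} \le 9$ says precisely that $S$ occupies at most a $9/36 = 1/4$ fraction of each such box.

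First I would record the translation-invariance of B-stability: since every translation $x \mapsto x + z$ is a unimodular transformation, and since (as noted in the excerpt) unimodular transformations preserve the property of a triangle being border, internal, or unimodular, any translate $S - z$ of a B-stable proper subset $S \subsetneq \Z^2$ is again a B-stable proper subset. Applying \cref{lem:6x6} to $S - z$ then yields $\card{S \cap (z + [6]^2)} \le 9$ for every $z \in \Z^2$; that is, every translate of the $6 \times 6$ box captures at most $9$ points of $S$.

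Next I would partition $\Z^2$ into the disjoint translates $B_{i,j} \deftobe (6i, 6j) + [6]^2$ for $(i,j) \in \Z^2$, each of which is a $6\times 6$ box containing exactly $36$ lattice points and, by the previous step, at most $9$ points of $S$. For a fixed $n$, the box $[-n,n]^2$ meets at most $\left(\floor{(2n+1)/6} + 2\right)^2$ of these tiles, so
\begin{equation*}
	\card{S \cap [-n,n]^2} \;\le\; 9 \left(\floor{(2n+1)/6} + 2\right)^2.
\end{equation*}
Finally I would divide by $\card{[-n,n]^2} = (2n+1)^2$ and pass to the limit superior. Since $\floor{(2n+1)/6} + 2 = \tfrac{2n+1}{6} + O(1)$, the right-hand side is $9 \cdot \tfrac{(2n+1)^2}{36} + O(n)$, so the ratio tends to $9/36 = 1/4$, giving $\del(S) \le 1/4$ as claimed.

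The genuine content of the argument is entirely contained in \cref{lem:6x6}; the only obstacle here is the routine bookkeeping of the boundary tiles in the covering, and these contribute merely a lower-order $O(n)$ error that washes out in the density limit. I expect no real difficulty beyond choosing the tile offsets so that the count of intersected tiles is cleanly bounded.
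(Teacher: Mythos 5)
Your proposal is correct and follows essentially the same route as the paper: both cover $[-n,n]^2$ by disjoint translates of the $6\times 6$ box, apply \cref{lem:6x6} (via translation-invariance of B-stability) to get at most $9$ points of $S$ per box, and observe that the boundary tiles contribute only a lower-order error, yielding $\del(S)\le 9/36=1/4$. The only difference is cosmetic bookkeeping in how the number of tiles meeting $[-n,n]^2$ is bounded.
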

	\begin{proof}
		
		Cover $[-n,n]^2$ by $\lceil\frac{2n+1}{6}\rceil^2$ pairwise disjoint translates of $[6]\times[6]$. By \cref{lem:6x6}, each of these translates contains at most $9$ points of $S$, for a total of at most 
		\[
		9\left\lceil\frac{2n+1}{6}\right\rceil^2\le
		9\left(\frac{2n+1}{6}+\frac56\right)^2
		=\frac14((2n+1)^2+20n+35)
		\]
		points in $S \cap [-n,n]^2$.
	\end{proof}
	
	\begin{rem} \label{rem:6x6}
		The $[6] \times [6]$ translates used in the proof of \cref{prop:main-border} are the smallest translates that are able to prove the theorem this way. Indeed, for every $a < 6$ and every $b$, at least one of the B-stable sets $2\Z^2$ or $S_{2/9}$ from \cref{const:2/9} has more than $1/4$ of the points in $[0,a-1] \times [0,b]$.
	\end{rem}

	\subsection{Consecutive pairs in B-stable sets}\label{sec:consecutive}
	
	Here we prove \cref{thm:stab}, which roughly says that if a B-stable set has many consecutive pairs, then its density cannot be close to the maximum of $1/4$. For this we need two lemmas concerning $\Gamma(S)$, which we recall is the set of points $(x,y) \in S$ such that $(x+1,y) \in S$.
	
	\begin{lem}\label{lem:3x3}
		Let $S \subsetneq \Z^2$ be a B-stable set. Then $\card{\Gamma(S) \cap [3]^2} \leq 1$. 
	\end{lem}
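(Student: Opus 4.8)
The plan is to argue by contradiction. Suppose $\Gamma(S)\cap[3]^2$ contained two distinct points $p_1=(x_1,y_1)$ and $p_2=(x_2,y_2)$; by the definition of $\Gamma$, each $p_i$ certifies a horizontal consecutive pair $\{p_i,\,p_i+(1,0)\}\subseteq S$, with $x_i,y_i\in\{1,2,3\}$. The engine throughout will be \cref{prop:unimodularTriangle}(a): since $S\subsetneq\Z^2$ is B-stable, it contains at most two points of every border triangle, so it suffices to exhibit three points of $S$ lying among the four lattice points of some border triangle. I will also use the fact recorded before the statement (via Pick's formula) that a lattice triangle is a border triangle \emph{exactly} when it has area $1$; consequently any area-$1$ lattice triangle is automatically a border triangle, and its four lattice points are its three vertices together with one edge-midpoint.

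I would split into two cases according to whether $y_1=y_2$. If $p_1$ and $p_2$ lie in the same row $y$, then since $x_1\neq x_2$ with $x_1,x_2\in\{1,2,3\}$, the two unit segments $\{x_i,x_i+1\}$ overlap or abut to force three consecutive collinear points $(c,y),(c+1,y),(c+2,y)\in S$; one checks this in the three subcases $\{x_1,x_2\}=\{1,2\},\{1,3\},\{2,3\}$. These three points are three of the four lattice points of the area-$1$ triangle with vertices $(c,y),(c+2,y),(c,y+1)$, which is therefore a border triangle, contradicting B-stability.

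For the case $y_1\neq y_2$ I would keep the full pair $\{(x_1,y_1),(x_1+1,y_1)\}$ from one row and a single point $q=(x_2,y_2)$ from the other. The triangle on these three vertices has area $\tfrac12|y_1-y_2|$. When $|y_1-y_2|=2$ (the rows $1$ and $3$) this area equals $1$, so the triangle is already a border triangle whose three vertices all lie in $S$, a contradiction. The delicate case, which I expect to be the main obstacle, is $|y_1-y_2|=1$ (adjacent rows): here the three points form only a unimodular triangle of area $1/2$, which is \emph{not} a border triangle, so no contradiction is immediate, and indeed one can check that every choice of three among the four available pair-points yields area $1/2$. The resolution is to extend the horizontal base by one unit: the triangle with vertices $(x_1,y_1),(x_1+2,y_1),q$ has area exactly $1$ and hence is a border triangle, and its four lattice points are precisely $(x_1,y_1),(x_1+1,y_1),(x_1+2,y_1),q$. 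Three of these, namely $(x_1,y_1),(x_1+1,y_1),q$, belong to $S$, again contradicting B-stability.

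Since these cases are exhaustive, no two points of $\Gamma(S)$ can lie in $[3]^2$, giving $\card{\Gamma(S)\cap[3]^2}\le 1$. The only places demanding care are the bookkeeping in the same-row subcases and, above all, the base-extension step in the adjacent-row case, where the auxiliary fourth point $(x_1+2,y_1)$ need not lie in $S$ yet is exactly what promotes the unimodular triangle to a genuine border triangle.
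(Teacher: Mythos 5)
Your proof is correct and follows essentially the same route as the paper's: both arguments reduce to exhibiting three points of $S$ among the four lattice points of a border triangle and then invoking \cref{prop:unimodularTriangle}(a), including the same key step of lengthening the horizontal base from $1$ to $2$ so that the adjacent-row (unimodular, area-$1/2$) configuration sits inside an area-$1$ border triangle. The only organizational difference is that the paper excludes from $S$ every point of the $5\times 5$ window around one consecutive pair (treating $(x-2,y)$ separately via its missing partner $(x-1,y)$), whereas you argue directly from two points of $\Gamma(S)$ with a trichotomy on the row difference; both case analyses are complete.
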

	\begin{proof}
		Suppose $(x,y) \in \Gamma(S) \cap [3]^2$, which means $(x,y), (x+1,y) \in S$.
		Checking all the cases, we easily see that any additional point 
		$(x',y')$ with $|x-x'|\le 2$ and $|y-y'|\le 2$ other than 
		$(x-2,y)$ is in a border triangle with $(x,y)$ and $(x+1,y)$, so 
		$(x',y')\notin S$ by \cref{prop:unimodularTriangle}, and thus $(x',y')\notin\Gamma(S)$. Finally, $(x-2,y)\notin\Gamma(S)$ since $(x-1,y)\notin S$. 
	\end{proof}
	
	We obtained the next result by computer search, similar to \cref{lem:6x6}. The code we used may be found\locationofprograms
	
	\begin{lem}\label{lem:consec16}
		Let $S \subsetneq \Z^2$ be a B-stable set. If $\Gamma(S) \cap ([6]\times[12]) \neq \emptyset$, then $$\card{S \cap ([6]\times[12])} \leq 16.$$
	\end{lem}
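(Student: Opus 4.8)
The plan is to reduce the statement to a finite, computer-verifiable optimization over the $72$-cell window $W \deftobe [6]\times[12]$, exactly as suggested by the comparison with \cref{lem:6x6}. Write $T \deftobe S \cap W$. Since $S$ is a B-stable proper subset of $\Z^2$, \cref{prop:unimodularTriangle} tells us that $S$, and hence $T$, contains at most two points of every border triangle; in particular $T$ contains at most two points of every border triangle all four of whose lattice points lie in $W$. These in-window constraints are purely local to $W$, so $\card{T}$ is bounded above by the maximum of $\card{T'}$ over all $T' \subseteq W$ that (i) meet every in-window border triangle in at most two points and (ii) contain a horizontal adjacent pair $(x,y),(x+1,y)$. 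Any upper bound from this relaxation is valid for $\card{T}$, since $S$ satisfies even more (global) constraints; and because the B-stable set $S_{2/9}$ of \cref{const:2/9} already realizes $\card{S_{2/9}\cap W}=16$ for a suitable translate, the relaxed maximum is in fact exactly $16$. Thus it suffices to verify that this finite maximum equals $16$.

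First I would record the structural consequences of \cref{prop:unimodularTriangle} that both drive the pruning and explain why the bound is delicate. A horizontal adjacent pair with left endpoint $(x,y)$ forbids every other point of the $5 \times 5$ box $\{x-2,\dots,x+2\}\times\{y-2,\dots,y+2\}$ except $(x-2,y)$, exactly as in the proof of \cref{lem:3x3}, so any adjacent pair is locally extremely isolating. Moreover, thin border triangles create genuinely long-range constraints inside the window: if $(c,r),(c,r+2)\in T$, then since the triangle with vertices $(c,r)$, $(c,r+2)$, and $(c+1,k)$ (or $(c-1,k)$) has area $1$ and exactly four lattice points for every $k$, no point of columns $c-1$ or $c+1$ can lie in $S$ at all, and symmetrically for two points two apart in a common row. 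In particular $T$ has no three consecutive points in any row or column. These rules alone collapse the configuration space dramatically.

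With this in hand, the finite verification can be carried out by a depth-first search over the cells of $W$ that adds points one at a time and backtracks the instant some border triangle accumulates a third point, or equivalently by handing the border-triangle inequalities and the ``contains a pair'' requirement to an exact ILP/SAT solver and maximizing $\card{T'}$; the code is analogous to that used for \cref{lem:6x6}. One subtlety is the boundary case in which the only witness of $\Gamma(S)\cap W$ has its right endpoint $(7,y)$ outside $W$. This is handled by running the same search on the one-column-wider region $[1,7]\times[12]$, using all border triangles contained in it (which remain valid constraints for $S$) while maximizing the number of chosen points in columns $1$ through $6$, so that a pair straddling the right edge is captured.

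The main obstacle is not the logic but the size of the search: a naive pass over all $2^{72}$ subsets of $W$ is hopeless, and the long-range thin-triangle constraints prevent a naive column-by-column transfer matrix from having both bounded state and only nearest-neighbor interactions. The two features that make the computation tractable are the aggressive pruning afforded by the structural rules above, and the fact that in-window constraints already suffice to force $\card{T'}\le 16$; should they not, one pads $W$ by a fixed margin and uses border triangles in the padded board, which are still satisfied by $S$. Finally, because the extremal value $16$ is attained by $S_{2/9}$, no lossy covering argument, for instance splitting $W$ into two $6\times6$ blocks and applying \cref{lem:6x6} to each (which only yields $18$), can establish the lemma; the bound is tight and essentially global, which is precisely why a direct verified search is the natural route.
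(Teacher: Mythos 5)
Your proposal is correct and is essentially the paper's own proof: the paper also establishes this lemma by a finite depth-first computer search over the window, enumerating sets that meet every border triangle in at most two points (via \cref{prop:unimodularTriangle}) and contain an adjacent pair, exactly as you describe. Your additional remarks on pruning rules, the boundary case where the pair straddles the right edge of the window, and the tightness witnessed by $S_{2/9}$ are all sound refinements of the same computation.
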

	
	We note that \cref{lem:6x6} implies that for any $S \subsetneq \Z^2$ which is B-stable, $|S \cap ([6] \times [12])| \leq 18$, so \cref{lem:6x6} says we have a stronger bound provided $\Gamma(S) \cap ([6]\times[12]) \neq \emptyset$. We can now prove \cref{thm:stab}.
	
	\begin{proof}[Proof of \cref{thm:stab}]
		Let $S \subsetneq \Z^2$ be B-stable. To prove that $\gam(S) \leq 1/9$, we use a similar argument as in the proof of \cref{prop:main-border}. Cover $[-n,n]^2$ by $\left\lceil\frac{2n+1}3\right\rceil^2$ translates of $[3]^2$, each of which contains at most one point of $\Gamma(S)$ by \cref{lem:3x3}. This implies the desired result.
		
		Now we prove that $\del(S) \leq \frac{1}{4}(1-\gam(S))$. Let $\ep>0$, and choose $n$ large enough such that
		\begin{equation}\label{eq:6x12}
			\card{\Gamma(S) \cap [-n,n]^2} \geq (\gam(S)-\ep) (2n+1)^2\geq (\gam(S)-\ep) 4n^2 .
		\end{equation}
		Call a translate of $[6]\times [12]$ {\em deficient} if it contains a point in $\Gamma(S)$. By \cref{lem:consec16}, such a deficient translate contains at most $16$ points in $S$. Since $[6] \times [12]$ can be covered by $8$ translates of $[3]^2$, we have by \cref{lem:3x3} that every translate of $[6] \times [12]$ contains at most $8$ points in $\Gamma(S)$.
		
		Cover $[-n,n]^2$ with $2\left\lceil\frac{2n+1}{12}\right\rceil^2$ translates of $[6]\times [12]$. Using~\eqref{eq:6x12}, this implies that there are at least $\frac12 (\gam(S)-\ep)n^2$ deficient translates among these. Therefore,
		\begin{eqnarray*}
			\card{S \cap [-n,n]^2} &\leq& 18\times 2 \left\lceil\frac{2n+1}{12}\right\rceil^2- 2\times\frac12(\gam(S)-\ep)n^2\\
			&\le&
			(1-\gam(S)+\ep)n^2+12n+36\\
			&<& \frac14(1-\gam(S)+\ep)(2n+1)^2+12n+36.
		\end{eqnarray*}
		Since $\ep>0$ was chosen arbitrarily, this shows that $\del(S) \le (1 - \gam(S))\frac{1}{4}$.
		
		The bound $\gam(S) \leq 1/9$ is best possible, as exhibited by $S_{2/9}$ from \cref{const:2/9}.  Similarly the bounds on $\del(S)$ are best possible when $\gam(S)\in \{0,1/9\}$ by considering $2\Z^2$ and $S_{2/9}$, respectively.
	\end{proof}
	
	\begin{rem}\label{rem:6x12}
		As with \cref{rem:6x6}, the $[6] \times [12]$ translates used in the proof of \cref{thm:stab} are the smallest translates which work for this purpose. Indeed, by \cref{rem:6x6} such an $[a] \times [b]$ window should have $a$ and $b$ divisible by $6$, but there are B-stable sets $S \subset [6] \times [6]$ with $\Gamma(S) \neq \emptyset$ and $\card{S}=9$, as shown in \cref{fig:9pts}.
	\end{rem}
	
	\begin{rem}
		We do not expect the bound $\del(X) \leq (1-\gam(X)) \frac{1}{4}$ for B-stable $X \subsetneq \Z^2$ to be sharp for all values of $\gam(X)$.  In fact, we can not rule out that having even a single consecutive pair in $X$ decreases its density by a fixed constant; see \cref{quest:conscutive}.
	\end{rem}

	\subsection{B-stability without BI-stability}
	Most of the constructions of B-stable sets that we know of are also BI-stable.  However, the following gives an example of a set of positive density which is B-stable but not BI-stable.
	\begin{const}\label{con:B1/12}
		Let $B_{1/12}^* \deftobe \{(1,0),(0,1),(-1,-1)\}$ and $B_{1/12} \deftobe B_{1/12}^*+6\Z^2$; see \cref{fig:B2/12}.
	\end{const}
	
	\begin{figure}
		\centering
		\begin{tikzpicture}[scale=0.36]
			\draw[axisedge] (-1.5,6) -- (15,6); 
			\draw[axisedge] (6,-1.5) -- (6,15); 
			\clip (-1.5,-1.5) rectangle (14.5,14.5);
			\draw (-2,-2)grid(15,15);
			\begin{scope}[xshift=0cm,yshift=0cm]
				\foreach \x in {0,...,3}{
					\foreach \y in {0,...,3}{
						\begin{scope}[xshift=6*\x cm,yshift=6*\y cm]
							\draw (1,0)node[vtx]{};
							\draw (0,1)node[vtx]{};
							\draw (-1,-1)node[vtx]{};
						\end{scope}
				}}
			\end{scope}
		\end{tikzpicture}    
		\caption{
			The blue points are from $B_{1/12}$, a B-stable set of density $1/12$ that is not I-stable. See \cref{con:B1/12}.}
		\label{fig:B2/12}
	\end{figure}
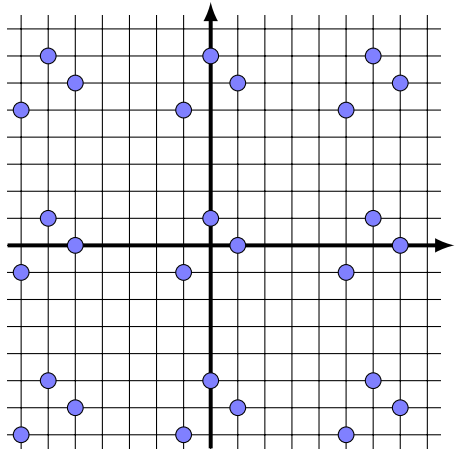

	\begin{prop}
		The set $B_{1/12}$ is B-stable, but not I-stable, with $\del(B_{1/12})=1/12$.
	\end{prop}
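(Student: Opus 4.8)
The plan is to treat the density and the failure of I-stability as quick observations and to concentrate the work on B-stability. For the density, I would note that $B_{1/12}$ is invariant under $6\Z^2$ and that the three base points $(1,0)$, $(0,1)$, $(-1,-1)$ lie in three distinct residue classes modulo $6$; hence $B_{1/12}$ meets each $6\times 6$ fundamental domain in exactly three points, and being doubly periodic it has $\del(B_{1/12})=3/36=1/12$. For the failure of I-stability, I would observe that $B_{1/12}^*$ is exactly the vertex set of the internal triangle $\{(0,0),(1,0),(0,1),(-1,-1)\}$, whose fourth point $(0,0)$ is its interior point and does not lie in $B_{1/12}$; thus this internal triangle contains exactly three points of $B_{1/12}$, so $B_{1/12}$ is not I-stable.

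For B-stability, by \cref{prop:unimodularTriangle} it suffices to show that $B_{1/12}$ meets every border triangle in at most two points. The key idea is to compute areas modulo $6$. Writing each point as $p_0+6\Z^2$ with base point $p_0\in\{(1,0),(0,1),(-1,-1)\}$, I would use that for any $P,Q,R\in B_{1/12}$ the doubled signed area satisfies
\[
\det(Q-P,\,R-P)\equiv\det(q_0-p_0,\,r_0-p_0)\pmod 6,
\]
because the $6\Z^2$-contributions enter the bilinear expansion only through multiples of $6$. Then I would split into cases. If $p_0,q_0,r_0$ are three distinct base points, the right-hand side equals $\pm 3$ (twice the area $3/2$ of the base triangle), so it is $\equiv 3\pmod 6$ and the area of $PQR$ is an odd multiple of $3/2$. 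If instead two of the base points coincide, then two of $P,Q,R$ differ by a vector in $6\Z^2$, which forces $\det(Q-P,R-P)$ to be a multiple of $6$ and hence the area of $PQR$ to be a multiple of $3$. In both cases the area is never $1$ and never $1/2$.

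It then remains to convert this into the border-triangle count. Writing a border triangle as $\{v_1,v_2,v_3,m\}$ with vertices $v_i$ and extra boundary point $m$ the midpoint of edge $v_1v_2$, its non-collinear triples are $\{v_1,v_2,v_3\}$ of area $1$ and $\{v_1,v_3,m\},\{v_2,v_3,m\}$ of area $1/2$, all excluded by the area computation. The only collinear triple is $\{v_1,m,v_2\}$, and here $m-v_1$ is primitive (the segment $v_1v_2$ carries exactly three lattice points). To rule this out I would argue as follows: if $v_1,v_2$ share a base class, then $v_2-v_1\in 6\Z^2$, so $m-v_1=(v_2-v_1)/2\in 3\Z^2$ is not primitive; and if $v_1,v_2$ lie in different base classes, then $v_2-v_1$ has an odd coordinate (each difference of two distinct base points does), so $m=(v_1+v_2)/2\notin\Z^2$. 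Either way $\{v_1,m,v_2\}\not\subset B_{1/12}$. Hence no border triangle has three points in $B_{1/12}$, and \cref{prop:unimodularTriangle} yields B-stability.

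The main obstacle I anticipate is the collinear case rather than the area computation: one must verify that the extra boundary point of a border triangle really is the midpoint of a doubled primitive segment, and then check that this primitivity is incompatible with both ways two points of $B_{1/12}$ can sit relative to each other. The modular area computation itself is routine once the reduction modulo $6$ is in place, and it simultaneously handles the two area-$1$ and area-$1/2$ triples.
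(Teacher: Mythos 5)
Your proposal is correct and follows essentially the same route as the paper: both reduce B-stability to showing that no three points of $B_{1/12}$ are three consecutive collinear lattice points or span a triangle of area $1$ or $1/2$, and both settle the area cases by computing the relevant determinant modulo $6$ (obtaining $\pm 3$ when the three residues are distinct and a multiple of $6$ otherwise). The only difference is cosmetic: in the collinear case the paper checks that no permutation of the base points satisfies $x+y=2z$ and then counts lattice points on the segment joining two congruent points, whereas you exclude the midpoint via parity and non-primitivity --- both arguments are sound.
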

	\begin{proof}
		First note that $B_{1/12}$ is not I-stable, since $B_{1/12}^*$ are the three vertices of an internal triangle, but $B_{1/12}$ does not contain the fourth point, $(0,0)$. It is easy to check that $\del(B_{1/12}) = 3/36 = 1/12$.
		
		To check that $B_{1/12}$ is B-stable, we let $x$, $y$, and $z$ be three points in $B_{1/12}$ and prove that they are not contained in the same border triangle.  To do this, it suffices to confirm they are not all consecutive lattice points in any line, and also that they do not form a triangle of area~$1$ or~$1/2$.
		
		By way of contradiction, suppose first that $x$, $y$ and $z$ are collinear and consecutive.  In particular this means that, without loss of generality, $x+y=2z$. Since $(1,0)$, $(0,1)$, and $(-1,-1)$ do not satisfy this relation in any permutation, $x$, $y$, and $z$ cannot all be distinct mod $6\Z^2$.  But if, say, $x$ and $y$ are equal mod $6\Z^2$, there are at least $5$ lattice points on the line between them; only one of these could be $z$ and hence $x$, $y$, and $z$ are not consecutive.
		
		Suppose instead that $x$, $y$, and $z$ are vertices of a triangle.  The area of that triangle is, up to sign, half the determinant of the matrix with columns $y-x$ and $z-x$.  Thus, we need to show that this determinant is never $\pm 1$ or $\pm 2$.  If any two points are equal mod $6\Z^2$ then this determinant is divisible by $6$.  Otherwise, modulo $6$, this determinant (up to sign) is
		\[
		\det \begin{bmatrix} 2 & 1 \\ 1 & 2 \end{bmatrix} = 3 \pmod{6} .
		\]
		Thus the determinant is not $\pm 1$ or $\pm 2$, so $x$, $y$, and $z$ are not contained together in any border triangle.
	\end{proof}

	\section{I-stable Sets}
	\label{sec:IStableSets}
	
	We start our analysis of I-stable sets by giving a construction which
	achieves upper density 1/2.
	
	\begin{const}\label{const:columns}
		For $n\ge 1$, let $I_n \deftobe \Z \times (n\Z)$.  See
		\cref{fig:I2} for $I_2$.
	\end{const}
	
	\begin{figure}[h!]
		\centering
		\begin{tikzpicture}[scale=0.36]
			\draw[-latex,line width=2pt] (-0.5,6) -- (16.5,6); 
			\draw[-latex,line width=2pt] (6,-0.5) -- (6,16); 
			%\clip (-0.5,-2.5) rectangle (15.5,15.5);
			%\draw (7,-1) node[below]{$I_2$};
			\clip (-0.5,-0.5) rectangle (15.5,15.5);
			\draw (-1,-1)grid(16,16);
			\begin{scope}[xshift=0cm,yshift=0cm]
				\foreach \x in {0,...,15}{
					\foreach \y in {0,...,8}{
						\draw (\x,2*\y)node[vtx]{};
					}
				}
			\end{scope}
		\end{tikzpicture}
		\caption{%
			The I-stable set $I_{2}$ from \cref{const:columns}. 
		}%
		\label{fig:I2}
	\end{figure}

	\begin{lem}\label{lem:internalcolumns}
		If $n\ne 1,3$, then every internal triangle intersects $I_n$ in at
		most two points.
	\end{lem}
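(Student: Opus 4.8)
The plan is to reduce everything to a single structural fact about internal triangles: the unique non-vertex lattice point $w$ of an internal triangle $T$ is the centroid of its three vertices $v_1,v_2,v_3$, so that $3w = v_1+v_2+v_3$. To prove this, I would join $w$ to the three edges of $T$, partitioning $T$ into three lattice triangles $wv_1v_2$, $wv_2v_3$, $wv_3v_1$, each nondegenerate since $w$ lies in the interior. Each such sub-triangle has area at least $1/2$, and their areas sum to the area $3/2$ of $T$; hence each sub-triangle has area exactly $1/2$. Writing $w$ in barycentric coordinates $(\alpha,\beta,\gamma)$ with respect to $v_1,v_2,v_3$, the area of the sub-triangle opposite $v_i$ equals the corresponding barycentric coordinate times $\mathrm{Area}(T)$; equal sub-areas then force $\alpha=\beta=\gamma=1/3$, i.e.\ $w$ is the centroid.

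With this in hand I would argue by contradiction. Suppose some internal triangle $T$ meets $I_n$ in at least three of its four points $v_1,v_2,v_3,w$. Looking only at second coordinates and writing $y_1,y_2,y_3,y_w$ for the heights of $v_1,v_2,v_3,w$, the centroid relation gives $3y_w = y_1+y_2+y_3$. The three chosen points are either all three vertices, or two vertices together with $w$. In the latter case, if say $y_1,y_2,y_w \equiv 0 \pmod n$, then $y_3 = 3y_w - y_1 - y_2 \equiv 0 \pmod n$ as well. So in every case all three vertices lie in $I_n$, that is, $y_1,y_2,y_3$ are all multiples of $n$.

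Finally I would compute the area. Writing $v_i = (x_i, k_i n)$ with $k_i \in \Z$, the edge vectors $v_2-v_1$ and $v_3-v_1$ have second coordinates divisible by $n$, so expanding the determinant computing twice the area pulls out a factor of $n$: one gets $2\,\mathrm{Area}(T) = |n\,m|$ for some integer $m$, and $m \ne 0$ since $T$ is nondegenerate. But an internal triangle has area $3/2$, so $n|m| = 3$; with $|m| \ge 1$ this forces $n \mid 3$, i.e.\ $n \in \{1,3\}$, contradicting the hypothesis $n \ne 1,3$. Hence $T$ meets $I_n$ in at most two points.

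The only genuinely substantive step is the centroid identification; everything after it is a two-line modular computation. (One can alternatively extract the centroid fact from the classification of internal triangles up to unimodular equivalence, but the area-partition argument above is self-contained and avoids needing that classification.) As a sanity check, the argument must break down exactly at $n=3$: there the internal triangle with vertices $\{(0,0),(1,0),(2,3)\}$ has all three vertices at heights divisible by $3$ and so meets $I_3$ in three points, which is precisely why $n=3$ is excluded.
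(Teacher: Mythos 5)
Your proof is correct. The core of your argument --- translating one vertex to the origin and observing that the determinant computing twice the area picks up a factor of $n$ because all second coordinates of points in $I_n$ are multiples of $n$ --- is exactly the paper's computation. Where you diverge is in handling the case where the three points of $T \cap I_n$ consist of two vertices together with the interior point $w$. The paper dispatches both cases at once by noting (via Pick's theorem) that \emph{any} three points of an internal triangle span a triangle of area $1/2$ or $3/2$, and then applying the same divisibility observation: an area that is simultaneously a nonzero multiple of $n/2$ and equal to $1/2$ or $3/2$ forces $n \in \{1,3\}$. You instead prove the centroid identity $3w = v_1 + v_2 + v_3$ (via the area-partition argument, which is sound, and which the paper also asserts later in the proof of its Lemma on congruences modulo $4\Z^2$) and use it to push the third vertex into $I_n$, reducing to the all-vertices case. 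This detour is harmless but unnecessary: two vertices plus $w$ already form a unimodular triangle of area $1/2$, so the determinant argument you apply to the area-$3/2$ case applies verbatim there and yields $n = 1$ directly, with no need for the centroid lemma. The paper's route is shorter; yours has the mild virtue of making the structure of internal triangles explicit, and your sanity check at $n=3$ correctly identifies why that value must be excluded.
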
 
	\begin{proof}
		From Pick's theorem, it is easy to see that every three points of
		an internal triangle have a convex hull of area $1/2$ or $3/2$.  We
		also recall that the area of the convex hull of $(0,0)$, $(a,b)$
		and $(c,d)$ is
		\begin{equation*}
			\frac{1}{2} 
			\begin{vmatrix} 
				a & b \\
				c & d 
			\end{vmatrix} 
			=
			\frac{ad-bc}{2}.
		\end{equation*}
		Thus the area of the convex hull of any three points of $I_n$ is an
		integral multiple of $\frac{n}{2}$.  Since $n \neq 1,3$, we
		conclude that no internal triangle intersects three points of
		$I_n$.
	\end{proof}
	
	Since every subset of $I_n$ also intersects every internal triangle in
	at most two points, we conclude that every subset of $I_n$ is
	I-stable, which gives the following corollary.
	
	\begin{cor}\label{cor:internal-stable-densities}
		For every $\delta \le 1/2$, there exists an I-stable set $I$ with
		$\del(I)=\delta$.
	\end{cor}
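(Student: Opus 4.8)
The plan is to mimic the proof of \cref{cor:completely-stable-densities}, replacing the BI-stable set $2\Z^2$ with the set $I_2 = \Z \times 2\Z$. By \cref{lem:internalcolumns} applied with $n = 2$, every internal triangle meets $I_2$ in at most two points; since this ``at most two points per internal triangle'' property is inherited by every subset, every subset of $I_2$ is I-stable. Moreover, a direct count gives $\del(I_2) = 1/2$: the number of points of $\Z \times 2\Z$ lying in $[-n,n]^2$ is $(2n+1)(n+1)$ when $n$ is even, which is asymptotic to $\tfrac12 (2n+1)^2$.

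Given $\delta \le 1/2$, I would then build the desired set by the same greedy nesting as before. Setting $X^{(0)} \deftobe \emptyset$, I would iteratively choose $X^{(n)} \subseteq I_2 \cap [-n,n]^2$ to contain $X^{(n-1)}$ and to have exactly
\[
\min\bigl\{ \floor{\delta(2n+1)^2},\ \card{I_2 \cap [-n,n]^2} \bigr\}
\]
points of $I_2$, and set $I \deftobe \bigcup_{i \ge 0} X^{(i)}$. The second entry of the minimum guarantees that the target count is always achievable within the available points of $I_2$, while the fact that $\del(I_2) = 1/2 \ge \delta$ ensures that for all sufficiently large $n$ the minimum is attained by the first entry, so that $\card{I \cap [-n,n]^2} = \floor{\delta(2n+1)^2}$ for large $n$, and hence $\del(I) = \delta$.

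Since $I$ is a subset of $I_2$, it is I-stable, as desired. The argument is essentially routine once \cref{lem:internalcolumns} is in hand; the only point requiring slight care is the density bookkeeping, namely confirming that the second entry of the minimum does not interfere with convergence to $\delta$. This follows immediately from $\del(I_2) = 1/2 \ge \delta$, which is exactly the condition making the prescribed density attainable inside $I_2$.
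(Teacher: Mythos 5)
Your proposal is correct and matches the paper's intended argument: the paper derives this corollary by noting (via \cref{lem:internalcolumns}) that every subset of $I_n$ is I-stable and then implicitly reusing the greedy nesting construction from \cref{cor:completely-stable-densities} inside $I_2$, exactly as you do. The only cosmetic imprecision is your claim that the minimum is eventually attained by the first entry for \emph{all} large $n$ (at $\delta=1/2$ this can fail along odd $n$), but since $\del$ is a $\limsup$ and the first entry is attained along a subsequence while the count never exceeds $\floor{\delta(2n+1)^2}$, the conclusion $\del(I)=\delta$ still holds.
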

	
	We now work towards proving that every I-stable set has density at
	most $1/2$.  The analogue of \cref{prop:unimodularTriangle} for
	I-stable sets does not hold, and in particular there exist I-stable
	sets that contain a unimodular triangle.  However, these sets turn out
	to be highly structured.  For this we need the following
	constructions.
	
	\begin{const}\label{const:J14_J12}
		Let $\imin \deftobe \setof{(0,0), (1,0), (0,1), (-1,-1)}$.
		Define $\Triangle \deftobe \imin + 4\Z^{2}$ and $\Square
		\deftobe \Triangle\cup (\Triangle+(2,2))$.  See
		\cref{fig:J14_J12}.
	\end{const}
	
	\begin{figure}[h!]
		\centering
		\begin{tikzpicture}[scale=0.36]
			\draw[axisedge] (-1.5,4) -- (15,4); 
			\draw[axisedge] (4,-1.5) -- (4,15); 
			\clip (-1.5,-4) rectangle (14.5,14.5);
			\draw (7,-2) node[below]{$\Triangle$};
			\clip (-1.5,-1.5) rectangle (14.5,14.5);
			\draw (-2,-2)grid(15,15);
			\begin{scope}[xshift=0cm,yshift=0cm]
				\foreach \x in {0,...,3}{
					\foreach \y in {0,...,3}{
						\begin{scope}[xshift=4*\x cm,yshift=4*\y cm]
							\draw (0,0)node[vtx]{};
							\draw (1,0)node[vtx]{};
							\draw (0,1)node[vtx]{};
							\draw (-1,-1)node[vtx]{};
						\end{scope}
				}}
			\end{scope}
		\end{tikzpicture}    
		\hskip 1em  
		\begin{tikzpicture}[scale=0.36]
			\draw[axisedge] (-0.5,4) -- (15,4); 
			\draw[axisedge] (4,-1.5) -- (4,15); 
			\clip (-1.5,-4) rectangle (14.5,14.5);
			\draw (7,-2) node[below]{$\Square$};
			\clip (-0.5,-1.5) rectangle (15.5,15.5);
			\draw (-1,-2)grid(16,16);
			\begin{scope}[xshift=0cm,yshift=0cm]
				\foreach \x in {-1,0,...,3}{
					\foreach \y in {-1,0,...,3}{
						\foreach \s in {0,2}{
							\draw (\s+4*\x,\s+4*\y)node[vtx]{};
							\draw (\s+4*\x+1,\s+4*\y)node[vtx]{};
							\draw (\s+4*\x,\s+4*\y+1)node[vtx]{};
							\draw (\s+4*\x+1,\s+4*\y+1)node[vtx]{};
				}}}
			\end{scope}
		\end{tikzpicture}
		\caption{%
			The sets $\Triangle$ and $\Square$ from \cref{const:J14_J12}.
		}%
		\label{fig:J14_J12}
	\end{figure}
	
	We will see below that $\Triangle$ and $\Square$ are I-stable.
	Indeed, we show that every I-stable set that contains a unimodular
	triangle must contain some unimodular transformation of $\Triangle$;
	containing any other point in addition to $\Triangle$ forces the
	containment of a unimodular transformation of $\Square$; and
	containing any other point in addition to $\Square$ forces the
	I-stable set to be the entire grid $\Z^2$.
	
	\begin{lemma}\label{lem:X1Construction}
		Let $S$ be an I-stable set containing $\setof{(0,0), (0,1),
			(1,0)}$.  Then $\Triangle \subset S$.
	\end{lemma}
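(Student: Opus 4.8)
The plan is to treat I-stability purely as a forcing rule: whenever three of the four points of an internal triangle lie in $S$, the fourth must lie in $S$ as well. The three given points $(0,0),(0,1),(1,0)$ are precisely three of the four points of the internal triangle $\imin$ (namely the interior point $(0,0)$ together with two of its three vertices), so the missing vertex $(-1,-1)$ is forced and $\imin\subseteq S$. It then suffices to prove that this configuration propagates across the lattice $4\Z^2$, i.e.\ that $\imin+v\subseteq S$ for every $v\in 4\Z^2$; this is exactly the statement $\Triangle\subseteq S$.

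The core of the argument is a single propagation step: if $\imin+v\subseteq S$ for some $v\in 4\Z^2$, then $\imin+v+(4,0)\subseteq S$ and $\imin+v+(0,4)\subseteq S$. By translation invariance I would treat $v=0$. First, the triples inside $\imin$ force, via suitable internal triangles, the points $(4,1)$, $(3,-1)$, $(1,4)$, and $(-1,3)$ (each arises as the apex of an area-$3/2$ triangle whose unique interior point is $(1,0)$ or $(0,1)$). The key extra move is to force an auxiliary point of a \emph{neighboring} block using a long, thin internal triangle: for instance the area-$3/2$ triangle with vertices $(0,1),(-1,3),(1,-4)$ and interior point $(0,0)$ forces $(1,-4)$. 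With this point available, the triangle with vertices $(4,0),(4,1),(1,-4)$ and interior point $(3,-1)$ forces $(4,0)$, and finally the translated copy $\imin+(4,0)$ forces its last remaining point $(5,0)$. This completes $\imin+(4,0)\subseteq S$, and the reflection $(x,y)\mapsto(y,x)$, which fixes $\imin$ setwise, yields $\imin+(0,4)\subseteq S$ by the mirror-image chain.

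To reach the negative directions I would exploit the order-three unimodular symmetry $M=\left(\begin{smallmatrix} 0 & -1 \\ 1 & -1 \end{smallmatrix}\right)$, which fixes $\imin$ setwise and preserves $4\Z^2$ (so it maps I-stable sets to I-stable sets). Conjugating the propagation step by $M$ turns the moves $+(4,0),+(0,4)$ into $+(0,4),+(-4,-4)$, the last of which is new. Hence from any filled block one may step by $(4,0)$, $(0,4)$, and $(-4,-4)$, i.e.\ by $(1,0),(0,1),(-1,-1)$ on the index lattice. A short check shows these three steps suffice to reach every index in $\Z^2$ starting from the origin (for example $(0,1)$ followed by $(-1,-1)$ nets the move $(-1,0)$), so an induction over $4\Z^2$ gives $\imin+4\Z^2=\Triangle\subseteq S$.

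The main obstacle is the propagation step, and in particular the phenomenon that the ``nearest'' internal triangles recover only two of the four points of each adjacent block; completing a block forces a detour through an auxiliary point of a \emph{different} neighboring block, reachable only by an elongated internal triangle (internal triangles have area $3/2$ but may be arbitrarily long and thin). Care is needed to confirm that each explicit triangle used really is internal — area $3/2$ with all three edges primitive, hence exactly one interior lattice point, rather than merely having a lattice centroid — and that the chosen forcing order never stalls. A final point to verify is that the three available translation moves generate all of $4\Z^2$ as a reachable set, and not merely a proper sub-semigroup.
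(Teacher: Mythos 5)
Your proof is correct and follows essentially the same route as the paper's: the identical forcing chain through $(-1,-1)$, $(3,-1)$, $(-1,3)$, $(4,1)$, $(1,-4)$, $(4,0)$, $(5,0)$, and the same order-three unimodular symmetry $\left(\begin{smallmatrix} 0 & -1 \\ 1 & -1 \end{smallmatrix}\right)$ fixing $\imin$ to convert the $+(4,0)$ step into the $+(0,4)$ and $+(-4,-4)$ steps, which together generate $4\Z^2$. The only cosmetic difference is that the paper first replaces $S$ by the minimal I-stable superset of the three given points so that symmetries fixing $\imin$ fix $S$ itself, whereas you apply the symmetry directly to the forcing chain; both are valid.
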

	
	\begin{proof}
		Let $S$ be the intersection of all I-stable sets that contain
		$\setof{(0,0), (0,1), (1,0)}$.  Since an intersection of I-stable
		sets is I-stable, $S$ is an I-stable set, and so $S$ is minimal
		under inclusion among the I-stable sets containing $\setof{(0,0),
			(0,1), (1,0)}$.  Note that $(-1,-1) \in S$ since~$S$ contains the
		other three points of the internal triangle $\imin$, so $\imin
		\subset S$.
		
		By the minimality of $S$, if a unimodular map $U$ fixes $\imin$,
		then $U$ also fixes $S$.  Note that the unimodular maps
		\begin{equation*}
			x
			\mapsto
			\begin{bmatrix*}[r] 
				0 & -1 \\ 1 & -1 
			\end{bmatrix*} 
			x 
			\quad \text{ and } \quad 
			x 
			\mapsto
			\begin{bmatrix*}[r] 
				-1 & 1 \\ -1 & 0 
			\end{bmatrix*} 
			x 
		\end{equation*}
		fix $\imin$ and map $\imin + (4, 0)$ to $\imin + (0, 4)$ and $\imin
		+ (-4, -4)$, respectively.  Thus if we show that $S$ contains
		$\imin + (4,0)$, then $S$ must contain $\imin + (0,4)$ and $\imin +
		(-4,-4)$.  By similar reasoning, $S$ must contain $\imin + (0,4) +
		(-4,-4) = \imin + (-4,0)$ and $\imin + (4,0) + (-4,-4) = \imin +
		(0,-4)$, and thus $S$ contains $\Triangle$.
		
		The following inferences demonstrate that $S$ contains $\imin +
		(4,0)$ (see \cref{fig:X1Construction}):
		\begin{align*}
			(0,0),\, (1,0),\, (0,1) \in S & \qquad\limp\qquad (-1, -1) \in S, \\
			(0,0),\, (1,0),\, (0,1) \in S & \qquad\limp\qquad (3, -1),\, (-1, 3) \in S, \\
			(0,0),\, (1,0),\, (-1,-1) \in S & \qquad\limp\qquad (4, 1) \in S, \\
			(0,0),\, (0,1),\, (-1, 3) \in S & \qquad\limp\qquad (1, -4) \in S, \\
			(1,-4),\, (3, -1),\, (4,1) \in S & \qquad\limp\qquad (4, 0) \in S, \\
			(3, -1),\, (4,1),\, (4, 0) \in S & \qquad\limp\qquad (5,0) \in S. \qedhere
		\end{align*}
	\end{proof}
	
	\begin{figure}
		\centering
		\begin{tikzpicture}[scale=0.6]
			\draw[axisedge] (-1.5,0) -- (6.5,0); 
			\draw[axisedge] (0,-4.5) -- (0,4); 
			\clip (-1.5,-4.5) rectangle (5.5,3.5);
			\draw (-2,-5) grid (6,4);
			\begin{scope}[xshift=0cm,yshift=0cm]
				\draw (0,0)node[vtx2]{\phantom{0}};
				\draw (1,0)node[vtx2]{\phantom{0}};
				\draw (0,1)node[vtx2]{\phantom{0}};
				\draw (-1,-1)node[vtx]{1};
				
				\draw (3,-1)node[vtx]{2};
				\draw (-1,3)node[vtx]{2};
				
				\draw (4,1)node[vtx]{3};
				
				\draw (1,-4)node[vtx]{4};
				
				\draw (4,0)node[vtx]{5};
				
				\draw (5,0)node[vtx]{6};
			\end{scope}
		\end{tikzpicture}
		\caption{%
			An I-stable set $S$ containing $\setof{(0,0), (0,1), (1,0)}$ (in
			red) must contain $\imin$ and $\imin + (4,0)$.  The points are
			labeled according to the order in which their membership in $S$
			is demonstrated in the proof of \cref{lem:X1Construction}.
		}%
		\label{fig:X1Construction}
	\end{figure}
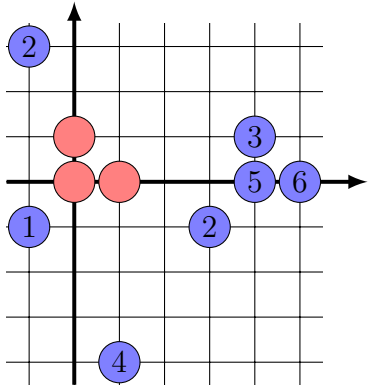
	
	The preceding lemma already shows that any I-stable set containing a
	unimodular triangle contains a unimodular transformation of
	$\Triangle$, since any unimodular triangle is the image of
	$\setof{(0,0), (0,1), (1,0)}$ under some unimodular transformation.
	The following lemma with $\Square$ appears more modest, but see
	\cref{cor:X2IntExtremal} for the analogous conclusion.
	
	\begin{lem}\label{lem:2X2Construction}
		Let $S$ be an I-stable set containing $\setof{(0,0),
			(0,1), (1,0), (1,1)}$.  Then $\Square \subset X$.
	\end{lem}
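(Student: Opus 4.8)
The plan is to reduce the claim to \cref{lem:X1Construction} applied twice, bridging the two copies of $\Triangle$ that make up $\Square$ by a short chain of internal-triangle inferences. The one new ingredient I must use is the fourth corner $(1,1)$ of the unit square, which \cref{lem:X1Construction} never touched.

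First I would note that $S \supset \setof{(0,0),(0,1),(1,0)}$, so \cref{lem:X1Construction} already gives $\Triangle \subset S$. Since $\Square = \Triangle \cup (\Triangle + (2,2))$, it remains only to show $\Triangle + (2,2) \subset S$. Because translations are unimodular transformations and these preserve internal triangles, $S - (2,2)$ is again I-stable; hence by \cref{lem:X1Construction} it suffices to place the translated unimodular triangle $\setof{(2,2),(2,3),(3,2)}$ inside $S$. The whole problem thus collapses to deriving the three points $(2,2)$, $(3,2)$, and $(2,3)$. (I would deliberately avoid any shortcut based on the dihedral symmetry of the unit square, since such an argument only fixes the \emph{minimal} I-stable set containing the square, whereas the inference chain below works for an arbitrary I-stable $S \supseteq \setof{(0,0),(0,1),(1,0),(1,1)}$.)

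The mechanism is that in an internal triangle the non-vertex point is the centroid of the three vertices, so I-stability lets me adjoin to $S$ the centroid of any three points of $S$ whenever those three points span an internal triangle. Concretely, I would first obtain $(2,2)$ as the centroid of $\setof{(0,1),(1,1),(5,4)}$ (here $(0,1) \in \imin$, $(1,1)$ is the extra corner, and $(5,4) \in \Triangle$); then obtain $(3,2)$ as the centroid of $\setof{(2,2),(4,1),(3,3)}$ and, symmetrically in the two coordinates, $(2,3)$ as the centroid of $\setof{(2,2),(1,4),(3,3)}$, where the remaining vertices $(4,1),(3,3),(1,4)$ all lie in $\Triangle$. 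Each triple sums to three times the target point, so that point is its centroid; applying \cref{lem:X1Construction} to $S-(2,2)$ then yields $\Triangle+(2,2)\subset S$, and combining with $\Triangle\subset S$ gives $\Square\subset S$.

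The main obstacle—indeed essentially the only thing needing care—is the bookkeeping that each chosen triple is genuinely an internal triangle: one checks that the three vertices are non-collinear, bound a region of area exactly $3/2$, and have primitive edges (no interior lattice points), so that by Pick's formula the triangle has exactly one interior lattice point, which is forced to be the centroid. Alongside this, one verifies that each auxiliary vertex such as $(5,4)$, $(4,1)$, $(3,3)$, $(1,4)$ indeed lies in $\Triangle = \imin + 4\Z^{2}$ by computing its residue modulo $4$. These are routine determinant and gcd computations, and once they are in place the reduction above completes the proof.
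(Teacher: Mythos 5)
Your proof is correct and follows essentially the same route as the paper: invoke \cref{lem:X1Construction} to get $\Triangle \subset S$, derive the three points $(2,2)$, $(3,2)$, $(2,3)$ by internal-triangle closure, and apply \cref{lem:X1Construction} again after translating by $(2,2)$. The only difference is cosmetic: the paper's three inferences each use two corners of the unit square together with $(1,1)$ as the \emph{interior} point to force a missing vertex, whereas you use triples of vertices (some drawn from distant points of $\Triangle$) to force the centroid; both mechanisms are valid instances of I-stability and your determinant/primitivity checks go through.
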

	\begin{proof}
		We immediately have that
		$\Triangle \subset S$ by \cref{lem:X1Construction}.  In
		addition, $\imin + (2,2) \subset S$ because
		\begin{align*}
			(0,0),\, (1,0),\, (1,1) \in S & \qquad\limp\qquad (2,3) \in S, \\
			(0,0),\, (0,1),\, (1,1) \in S & \qquad\limp\qquad (3,2) \in S, \\
			(1,0),\, (0,1),\, (1,1) \in S & \qquad\limp\qquad (2,2) \in S.
		\end{align*}
		Thus, again by \cref{lem:X1Construction}, we have that
		$\Triangle + (2,2) = \imin + (2,2) + 4\Z^{2} \subset S$.
	\end{proof}
	
	We prove below that $\Triangle$ and $\Square$ are in fact I-stable.
	The case analyses in these proofs will make frequent use of the
	following elementary lemma.  Given vectors $a, b \in \Z^{2}$, we write
	$\det(a, b)$ for the determinant of the $2 \times 2$ matrix with first
	column $a$ and second $b$.
	
	\begin{lem}\label{lem:InternalMinimalCongruences}
		Let $\setof{a, b, c, d} \subset \Z^2$ be an internal triangle, and
		let $a_0 \equiv a$, $b_0 \equiv b$, $c_0 \equiv c$, and $d_0 \equiv
		d$, modulo $4\Z^2$.  Then $a_0 + b_0 + c_0 + d_0 \equiv 0 \mod
		4\Z^2$ and $\det(b_{0}-a_{0}, c_{0}-a_{0}) \cong \pm 1 \mod 4$.
	\end{lem}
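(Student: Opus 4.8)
The plan is to reduce everything to the internal geometry of a single internal triangle and then push the resulting integer identities through the mod-$4$ reduction. Write the four points as $\{p, v_1, v_2, v_3\}$, where $p$ is the unique interior point and $v_1, v_2, v_3$ are the three vertices; this labeling is well defined since an internal triangle has exactly one interior lattice point and no boundary lattice points besides its vertices. The key preliminary observation is that the three sub-triangles $\{p, v_1, v_2\}$, $\{p, v_2, v_3\}$, and $\{p, v_3, v_1\}$ each have area exactly $1/2$: since $p$ is interior, they partition the triangle $v_1 v_2 v_3$, so their areas sum to the total area $3/2$; and every nondegenerate lattice triangle has area at least $1/2$ by Pick's formula. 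Three nonnegative areas, each at least $1/2$ and summing to $3/2$, must each equal $1/2$, so each sub-triangle is unimodular.

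For the first congruence I would use this equal-area decomposition to pin down $p$ exactly. The barycentric coordinates of $p$ with respect to $v_1, v_2, v_3$ are the ratios of the opposite sub-triangle areas to the total area, hence all equal to $(1/2)/(3/2) = 1/3$. Therefore $p$ is the centroid and $v_1 + v_2 + v_3 = 3p$ as an identity in $\Z^2$, giving $p + v_1 + v_2 + v_3 = 4p \equiv 0 \pmod{4\Z^2}$. Since $a_0 \equiv a, \dots, d_0 \equiv d \pmod{4\Z^2}$ and $\{a,b,c,d\} = \{p,v_1,v_2,v_3\}$ regardless of labeling, we also get $a_0 + b_0 + c_0 + d_0 \equiv 0 \pmod{4\Z^2}$.

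For the determinant congruence I would first note that reduction mod $4$ is a ring homomorphism, so from $b_0 - a_0 \equiv b - a$ and $c_0 - a_0 \equiv c - a$ we get $\det(b_0 - a_0, c_0 - a_0) \equiv \det(b - a, c - a) \pmod 4$. Now $\lvert \det(b-a, c-a) \rvert$ is twice the area of the triangle on the three points $a, b, c$, which are three of the four points of the internal triangle, so there are two cases. If $\{a,b,c\}$ is the vertex set $\{v_1, v_2, v_3\}$, that area is $3/2$ and the determinant is $\pm 3 \equiv \pm 1 \pmod 4$. Otherwise $\{a,b,c\}$ contains $p$, so it is one of the unimodular sub-triangles above, its area is $1/2$, and the determinant is $\pm 1$. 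Every labeling lands on $\pm 1 \pmod 4$, as claimed. One should also check the three chosen points are never collinear, which holds because $v_1, v_2, v_3$ are noncollinear and $p$, having all barycentric coordinates equal to $1/3 > 0$, lies off each edge line.

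The only real subtlety, and the step I would be most careful about, is the equal-area decomposition and its barycentric consequence; once $p$ is identified as the centroid, the rest is bookkeeping. An alternative route would be to first establish (by the same area argument) that every internal triangle is a unimodular image of $\imin$, for which $p + v_1 + v_2 + v_3 = 0$ holds exactly; then an affine unimodular map $x \mapsto Mx + t$ sends the sum to $M \cdot 0 + 4t = 4t \equiv 0 \pmod{4\Z^2}$, recovering the first congruence, while $\det M = \pm 1$ recovers the second. I would use whichever of these fits the conventions already set up in the surrounding text.
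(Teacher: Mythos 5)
Your proof is correct and follows essentially the same route as the paper's: identify the unique interior point as the centroid to obtain $a_0+b_0+c_0+d_0 \equiv 4p \equiv 0 \pmod{4\Z^2}$, then split into cases according to whether the three chosen points are the vertices (determinant $\pm 3 \equiv \mp 1$) or include the interior point (determinant $\pm 1$). The only difference is that you actually justify the centroid claim via the equal-area/barycentric decomposition, a step the paper dismisses as ``not difficult to show,'' so your write-up is if anything more complete.
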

	
	Note that the points $a_0, b_0, c_0, d_0$ in
	\cref{lem:InternalMinimalCongruences} are not necessarily
	\textit{uniformly} translated copies of $a,b,c,d$, so they may no longer
	form a minimal triangle.
	
	\begin{proof}[Proof of \cref{lem:InternalMinimalCongruences}]
		Let $T \deftobe \setof{a, b, c, d}$.  It is not difficult to show
		that for any internal triangle, the lattice points that it contains
		are its centroid (\textit{i.e., arithmetic mean}) and its vertices.
		If, without loss of generality, $a$ is the centroid of $T$, then
		$3a = b+c+d$, and so $a_0 + b_0 + c_0 + d_0 \equiv a + b + c + d
		\equiv 0 \mod 4\Z^2$.
		
		To prove the second claim, note that, modulo~$4$, $\det(b_{0} -
		a_{0}, c_{0} - a_{0}) \cong \det(b-a, c-a) \in \setof{\pm 1, \pm
			3}$ since if $a,b,c$ form a unimodular triangle, then the
		determinant is $\pm 1$, and otherwise, $a, b, c$ are the vertices
		of $T$ so the determinant is $\pm 3$.
	\end{proof}
	
	We now prove that $\Triangle$ and $\Square$ are stable.
	
	\begin{prop}\label{prop:X1IntStable}
		The set $\Triangle$ is I-stable with $\del(\Triangle) = 1/4$.
	\end{prop}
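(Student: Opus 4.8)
The plan is to verify the density by a direct count and then to deduce I-stability from the congruence relations in \cref{lem:InternalMinimalCongruences}. For the density, I would first note that the four points of $\imin$ occupy four distinct residue classes modulo $4\Z^{2}$, namely $(0,0)$, $(1,0)$, $(0,1)$, and $(3,3)$. Hence $\Triangle = \imin + 4\Z^{2}$ meets each $4\times 4$ fundamental domain of $4\Z^{2}$ in exactly these four points, so $\del(\Triangle) = 4/16 = 1/4$.

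For I-stability, let $R \deftobe \setof{(0,0),(1,0),(0,1),(3,3)}$ be the residues of $\imin$ modulo $4\Z^{2}$, so that a point lies in $\Triangle$ exactly when its residue modulo $4\Z^{2}$ lies in $R$. The arithmetic fact driving the whole argument is that the elements of $R$ sum to $(4,4) \equiv (0,0) \pmod{4\Z^{2}}$. I would then suppose for contradiction that some internal triangle $\setof{a,b,c,d}$ has exactly three of its points in $\Triangle$, labeling the three inside points $a,b,c$ and the outside point $d$. Writing $a_{0}, b_{0}, c_{0}, d_{0}$ for the residues modulo $4\Z^{2}$, \cref{lem:InternalMinimalCongruences} supplies both $a_{0}+b_{0}+c_{0}+d_{0} \equiv 0 \pmod{4\Z^{2}}$ and $\det(b_{0}-a_{0}, c_{0}-a_{0}) \equiv \pm 1 \pmod 4$.

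The first key step is to show that $a_{0}, b_{0}, c_{0}$ are three \emph{distinct} elements of $R$. This is where the determinant half of \cref{lem:InternalMinimalCongruences} is essential: if two of these residues agreed, say $a_{0} = b_{0}$, then the column $b_{0}-a_{0}$ would vanish modulo $4$ and force $\det(b_{0}-a_{0}, c_{0}-a_{0}) \equiv 0$, contradicting the determinant congruence. Thus $\setof{a_{0},b_{0},c_{0}}$ is a three-element subset of $R$ omitting exactly one element $r \in R$. Combining $a_{0}+b_{0}+c_{0} \equiv -r$ (which uses $\sum_{p\in R}p \equiv 0$) with the sum relation then yields $d_{0} \equiv r \in R$, i.e.\ $d \in \Triangle$, contradicting our assumption. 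Hence no internal triangle has exactly three of its points in $\Triangle$, which is precisely I-stability.

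The hard part is really just the distinctness step: the sum relation alone does not preclude configurations with a repeated residue, and it is exactly the determinant congruence that rules these out. Everything else reduces to the single observation that $R$ sums to zero modulo $4\Z^{2}$, together with the elementary count for the density. I would also remark that, because $\imin$ itself is an internal triangle contained in $\Triangle$, one cannot strengthen this to ``$\Triangle$ meets every internal triangle in at most two points''; the correct statement is only that the intersection is never of size exactly three.
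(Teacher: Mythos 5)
Your proof is correct and follows essentially the same route as the paper: both rest on \cref{lem:InternalMinimalCongruences}, using the determinant congruence to force $a_0,b_0,c_0$ to be distinct residues and the sum congruence to pin down $d_0$. The only difference is cosmetic but pleasant: where the paper checks the four possible three-element subsets of $\imin$ individually, you observe once that the residues of $\imin$ sum to $(0,0)$ modulo $4\Z^2$, so $d_0$ must be the omitted residue — a clean way to collapse the case analysis.
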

	
	\begin{proof}
		Let $T$ be an internal triangle that intersects $\Triangle$ in at
		least three points $a$, $b$, and $c$.  We show that the fourth
		point~$d$ of $T$ is in $\Triangle$.
		
		By construction, the points $a,b,c$ are respectively congruent
		modulo $4\Z^2$ to points $a_0, b_0, c_0$ in~$\imin$.  By
		\cref{lem:InternalMinimalCongruences}, the points $a_0, b_0, c_0$
		are pairwise distinct and $d$ is congruent to a point $d_0$ in
		$[-1,2]^2$ that satisfies the congruence $a_0 + b_0 + c_0 + d_0
		\equiv 0 \mod 4\Z^2$.  By checking each of the four subsets of
		$\imin$ that may equal $\setof{a_0, b_0, c_0}$, one finds in each
		case that this congruence forces $d_0 \in \imin$, and so $d \in
		\Triangle$, as desired.
		
		That $\del(\Triangle) = 1/4$ follows from the fact that $\Triangle$
		contains $4$ of the $16$ points in the window $[-1,2]^{2}$.
	\end{proof}
	
	\begin{prop}\label{prop:X2IntStable}
		The set $\Square$ is I-stable with $\del(\Square)=1/2$.
	\end{prop}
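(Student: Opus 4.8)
The density claim is immediate: both $\Triangle$ and its translate $\Triangle \deftobe$—more precisely, both $\Triangle$ and $\Triangle + (2,2)$—are $4\Z^{2}$-periodic of density $1/4$ (by \cref{prop:X1IntStable}), and they are disjoint, so $\del(\Square) = 1/4 + 1/4 = 1/2$; equivalently $\Square$ meets each $4 \times 4$ fundamental domain in exactly $8$ of its $16$ points. For the stability claim, I would take an internal triangle $T = \setof{a,b,c,d}$ with $a,b,c \in \Square$ and show that its fourth point $d$ also lies in $\Square$.

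The plan is to organize the argument by how $a,b,c$ distribute between the two disjoint pieces $\Triangle$ and $\Triangle' \deftobe \Triangle + (2,2)$ of $\Square$. If all three lie in $\Triangle$, then $T$ is an internal triangle with at least three points in the I-stable set $\Triangle$, so by \cref{prop:X1IntStable} its fourth point satisfies $d \in \Triangle \subset \Square$. If all three lie in $\Triangle'$, the same conclusion follows, since $\Triangle'$ is a translate of $\Triangle$ and translation by $(2,2)$ is a unimodular transformation preserving internal triangles (and, incidentally, swapping $\Triangle$ with $\Triangle'$ and hence fixing $\Square$). Using this same $(2,2)$-symmetry, the only remaining configuration is, without loss of generality, that exactly two of $a,b,c$ lie in $\Triangle$ and one lies in $\Triangle'$.

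For this mixed case I would reduce modulo $4\Z^{2}$ exactly as in the proof of \cref{prop:X1IntStable}. By \cref{lem:InternalMinimalCongruences}, the residues $a_0,b_0,c_0,d_0$ satisfy $d_0 \equiv -(a_0+b_0+c_0)$ and $\det(b_0-a_0, c_0-a_0) \equiv \pm 1 \pmod 4$, the latter forcing $a_0,b_0,c_0$ to be pairwise distinct. Since $a_0,b_0$ then range over the four residues of $\imin$ and $c_0$ over the four residues of $\imin + (2,2)$, there are only finitely many triples to examine, and one checks case by case that the forced residue $d_0$ is again a residue of $\Square$, whence $d \in \Square$. The crux of the proof—and the one place it genuinely needs the determinant hypothesis rather than the sum condition alone—is that certain residue triples with $a_0, b_0 \in \imin$ and $c_0 \in \imin + (2,2)$ (for instance $(0,0),(1,0),(2,2)$) produce a fourth residue outside $\Square$; these are precisely the triples with $\det(b_0-a_0,c_0-a_0) \equiv \pm 2 \pmod 4$, which \cref{lem:InternalMinimalCongruences} rules out as non-realizable by any internal triangle. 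Verifying that every \emph{realizable} mixed triple yields $d_0 \in \Square$ is thus the main (finite) obstacle, and dispatching it completes the proof.
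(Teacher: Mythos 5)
Your proposal is correct and follows essentially the same route as the paper: reduce the three points modulo $4\Z^{2}$, invoke \cref{prop:X1IntStable} when all residues land in $\Triangle$ (or its translate), and otherwise use both conclusions of \cref{lem:InternalMinimalCongruences} --- the sum condition to determine $d_0$ and the determinant condition $\det(b_0-a_0,c_0-a_0)\cong\pm1\pmod 4$ to eliminate the non-realizable mixed triples such as $(0,0),(1,0),(2,2)$ --- before a finite residue check; the paper organizes that check by normalizing $a_0=(0,0)$ via the order-4 rotation about $(\frac12,\frac12)$ and reflection in $x=y$, reducing it to a six-row table, whereas you organize by the distribution of points between $\Triangle$ and $\Triangle+(2,2)$, but the content is the same. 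You correctly identify the one genuinely load-bearing step (the determinant hypothesis), and the deferred finite verification is exactly what the paper's table carries out.
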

	
	\begin{proof}
		Let $T$ be an internal triangle that intersects $\Square$ in at
		least three points $a$, $b$, and $c$, and let $d$ be the fourth
		point of $T$.  Let $a_{0}, b_{0}, c_{0}, d_{0}$ be the points
		in $[-1,2]^{2}$ that are congruent to
		$a,b,c,d$, respectively, modulo $4\Z^{2}$.
		
		By rotating $T$ by 90 degrees about the rational point
		$(\frac{1}{2}, \frac{1}{2})$ some integer number of times, and then
		adding the vector $(2,2)$ if necessary, we may assume without loss
		of generality that $a_{0} = (0,0)$.  If $b_{0}, c_{0} \in [-1,2]^2
		\cap \Triangle$, then the claim follows from
		\cref{prop:X1IntStable}, so assume without loss of generality that
		$c_{0} \in [-1,2]^2 \cap \Square \setminus \Triangle$; see
		\cref{fig:X2IntStable}.
		
		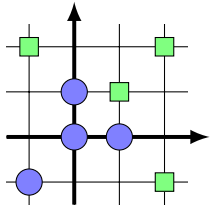
\begin{figure}
			\centering
			\begin{tikzpicture}[scale=0.6]
				\draw[axisedge] (-1.5,0) -- (3,0); 
				\draw[axisedge] (0,-1.5) -- (0,3); 
				\clip (-1.5,-1.5) rectangle (2.5,2.5);
				\draw (-2,-2)grid(3,3);
				\begin{scope}[xshift=0cm,yshift=0cm]
					\draw (0,0)node[vtx4]{};
					\draw (1,0)node[vtx4]{};
					\draw (0,1)node[vtx4]{};
					\draw (-1,-1)node[vtx4]{};
					\draw (2,2)node[gs4vtx]{};
					\draw (-1,2)node[gs4vtx]{};
					\draw (2,-1)node[gs4vtx]{};
					\draw (1,1)node[gs4vtx]{};
				\end{scope}
			\end{tikzpicture}    \\
			\caption{Blue circles indicate the points in $[-1,2]^2 \cap
				\Triangle$.  Green squares indicate the points in $[-1,2]^2 \cap
				\Square \setminus \Triangle$.}
			\label{fig:X2IntStable}
		\end{figure}
		
		If $b_{0} \in [-1,2]^2 \cap \Triangle$, then, after reflecting
		about the line $x = y$ if necessary, we have that $b_{0} \in
		\setof{(1,0), (-1,-1)}$.  If $b_{0} = (1,0)$, then the condition
		that $\det(b_{0}, c_{0}) \cong \pm 1 \mod 4$ forces $c_{0} \in
		\setof{(2, -1), (1,1)}$, while if $b_{0} = (-1, -1)$, then (after
		reflecting about the line $x = y$ if necessary), we likewise have
		that $c_{0} = (2, -1)$.
		
		On the other hand, if $b_{0} \in [-1,2]^2 \cap \Square \setminus
		\Triangle$, then, up to reflection about the line $x=y$, we have
		that $b_{0} \in \setof{(1,1), (2,2), (2,-1)}$.  However, $b_{0} =
		(2,2)$ is impossible because no value of $c_{0} \in [-1,2]^2 \cap
		\Square \setminus \Triangle$ satisfies $\det((2,2), c_{0}) \cong
		\pm 1 \mod 4$, so in fact $b_{0} \in \setof{(1,1), (2,-1)}$ in this
		case.  If $b_{0} = (1,1)$, then, up to reflection about the line
		$x=y$, $c_{0} = (2, -1)$, while if $b_{0} = (2,-1)$, then $c_{0}
		\in \setof{(1,1), (-1,2)}$.
		
		In summary, the following table gives the possible values of
		$b_{0}, c_{0}$ that we must consider, together with the
		corresponding values of $d_{0}$ forced by the condition that $a_{0}
		+ b_{0} + c_{0} + d_{0} \cong 0 \mod 4\Z^{2}$.
		\begin{equation*}
			\begin{array}{lll}
				b_{0}   & c_{0}  & d_{0}   \\
				\hline
				(1,0)   & (2,-1) & (1,1)   \\
				(1,0)   & (1,1)  & (2,-1)  \\
				(-1,-1) & (2,-1) & (-1,2)  \\
				(1,1)   & (2,-1) & (1,0)   \\
				(2,-1)  & (1,1)  & (1,0)   \\
				(2,-1)  & (-1,2) & (-1,-1)
			\end{array}      
		\end{equation*}
		In each case, we find that $d_{0} \in [-1,2]^2 \cap \Square$, so
		that $d \in \Square$, as required.
		
		That $\del(\Square) = 1/2$ follows from the fact that $\Square$
		contains $8$ of the $16$ points in the window $[-1,2]^{2}$.
	\end{proof}
	
	As an aside, this last result implies that there are two very
	different I-stable sets $S$ with $\del(S)=1/2$, namely $\Square$ and
	$I_2$.  This is quite far from the situation for B-stable and
	BI-stable sets, where the only stable sets that we know of which
	achieve the maximum value $\del(S)=1/4$ are subsets of $2\Z^2$ (see
	\cref{quest:borderUnique}).
	
	Combining all of these results gives the following.
	
	\begin{cor}\label{cor:X2IntExtremal}
		Let $S$ be an I-stable set. 
		\begin{enumerate}[label=(\alph*)]
			\item 
			If $S$ contains a unimodular triangle, then $S$ contains a
			unimodular transformation of $\Triangle$.
			
			\item
			If $S$ properly contains $\Triangle$, then $S$ contains a
			unimodular transformation of $\Square$.
			
			\item
			If $S$ properly contains $\Square$, then $S = \mathbb Z^2$.
		\end{enumerate}
		In particular, the only I-stable sets which contain a unimodular
		triangle are unimodular transformations of $\Triangle$, unimodular
		transformations of $\Square$, and $\Z^2$.
	\end{cor}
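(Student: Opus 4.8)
The plan is to obtain parts (a)--(c) from \cref{lem:X1Construction,lem:2X2Construction,prop:X1IntStable,prop:X2IntStable}, with (b) and (c) reduced to short finite case analyses. I would first record that I-stability is preserved by unimodular transformations: such a map carries internal triangles bijectively to internal triangles, so if $S$ is I-stable and $U$ is unimodular then $U(S)$ is I-stable as well. Part (a) is then immediate: any unimodular triangle is $U\big(\setof{(0,0),(1,0),(0,1)}\big)$ for some unimodular $U$, and applying $U^{-1}$ reduces to the hypotheses of \cref{lem:X1Construction}, giving $\Triangle \subseteq U^{-1}(S)$, i.e.\ $U(\Triangle) \subseteq S$.

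For part (b), I would use that $\Triangle = \imin + 4\Z^2$ is $4\Z^2$-periodic. Given $p \in S \setminus \Triangle$, translate by an element of $4\Z^2$ so that $p$ lies in $[-1,2]^2$; then its class $p_0$ is one of the $12$ points of $[-1,2]^2 \setminus \imin$. The stabilizer of $\imin$ in the unimodular group (generated by the two maps appearing in the proof of \cref{lem:X1Construction} together with the reflection across $x = y$) acts on these $12$ positions, reducing the work to a few representatives. Since $\Triangle$ already contains the three corners $(0,0),(1,0),(0,1)$ of the standard unit square, it suffices in each representative case to exhibit a short chain of internal-triangle completions (in the style of the inference lists in \cref{lem:X1Construction}) forcing the missing corner $(1,1)$, or the missing corner of some translated unit square. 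Then \cref{lem:2X2Construction}, applied after a unimodular change of coordinates sending that unit square to the standard one, yields a unimodular transform of $\Square$ inside $S$.

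Part (c) is similar but ends with an overlap argument. Writing $R$ for the $8$ residues modulo $4\Z^2$ of $\Square$, one checks that the complementary $8$ residues are exactly $R + (2,0)$, so that $\Square \cup (\Square + (2,0)) = \Z^2$; the analogous fact holds for the other ``empty'' blocks by the symmetries of $\Square$, which include translation by $(2,2)$, the quarter-turn about $(\tfrac12,\tfrac12)$, and reflection across $x=y$. Given $p \in S \setminus \Square$, I would reduce modulo $4\Z^2$ and under these symmetries to a representative of the $8$ positions of $[-1,2]^2 \setminus \Square$, and show that $\Square \cup \setof{p}$ forces, through internal-triangle completions, all four points of one empty unit block. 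Applying \cref{lem:2X2Construction} to this block produces a translate $\Square + v$ filling the complementary residues, whence $S \supseteq \Square \cup (\Square + v) = \Z^2$. Finally, the ``in particular'' statement follows by chaining the three parts: after a unimodular transformation we may assume $\Triangle \subseteq S$; if this containment is proper we are pushed up to a copy of $\Square$, and if that containment is proper we reach $\Z^2$.

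I expect the forced completion chains in (b) and (c) to be the main obstacle. The difficulty is purely combinatorial bookkeeping --- choosing, for each representative position of the extra point, a correct order of internal-triangle completions --- and the role of the symmetry groups of $\imin$ and $\Square$ is precisely to cut the number of positions down to a list short enough to verify by hand (or, as done elsewhere in the paper, by a brief computer search).
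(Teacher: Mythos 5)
Your plan follows the paper's proof almost exactly: part (a) is the same immediate application of \cref{lem:X1Construction} after a unimodular change of coordinates; parts (b) and (c) are the same strategy of reducing the extra point to finitely many positions modulo $4\Z^2$ and the symmetries of $\imin$ (resp.\ $\Square$), forcing a unit square $\setof{(0,0),(1,0),(0,1),(1,1)}$ (up to unimodular equivalence) by internal-triangle completions, and then invoking \cref{lem:2X2Construction}; and your observation that $\Square + (2,0) = \Z^2 \setminus \Square$ is exactly how the paper closes part (c). The one caveat is that you have deferred precisely the step that carries the content of (b) and (c): you assert that ``a short chain of internal-triangle completions'' exists for each representative position of the extra point, but do not exhibit any such chain, and there is no a priori reason a given position must force a unit square rather than, say, stabilizing into some larger I-stable set. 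The paper does this work explicitly --- for (b) it organizes the seven candidate positions in $[0,3]^2 \setminus \Triangle$ into three cascading cases, each ending in a unimodular image of the unit square, and for (c) a three-step chain starting from $(2,0)$ produces $\setof{(2,0),(3,0),(2,1),(3,1)}$ --- so your outline is correct and completable, but as written it is a proof schema rather than a proof until those finitely many completions are checked.
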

	
	\begin{proof}
		Observe that $\Triangle$ and $\Square$ are I-stable by
		\cref{prop:X1IntStable,prop:X2IntStable}, respectively.  By
		\cref{lem:X1Construction}, any I-stable set containing a unimodular
		triangle contains a unimodular transformation of $\Triangle$.
		
		We show that if $S$ is an I-stable set containing both $\Triangle$
		and also a point not in $\Triangle$, then $S$ contains a unimodular
		transformation of $\Square$.  Up to translation and reflection, we
		may assume that $S$ contains some $(x,y)$ with $0\leq x\leq y\leq
		3$ not in $\Triangle$, so there are 7 points to consider.  We split
		these into three cases (see \cref{fig:Int_Stable_with_UniTri}).
		
		\begin{figure}[h!]
			\centering
			% INITIAL 1/4 SET
			\begin{tikzpicture}[scale=0.5]
				\draw[axisedge] (-0.5,4) -- (14,4); 
				\draw[axisedge] (4,-0.5) -- (4,14); 
				\clip (-0.5,-0.5) rectangle (13.5,13.5);
				\draw (-2,-2)grid(15,15);
				\begin{scope}[xshift=0cm,yshift=0cm]
					\foreach \x in {0,...,3}{
						\foreach \y in {0,...,3}{
							\begin{scope}[xshift=4*\x cm,yshift=4*\y cm]
								\draw (0,0)node[vtx]{};
								\draw (1,0)node[vtx]{};
								\draw (0,1)node[vtx]{};
								\draw (-1,-1)node[vtx]{};
							\end{scope}
					}}
				\end{scope}
			\end{tikzpicture} \hfill 
			\vspace{1em}
			% ALL 1/2 SET
			\begin{tikzpicture}[scale=0.5]
				\draw[axisedge] (-0.5,4) -- (14,4); 
				\draw[axisedge] (4,-0.5) -- (4,14); 
				\clip (-0.5,-0.5) rectangle (13.5,13.5);
				\draw (-2,-2)grid(15,15);
				\begin{scope}[xshift=0cm,yshift=0cm]
					\foreach \x in {0,...,3}{
						\foreach \y in {0,...,3}{
							\begin{scope}[xshift=4*\x cm,yshift=4*\y cm]
								\draw (0,0)node[vtx]{};
								\draw (1,0)node[vtx]{};
								\draw (0,1)node[vtx]{};
								\draw (-1,-1)node[vtx]{};
								\draw (1,1) node[gsvtx]{};
								\draw (2,2) node[gsvtx]{};
								\draw (3,2) node[gsvtx]{};
								\draw (2,3) node[gsvtx]{};
								\draw (-1,1) node[gs2vtx]{};
								\draw (0,2) node[gs2vtx]{};
								\draw (1,2) node[gs2vtx]{};
								\draw (0,3) node[gs2vtx]{};
								\draw (2,1) node[gs3vtx]{};
								\draw (2,0) node[gs3vtx]{};
								\draw (3,0) node[gs3vtx]{};
								\draw (1,3) node[gs3vtx]{};
							\end{scope}
					}}
				\end{scope}
			\end{tikzpicture}
			% FIRST 1/2 SET
			\begin{tikzpicture}[scale=0.3]
				\draw[axisedge] (-0.5,4) -- (15,4); 
				\draw[axisedge] (4,-0.5) -- (4,15); 
				\clip (-0.5,-0.5) rectangle (13.5,13.5);
				\draw (-2,-2)grid(15,15);
				\begin{scope}[xshift=0cm,yshift=0cm]
					\foreach \x in {0,...,3}{
						\foreach \y in {0,...,3}{
							\begin{scope}[xshift=4*\x cm,yshift=4*\y cm]
								\draw (0,0)node[vtx]{};
								\draw (1,0)node[vtx]{};
								\draw (0,1)node[vtx]{};
								\draw (-1,-1)node[vtx]{};
								\draw (2,1) node[gs3vtx]{};
								\draw (2,0) node[gs3vtx]{};
								\draw (3,0) node[gs3vtx]{};
								\draw (1,3) node[gs3vtx]{};
							\end{scope}
					}}
				\end{scope}
			\end{tikzpicture} \hspace{1cm}
			% SECOND
			\begin{tikzpicture}[scale=0.3]
				\draw[axisedge] (-0.5,4) -- (15,4); 
				\draw[axisedge] (4,-0.5) -- (4,15); 
				\clip (-0.5,-0.5) rectangle (13.5,13.5);
				\draw (-2,-2)grid(15,15);
				\begin{scope}[xshift=0cm,yshift=0cm]
					\foreach \x in {0,...,3}{
						\foreach \y in {0,...,3}{
							\begin{scope}[xshift=4*\x cm,yshift=4*\y cm]
								\draw (0,0)node[vtx]{};
								\draw (1,0)node[vtx]{};
								\draw (0,1)node[vtx]{};
								\draw (-1,-1)node[vtx]{};
								\draw (-1,1) node[gs2vtx]{};
								\draw (0,2) node[gs2vtx]{};
								\draw (1,2) node[gs2vtx]{};
								\draw (0,3) node[gs2vtx]{};
							\end{scope}
					}}
				\end{scope}
			\end{tikzpicture} \hspace{1cm} 
			% THIRD
			\begin{tikzpicture}[scale=0.3]
				\draw[axisedge] (-0.5,4) -- (15,4); 
				\draw[axisedge] (4,-0.5) -- (4,15); 
				\clip (-0.5,-0.5) rectangle (13.5,13.5);
				\draw (-2,-2)grid(15,15);
				\begin{scope}[xshift=0cm,yshift=0cm]
					\foreach \x in {0,...,3}{
						\foreach \y in {0,...,3}{
							\begin{scope}[xshift=4*\x cm,yshift=4*\y cm]
								\draw (0,0)node[vtx]{};
								\draw (1,0)node[vtx]{};
								\draw (0,1)node[vtx]{};
								\draw (-1,-1)node[vtx]{};
								\draw (1,1) node[gsvtx]{};
								\draw (2,2) node[gsvtx]{};
								\draw (3,2) node[gsvtx]{};
								\draw (2,3) node[gsvtx]{};
							\end{scope}
					}}
				\end{scope}
			\end{tikzpicture}
			\caption{%
				Adding a yellow diamond point to the top left picture will
				generate all yellow diamond points, given in the bottom left
				picture, and so on for each color/shape.  Thus, the only
				I-stable proper supersets of $\Triangle$ are the three in the second
				row, which are unimodular transformations of $\Square$.
			}%
			\label{fig:Int_Stable_with_UniTri}
		\end{figure}
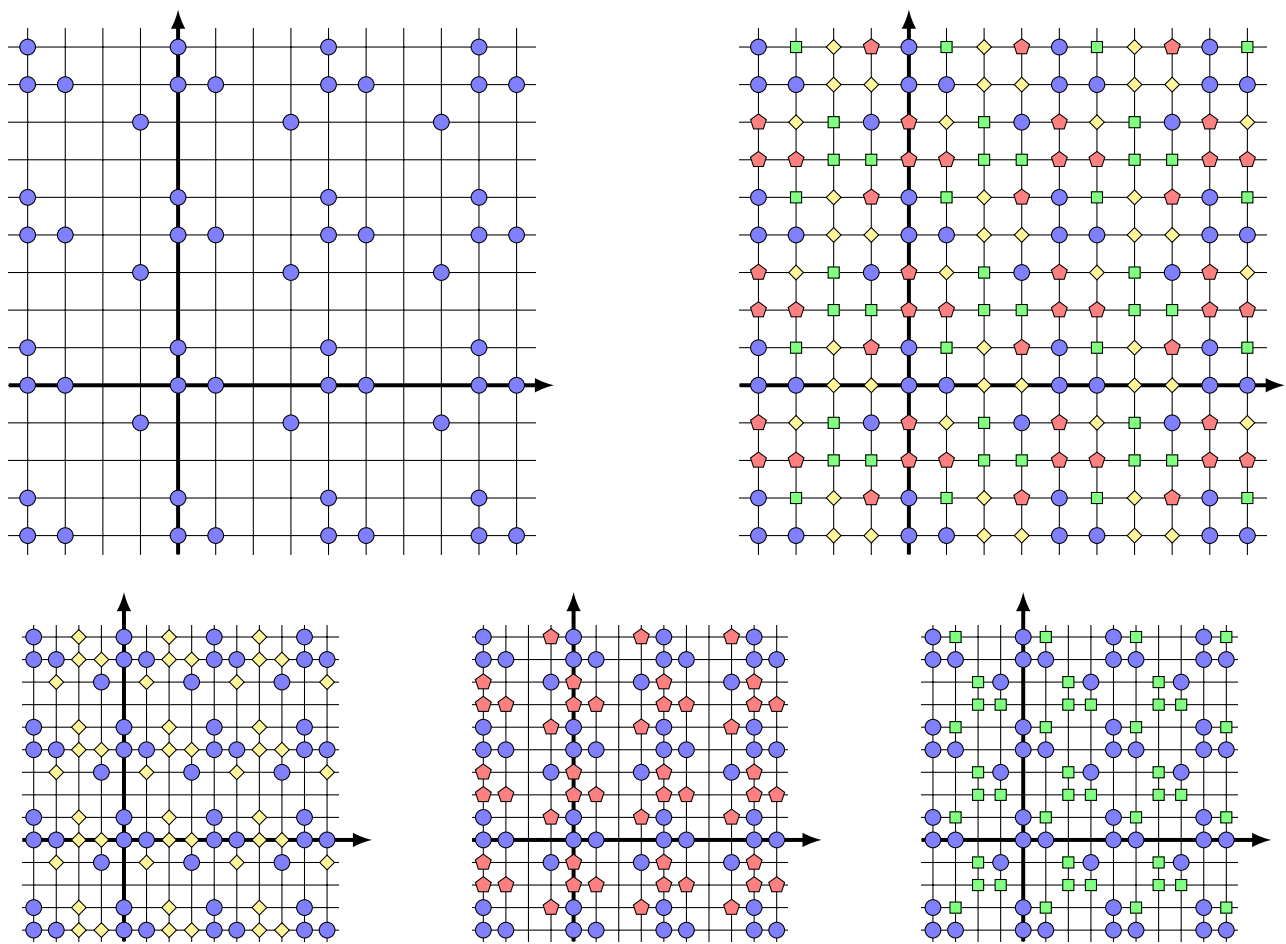
		
		\textbf{Case I:} $S$ contains $(1,3)$.  If $(1,3) \in S$, then $S$
		contains $\{(0,4),(1,3),(0,5),(1,4)\}$, which is the image of
		$\{(0,0),(1,0),(0,1),(1,1)\}$ under a unimodular transformation.
		This is the bottom left picture in
		\cref{fig:Int_Stable_with_UniTri}.
		
		\textbf{Case II:} $S$ contains $(1,2)$, $(0,2)$ or $(0,3)$.  If
		$(1,2) \in S$, then since $(0,1), (3,3) \in S$, we have $(0,2) \in
		S$.  If $(0,2) \in S$, then since $(1,0), (-1,3) \in S$, we have
		$(0,3) \in S$.  If $(0,3) \in S$, then $S$ contains
		$\{(-1,3),(0,3),(0,4),(1,4)\}$, which is the image of
		$\{(0,0),(1,0),(0,1),(1,1)\}$ under a unimodular transformation.
		This is the bottom middle picture in
		\cref{fig:Int_Stable_with_UniTri}.
		
		\textbf{Case III:} $S$ contains $(2,3)$, $(2,2)$, or $(1,1)$.  If
		$(2,3) \in S$, then since $(0,1), (3,3) \in S$, we have $(2,2) \in
		S$.  If $(2,2) \in S$, then since $(0,1), (1,0) \in S$, we have
		$(1,1) \in S$.  If $(1,1) \in S$, then $S$ contains
		$\{(0,0),(1,0),(0,1),(1,1)\}$.  This is the bottom right picture in
		\cref{fig:Int_Stable_with_UniTri}.
		
		In any of these cases, $S$ must contain a unimodular transformation
		of $\Square$ by \cref{lem:2X2Construction}.
		
		Finally, we show that if $S$ is an I-stable set containing
		$\Square$ and also some point not in $\Square$, then $S=\Z^2$.  Up
		to translation and rotation, we may assume that $(2,0) \in S$.
		Then
		\begin{align*}
			(1,0),\, (2,-1),\, (2,0) \in S & \qquad\limp\qquad (3,1) \in S, \\
			(1,0),\, (2,2),\, (3,1)  \in S & \qquad\limp\qquad (2,1) \in S, \\
			(2,-1),\, (4,0),\, (3,1)  \in S & \qquad\limp\qquad (3,0) \in S.
		\end{align*}
		Thus, since $S$ contains $\{(2,0),(3,0),(2,1),(3,1)\}$, we have
		that $S$ contains $\Square + (2,0) = \Z^2 \setminus \Square$.  We
		conclude that $S = \Z^2$ by \cref{lem:2X2Construction}.
	\end{proof}
	
	We summarize these structural observations in the following corollary.
	\begin{cor}\label{cor:structure-internal}
		A set $S \subsetneq \mathbb Z^2$ is I-stable if and only if we have
		one of the following:
		\begin{enumerate}[label=(\alph*)]
			\item
			$S$ is a unimodular transformation of $\Triangle$;
			
			\item
			$S$ is a unimodular transformation of $\Square$;
			
			\item
			$S$ contains at most 2 points from every internal triangle.
		\end{enumerate}
	\end{cor}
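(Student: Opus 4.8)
The statement is essentially a repackaging of \cref{cor:X2IntExtremal}, so the plan is to assemble the pieces already in hand and treat the two directions of the equivalence separately. The only genuinely new input is an elementary structural observation about internal triangles, recorded at the end.

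For the backward direction, I would check that each of the three conditions produces an I-stable proper subset. Conditions~(a) and~(b) follow from the I-stability of $\Triangle$ and $\Square$ established in \cref{prop:X1IntStable,prop:X2IntStable}, together with the fact (noted just after the definition of unimodular transformation) that unimodular transformations send internal triangles to internal triangles and are bijections of $\Z^2$; hence they preserve I-stability, and a unimodular image of $\Triangle$ or $\Square$ is again I-stable. Condition~(c) is immediate from the definition: if $S$ meets every internal triangle in at most two points, then no internal triangle can have exactly three of its points in $S$, so $S$ is I-stable.

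For the forward direction, let $S \subsetneq \Z^2$ be I-stable, and split into cases according to whether $S$ contains a unimodular triangle. If it does, then the concluding sentence of \cref{cor:X2IntExtremal} tells us that $S$ is a unimodular transformation of $\Triangle$, a unimodular transformation of $\Square$, or $\Z^2$; since $S$ is a proper subset, the last possibility is excluded, and we land in case~(a) or~(b). If instead $S$ contains no unimodular triangle, I would argue that case~(c) must hold. Suppose not: then some internal triangle $T$ has at least three of its points in $S$. Because $S$ is I-stable, $T$ cannot have exactly three points in $S$, so in fact all four points of $T$ lie in $S$.

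The crux is then the observation already exploited in the proof of \cref{lem:InternalMinimalCongruences}: the four lattice points of an internal triangle are its three vertices and its centroid, and the centroid together with any two of the vertices spans a triangle of area $\tfrac13\cdot\tfrac32=\tfrac12$, that is, a unimodular triangle. Hence $S$ would contain a unimodular triangle, contradicting the case hypothesis, and so~(c) holds. I do not expect any real obstacle here — everything beyond \cref{cor:X2IntExtremal} reduces to this one area computation and routine case bookkeeping, with the main care being only to confirm that the centroid--vertex--vertex triple is genuinely non-collinear and hence a bona fide unimodular triangle.
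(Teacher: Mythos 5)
Your proposal is correct and follows essentially the same route as the paper: the backward direction cites \cref{prop:X1IntStable,prop:X2IntStable} (with (c) immediate), and the forward direction reduces the failure of (c) to the containment of a full internal triangle, hence of a unimodular triangle, at which point \cref{cor:X2IntExtremal} and properness of $S$ finish the argument. The paper states the step ``an internal triangle contains a unimodular triangle'' without proof, whereas you justify it via the centroid--vertex--vertex area computation; this is a welcome elaboration but not a different approach.
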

	
	\begin{proof}
		By \cref{prop:X1IntStable} and \cref{prop:X2IntStable}, $S$ is
		I-stable in cases (a) and (b); case (c) is immediate.
		
		Conversely, if $S \subsetneq \mathbb Z^2$ is I-stable and contains
		at least 3 points from an internal triangle, then $S$ contains an
		internal triangle, and thereby contains a unimodular triangle.  By
		\cref{cor:X2IntExtremal}, $S$ is a unimodular transformation of
		$\Triangle$ or $\Square$.
	\end{proof}
	
	We now prove our main density result for this section, which together
	with \cref{cor:internal-stable-densities} completes the proof of
	\cref{thm:main}(b).  (We recall that (a) was proven in
	\cref{sec:BStableSets}.)
	
	\begin{prop}\label{prop:main-internal}
		If $S \subsetneq \Z^2$ is I-stable, then $\del(S)\le 1/2$.
	\end{prop}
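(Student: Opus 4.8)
The plan is to reduce to the structural classification already in hand. By \cref{cor:structure-internal}, an I-stable proper subset $S \subsetneq \Z^2$ falls into one of three cases: (a) $S$ is a unimodular transformation of $\Triangle$; (b) $S$ is a unimodular transformation of $\Square$; or (c) $S$ meets every internal triangle in at most two points. Cases (a) and (b) will be immediate: by \cref{prop:X1IntStable,prop:X2IntStable} we have $\del(\Triangle) = 1/4$ and $\del(\Square) = 1/2$, and since both $\Triangle = \imin + 4\Z^2$ and $\Square$ are periodic modulo $4\Z^2$ (hence periodic in two independent directions), their densities are preserved under unimodular transformations, as noted in \cref{subsec:DefinitionsAndResults}. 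So in both cases $\del(S) \le 1/2$, and the entire content of the proof is to handle case (c).

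For case (c) I would prove the stronger \emph{local} statement that $S$ contains at most two of the four corners of any unit square $\setof{(a,b),(a+1,b),(a,b+1),(a+1,b+1)}$. The crucial observation is that \emph{any three corners of a unit square are three of the four points of some internal triangle}. Indeed, the three points $(0,0),(1,0),(0,1)$ are three of the four points of $\imin$, which is an internal triangle; and the symmetries of the unit square (rotations about its center and reflections) are unimodular transformations of $\Z^2$, which act transitively on the four triples of corners and which preserve the class of internal triangles. Hence each of the four corner-triples of the square $\setof{0,1}^2$, and so of every translate, lies in an internal triangle. Since in case (c) the set $S$ meets every internal triangle in at most two points, $S$ cannot contain three corners of any unit square, which establishes the claim.

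I would then conclude by the same tiling argument as in \cref{prop:main-border}. Cover $[-n,n]^2$ by $\lceil (2n+1)/2 \rceil^2$ pairwise disjoint translates of the unit square $\setof{0,1}^2$; by the previous paragraph each such translate contains at most two points of $S$, so
\[
\card{S \cap [-n,n]^2} \le 2\left\lceil\frac{2n+1}{2}\right\rceil^2 .
\]
Dividing by $\card{[-n,n]^2} = (2n+1)^2$ and letting $n \to \infty$ yields $\del(S) \le 1/2$.

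The only genuinely new ingredient is the local observation in case (c); everything else is either quoted from \cref{cor:structure-internal,prop:X1IntStable,prop:X2IntStable} or a routine tiling identical in spirit to \cref{prop:main-border}. I expect the one point requiring care to be the verification that three corners of a unit square really do sit inside an internal triangle -- equivalently, that the three corners form a unimodular triangle and that every unimodular triangle is three of the four points of a unimodular image of $\imin$. Once this is pinned down (most cleanly via the transitive action of the square's unimodular symmetry group on its corner-triples), the density bound follows immediately.
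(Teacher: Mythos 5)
Your proof is correct and takes essentially the same approach as the paper: both reduce to the structural classification of I-stable sets containing a unimodular triangle (the paper via \cref{cor:X2IntExtremal}, you via the equivalent \cref{cor:structure-internal}), dispose of $\Triangle$ and $\Square$ directly, and then tile by $2 \times 2$ squares. The only cosmetic difference is in the local step: the paper observes that three corners of a unit square form a unimodular triangle (so a set with no unimodular triangle meets each square in at most two points), while you embed each corner-triple as three of the four points of a unimodular image of the internal triangle $\imin$; the two observations are interchangeable and give the same bound.
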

	
	\begin{proof}
		By \cref{cor:X2IntExtremal}, if $S$ contains a unimodular triangle
		then it is a unimodular transformation of $\Triangle$, $\Square$,
		or $\Z^2$; since $S \neq \Z^2$, we have $\del(S) \leq 1/2$.  If $S$
		does not contain a unimodular triangle, then $S$ intersects every
		$2 \times 2$ square in at most $2$ points.  Tiling $\Z^2$ by $2
		\times 2$ squares, we have $\del(S) \leq 1/2$.
	\end{proof}

	\section{Maximal Stable sets}\label{sec:maximal}
	
	Up to this point we have established a number of structural
	results about stable sets.  \Cref{cor:BorderStableSubsets} states
	that every subset of a B-stable set $S \subsetneq \Z^2$ is
	B-stable and every subset of a BI-stable set $S \subseteq \Z^2$ is
	BI-stable.  Similarly, from \cref{cor:structure-internal}, every
	subset of an I-stable set $S$ is I-stable, unless $S$ is a
	unimodular transformation of $\Triangle$, $\Square$, or $\Z^2$
	(see \cref{fig:Int_Stable_with_UniTri}).  In view of this,
	classifying all stable sets is equivalent to classifying all
	\textit{maximal stable sets}, that is, stable sets $S \subsetneq
	\Z^2$ such that $S$ and $\Z^2$ are the only stable sets containing
	$S$.  In this section we consider some constructions that are
	maximal stable and give some nonconstructive existence results.

	We begin with I-stable sets.  We recall that $I_n \coloneqq \Z \times (n\Z)$ is I-stable for every $n \ne 1,3$ by \cref{lem:internalcolumns}.
	
	\begin{prop}
		Let $I_n \coloneqq \Z \times (n\Z)$. Then $I_n$ is maximal I-stable if and only if either $n=9$ or~$n$~is a prime number other than $3$.
	\end{prop}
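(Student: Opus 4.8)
The plan is to treat the two directions separately, using the structural results \cref{cor:X2IntExtremal,cor:structure-internal} to reduce everything to a finite, number-theoretic question about adjoining a single point to $I_n$.

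First recall from \cref{lem:internalcolumns} that $I_n$ is I-stable whenever $n \neq 1,3$; one checks that $I_3$ is \emph{not} I-stable (for instance $\{(0,0),(1,0),(2,3)\}$ are three points of the internal triangle with centroid $(1,1)$), and $I_1 = \Z^2$ is not a proper subset. For the \textbf{only if} direction I would argue by contrapositive: if $n$ is composite and $n \neq 9$, then $n$ has a divisor $m$ with $1 < m < n$ and $m \neq 3$ (take a prime factor $p \neq 3$ if one exists, and $m = 9$ when $n = 3^k$ with $k \geq 3$). Then $I_n \subsetneq I_m \subsetneq \Z^2$ with $I_m$ I-stable, so $I_n$ is not maximal. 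Together with the cases $n \in \{1,3\}$, this shows maximality forces $n = 9$ or $n$ prime $\neq 3$.

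For the \textbf{if} direction, suppose $n = 9$ or $n$ is a prime $\neq 3$, and let $S$ be I-stable with $I_n \subsetneq S$; I want $S = \Z^2$. The key first step is to show $S$ contains a unimodular triangle. Pick $(0,b) \in S \setminus I_n$ (horizontal translation invariance of $I_n$ lets me assume the $x$-coordinate is $0$, and $1 \le b \le n-1$). I claim $I_n \cup \{(0,b)\}$ already contains three points of some internal triangle. Since every affine unimodular image of $\imin$ is again an internal triangle, it suffices to produce such an image with one vertex at height $b$ and two further points (vertices or centroid) at heights $\equiv 0 \pmod n$: the latter two then lie on full rows of $I_n$, and the height-$b$ vertex can be slid horizontally onto $(0,b)$. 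Taking the linear part with bottom row $(b,n)$ — which completes to a unimodular matrix precisely because $\gcd(b,n)=1$, automatic for every such $b$ when $n$ is prime — realizes heights $0$ (centroid), $b$, $n$, and $-b$, giving the three desired points. The only residues not covered this way are $b \in \{3,6\}$ when $n = 9$, which I would dispatch by the explicit internal triangle $\{(1,0),(2,0),(0,3)\}$ (centroid $(1,1)$) and its vertical reflection onto row $9$ for $b = 6$. In every case $S$, being I-stable, must contain the fourth point, hence all four points of an internal triangle; since the centroid together with two vertices of an internal triangle form a unimodular triangle, $S$ contains a unimodular triangle.

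The second step is to apply \cref{cor:X2IntExtremal}: $S$ is then a unimodular transform of $\Triangle$, $\Square$, or $\Z^2$, and I must exclude the first two using that $I_n$ contains full lattice lines (its rows). A transform of $\Triangle$ contains none, because $\imin \bmod 4\Z^2$ is not a coset of any cyclic subgroup of $(\Z/4)^2$ (it contains both independent differences $(1,0)$ and $(0,1)$), so no line lies entirely in $\Triangle$. For $\Square$, the full lines it does contain (e.g.\ $y=x$ and $x+y=1$) form, in each direction, a single residue class modulo $4$: a pencil cannot contain two such classes, since that would force $\Square$ to be a union of two parallel lines, contradicting that $\Square$ contains the block $\{0,1\}^2$. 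Hence any ``$1$-in-$n$'' family of parallel full lines inside a transform of $\Square$ forces $4 \mid n$. As $4 \nmid n$ for $n = 9$ and for every prime, $S$ is neither a transform of $\Triangle$ nor of $\Square$, so $S = \Z^2$ and $I_n$ is maximal.

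The main obstacle is the first step of the if direction: showing that adjoining \emph{any} off-row point to $I_n$ creates three points of an internal triangle. The clean construction via the bottom row $(b,n)$ works exactly when $\gcd(b,n)=1$, which is where the primality of $n$ enters; it fails precisely at $b \in \{3,6\}$ for $n = 9$, the residues where the factorization of $9$ intrudes and which must be handled by hand. The observation that every unimodular image of $\imin$ is internal is what keeps this tractable, since it removes any need to verify areas or edge-primitivity directly.
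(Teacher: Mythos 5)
Your proof is correct and follows essentially the same route as the paper: rule out composite $n \neq 9$ via $I_n \subsetneq I_m$ for a proper divisor $m \neq 1,3$, use B\'ezout and $\gcd(b,n)=1$ to force a unimodular triangle once a point $(0,b)$ is adjoined (with the same explicit internal triangle handling $b \in \{3,6\}$ when $n=9$), and then invoke \cref{cor:X2IntExtremal} together with $4 \nmid n$ to exclude unimodular transforms of $\Triangle$ and $\Square$. The only cosmetic differences are that the paper exhibits the unimodular triangle $\{(t_1,0),(t_1-t_2,n),(0,b)\}$ directly rather than passing through an internal triangle and completing it by I-stability, and that it merely asserts (with a figure reference) that transforms of $\Triangle$ and $\Square$ cannot contain $I_n$, which you justify more explicitly via the full lattice lines contained in these sets.
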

	
	\begin{proof}
		If $n\ge 2$ is composite and not $9$, then we can write $n=pq$ with $p\neq 1,3$ and $q>1$. As $I_{pq} \subset I_{p}$, we have that $I_n$ is not a maximal I-stable set in this case. Also, $I_3$ is not I-stable (see \cref{fig:Ik_Int_Stable}). It suffices to prove then that $I_n$ is maximal I-stable for prime $n\neq 3$ and for $n=9$.
		
		Consider the set $X$ consisting of $I_n$ with an extra added point, which we may assume to be of the form $(0,a)$ with $1 \leq a \leq n-1$ without loss of generality. We claim that the only I-stable set containing $X$ is $\Z^2$. Let us first assume $n \neq 3$ is a prime number. Since $\gcd(a,p)=1$, there are integers $t_1, t_2>0$ such that $p t_1 - a t_2 = 1$. Hence, the area of the triangle with vertices $(t_1,0), (t_1-t_2,p), (0,a)$ is 
		\[ \frac{1}{2} \det 
		\begin{bmatrix}
			-t_2 & p \\
			-t_1 & a
		\end{bmatrix}
		= \frac{pt_1 -at_2}{2} = \frac{1}{2},
		\]
		which implies that $\setof{(t_1,0), (t_1-t_2,p), (0,a)}$ is a unimodular triangle with all vertices in $X$ (see \cref{fig:I5_I7}).
		Because $n$ is prime and thus not divisible by $4$, every unimodular transformation of $\Triangle$ and $\Square$ fails to contain $I_n$ (see \cref{fig:Int_Stable_with_UniTri} for the only unimodular transformation of $\Triangle$ and the two unimodular transformations of $\Square$ up to rotation and translation), we conclude by \cref{cor:X2IntExtremal} that the only I-stable set containing X is $\Z^2$. 
		
		Let us now assume $n=9$. The previous argument yields the desired result whenever the additional point $(0,a)$ has $\gcd(a,9)=1$. If $a\in \{3,6\}$, $X$ contains 3 points of an internal triangle, which forces a unimodular triangle (see \cref{fig:Ik_Int_Stable}), and we can conclude this case as before.
	\end{proof}
	
	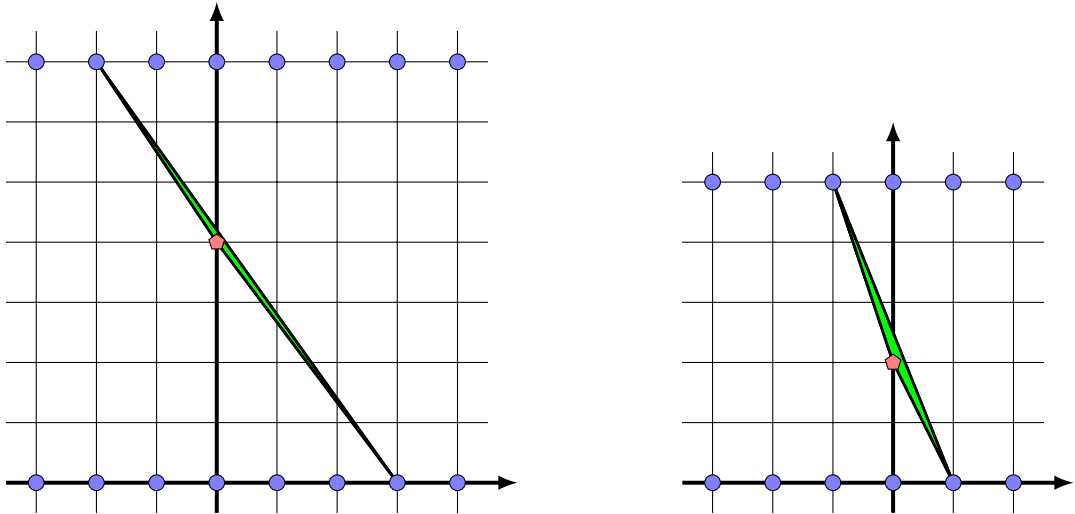
\begin{figure}%[h!]
		\centering
		\begin{tikzpicture}[scale=0.8]
			\draw[axisedge] (-0.5,0) -- (8,0); 
			\draw[axisedge] (3,-0.5) -- (3,8); 
			\clip (-0.5,-0.5) rectangle (7.5,7.5);
			
			\draw (-1,-1)grid(16,16);
			\begin{scope}[xshift=0cm,yshift=0cm]
				\draw 
				(6,0)coordinate (a)
				(1,7)coordinate (b)
				(3,4)coordinate (c)
				;
				\xtriangle{a}{b}{c}
				\draw (3,4) node[gs2vtx]{};
				\foreach \x in {0,...,9}{
					\foreach \y in {0,1}{
						\draw (\x,7*\y)node[vtx]{};
					}
				}
			\end{scope}
		\end{tikzpicture}
		\hspace{2cm}% 
		\begin{tikzpicture}[scale=0.8]
			\draw[axisedge] (-0.5,0) -- (6,0); 
			\draw[axisedge] (3,-0.5) -- (3,6); 
			\clip (-0.5,-0.5) rectangle (5.5,5.5);
			\draw (-1,-1)grid(16,16);
			\begin{scope}[xshift=0cm,yshift=0cm]
				\draw 
				(4,0)coordinate (a)
				(2,5)coordinate (b)
				(3,2)coordinate (c)
				;
				\xtriangle{a}{b}{c}
				\draw (3,2) node[gs2vtx]{};
				\foreach \x in {0,...,9}{
					\foreach \y in {0,...,3}{
						\draw (\x,5*\y)node[vtx]{};
					}
				}
			\end{scope}
		\end{tikzpicture}
		\caption{The sets $I_7 \cup \{(0,4)\}$ and $I_5 \cup \{(0,2)\}$ contain a unimodular triangle.}
		\label{fig:I5_I7}
	\end{figure}
	
	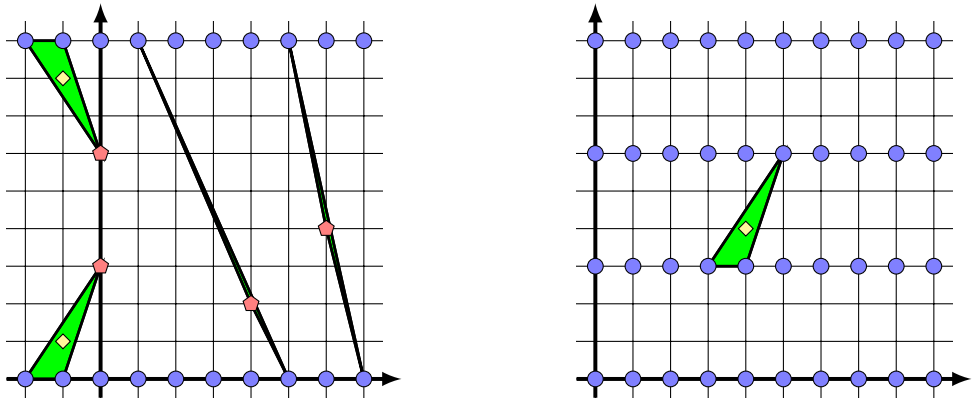
\begin{figure}%[h!]
		\centering
		\begin{tikzpicture}[scale=0.5]
			\draw[axisedge] (-0.5,0) -- (10,0); 
			\draw[axisedge] (2,-0.5) -- (2,10); 
			\clip (-0.5,-0.5) rectangle (9.5,9.5);
			\draw (-1,-1)grid(16,16);
			\begin{scope}[xshift=0cm,yshift=0cm]
				\draw 
				(0,0)coordinate (a)
				(1,0)coordinate (b)
				(2,3)coordinate (c)
				;
				\xtriangle{a}{b}{c}
				\draw (2,3) node[gs2vtx]{};
				\draw (1,1) node[gs3vtx]{};
				
				\draw 
				(0,9)coordinate (a)
				(1,9)coordinate (b)
				(2,6)coordinate (c)
				;
				\xtriangle{a}{b}{c}
				\draw (2,6) node[gs2vtx]{};
				\draw (1,8) node[gs3vtx]{};
				
				\draw 
				(3,9)coordinate (a)
				(7,0)coordinate (b)
				(6,2)coordinate (c)
				;
				\xtriangle{a}{b}{c}
				\draw (6,2) node[gs2vtx]{};
				
				\draw 
				(7,9)coordinate (a)
				(8,4)coordinate (b)
				(9,0)coordinate (c)
				;
				\xtriangle{a}{b}{c}
				\draw (8,4) node[gs2vtx]{};
				
				\foreach \x in {0,...,9}{
					\foreach \y in {0,1}{
						\draw (\x,9*\y)node[vtx]{};
					}
				}
			\end{scope}
		\end{tikzpicture}
		\hspace{2cm} % 
		\begin{tikzpicture}[scale=0.5]
			\draw[axisedge] (-0.5,0) -- (10,0); 
			\draw[axisedge] (0,-0.5) -- (0,10);
			\clip (-0.5,-0.5) rectangle (9.5,9.5);
			\draw (-1,-1)grid(16,16);
			\begin{scope}[xshift=0cm,yshift=3cm]
				\draw 
				(3,0)coordinate (a)
				(4,0)coordinate (b)
				(5,3)coordinate (c)
				;
				\xtriangle{a}{b}{c}
				\draw (4,1) node[gs3vtx]{};
				\foreach \x in {0,...,9}{
					\foreach \y in {-1,0,1,2}{
						\draw (\x,3*\y)node[vtx]{};
					}
				}
			\end{scope}
		\end{tikzpicture}
		\caption{$I_9$ is maximal I-stable, but $I_3$ is not I-stable. The addition of the red pentagonal points to $I_9$ either creates a unimodular triangle or the three vertices of an internal triangle, which forces a unimodular triangle.}
		\label{fig:Ik_Int_Stable}
	\end{figure}
	
	We recall $S_{2/9} \deftobe \setof{(0,0), (1,0)} + 3\Z^2$ from \cref{const:2/9}.
	\begin{prop}
		The set $S_{2/9}$ is maximal B-stable, maximal I-stable, and maximal BI-stable.
	\end{prop}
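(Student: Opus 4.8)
The plan is to leverage the stability of $S_{2/9}$ already proved in \cref{prop:S_2/9} together with the rigidity statements \cref{thm:StableSetsContainingConfigurations} and \cref{cor:X2IntExtremal}. Since $S_{2/9}$ is $3\Z^2$-periodic and invariant under the unimodular reflections $(x,y)\mapsto(1-x,y)$ and $(x,y)\mapsto(x,-y)$, which act on residues modulo $3\Z^2$ by $(a,b)\mapsto(1-a,b)$ and $(a,b)\mapsto(a,-b)$, I first reduce the work. Any point outside $S_{2/9}$ has one of the seven residues in $\setof{0,1,2}^2\setminus\setof{(0,0),(1,0)}$, and these fall into just three orbits under these symmetries, with representatives $(2,0)$, $(0,1)$, and $(2,1)$. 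Up to a symmetry of $S_{2/9}$ it therefore suffices to show that adjoining each of these three points forces the whole grid.

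For B- and BI-maximality I will exhibit, in each case, three points of a border triangle inside $S_{2/9}\cup\setof{p}$. For $p=(2,0)$ the points $(0,0),(1,0),(2,0)$ are three consecutive collinear lattice points, hence three of the four points of the border triangle $\setof{(0,0),(1,0),(2,0),(0,1)}$. For $p=(0,1)$ and $p=(2,1)$ the triples $(0,0),(1,0),(0,1)$ and $(0,0),(1,0),(2,1)$ are unimodular triangles (their defining determinant is $\pm1$), and a unimodular triangle is in particular three points of a border triangle, which is itself a minimal triangle. Thus any B-stable proper superset of $S_{2/9}$ must equal $\Z^2$ by part (a) of \cref{thm:StableSetsContainingConfigurations}, and any BI-stable proper superset must equal $\Z^2$ by part (b); hence nothing lies strictly between $S_{2/9}$ and $\Z^2$ in either sense.

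For I-maximality the same configurations produce a unimodular triangle in any I-stable superset. When $p=(0,1)$ or $p=(2,1)$ a unimodular triangle is present outright. When $p=(2,0)$, instead the points $(1,0),(2,0),(0,3)$ are the three vertices of the internal triangle $\setof{(1,0),(2,0),(0,3),(1,1)}$ (area $3/2$, interior point $(1,1)$), so I-stability forces $(1,1)$ and thereby the unimodular triangle $(1,0),(2,0),(1,1)$. Consequently every I-stable proper superset $S'$ of $S_{2/9}\cup\setof{p}$ contains a unimodular triangle, so by \cref{cor:X2IntExtremal} it is a unimodular transformation of $\Triangle$, $\Square$, or $\Z^2$.

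The main obstacle is ruling out $S'=\psi(\Triangle)$ and $S'=\psi(\Square)$ for an affine unimodular $\psi$, and here density is useless since $\del(S_{2/9})=2/9<1/4=\del(\Triangle)$. I will instead count residues modulo $4\Z^2$. Writing $\psi(x)=Nx+s$ with $N\in GL_2(\Z)$, the sublattice $3\Z^2\subset S_{2/9}$ maps into $s+3N\Z^2\subseteq S'$. Because $3\equiv-1\pmod 4$ and $N$ is invertible modulo $4$ (its determinant is a unit), the single coset family $s+3N\Z^2$ already meets all $16$ residue classes modulo $4\Z^2$. But $\Triangle=\imin+4\Z^2$ occupies only $4$ classes modulo $4\Z^2$, and $\Square$ only $8$; neither can contain $S'\supseteq\psi(S_{2/9})$. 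This contradiction leaves $S'=\Z^2$, so $S_{2/9}$ is maximal I-stable as well, completing the proof alongside the border-triangle argument above.
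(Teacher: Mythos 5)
Your proof is correct and follows the same overall skeleton as the paper's: show that adjoining any point to $S_{2/9}$ creates three points of a border triangle (giving B- and BI-maximality via \cref{thm:StableSetsContainingConfigurations}, i.e.\ \cref{prop:unimodularTriangle}) and forces a unimodular triangle in any I-stable superset, so that \cref{cor:X2IntExtremal} reduces I-maximality to excluding unimodular images of $\Triangle$ and $\Square$. Where you genuinely depart from the paper is in the final exclusion step and in the level of explicitness. The paper disposes of $\psi(\Triangle)$ and $\psi(\Square)$ by pointing at \cref{fig:Int_Stable_with_UniTri} and asserting that none of these sets contains $S_{2/9}$; your residue count modulo $4\Z^2$ --- any affine unimodular image of the sublattice $3\Z^2$ meets all $16$ classes because $3$ is a unit mod $4$, while $\Triangle$ and $\Square$ occupy only $4$ and $8$ classes --- is a self-contained, checkable replacement for that figure-based assertion and is the most valuable new ingredient in your write-up. (One small caution there: you push $3\Z^2$ forward by $\psi$ and write $s+3N\Z^2\subseteq S'$, whereas the containment actually available is $S_{2/9}\subseteq S'=\psi(\Triangle)$, so the count should be run on $\psi^{-1}(3\Z^2)=3\Z^2-N^{-1}s$ inside $\Triangle$; since a unimodular map carries $3\Z^2$ to a coset of $3\Z^2$ in either direction, the conclusion is unaffected, but the direction of the map should be fixed.) Your symmetry reduction to the three residues $(2,0)$, $(0,1)$, $(2,1)$, and the explicit internal triangle $\setof{(1,0),(2,0),(0,3),(1,1)}$ handling the $(2,0)$ case, make precise exactly what the paper delegates to \cref{fig:2/9_Maximal}, where the same triangle appears; the paper's alternative observation that this case produces five collinear points is interchangeable with your three-consecutive-points border triangle.
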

	
	\begin{proof}
		By \cref{prop:S_2/9}, we know that $S_{2/9}$ is BI-stable.
		Let $X$ be the set obtained after adding one extra point to $S_{2/9}$.
		Then either $X$ contains a unimodular triangle or has 5 points in a row.
		By \cref{prop:unimodularTriangle}, the only B-stable set containing $X$ is $\Z^2$. Therefore, $S_{2/9}$ is maximal B-stable and maximal BI-stable.
		
		Furthermore, every I-stable set containing $X$ also contains a unimodular triangle (see \cref{fig:2/9_Maximal}).
		As every unimodular transformation of $\Triangle$ and $\Square$ fails to contain the set $S_{2/9}$ (see \cref{fig:Int_Stable_with_UniTri} for the only unimodular transformation of $\Triangle$ and the two unimodular transformations of $\Square$ up to rotation and translation), we conclude by \cref{cor:X2IntExtremal} that the only I-stable set containing $X$ is $\Z^2$. 
	\end{proof}
	
	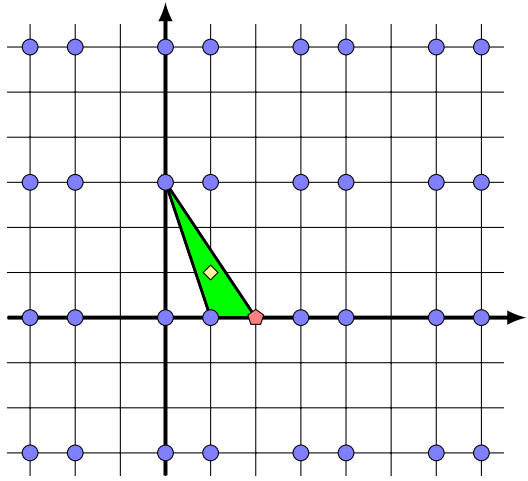
\begin{figure}[h!]
		\centering
		\begin{tikzpicture}[scale=0.6]
			\draw[axisedge] (-0.5,3) -- (11,3); 
			\draw[axisedge] (3,-0.5) -- (3,10); 
			\clip (-0.5,-0.5) rectangle (10.5,9.5);
			\draw (-1,-1)grid(16,16);
			\begin{scope}[xshift=0cm,yshift=0cm]
				\draw 
				(4,3)coordinate (a)
				(5,3)coordinate (b)
				(3,6)coordinate (c)
				;
				\xtriangle{a}{b}{c}
				\draw (5,3) node[gs2vtx]{};
				\draw (4,4) node[gs3vtx]{};
				
				\foreach \x in {0,...,4}{
					\foreach \y in {0,...,4}{
						\draw (3*\x,3*\y)node[vtx]{};
						\draw (3*\x+1,3*\y)node[vtx]{};
					}
				}
			\end{scope}
		\end{tikzpicture}
		\caption{The set $S_{2/9}$ is a maximal I-stable set. The addition of the yellow diamond point creates a unimodular triangle, and the addition of the red pentagonal point forces such a yellow diamond point.}
		\label{fig:2/9_Maximal}
	\end{figure}
	
	All of the constructions we considered in this paper are \textit{periodic}, i.e.\ of the form $X+k\Z^2$ for some finite set $X$ and positive integer $k$. Although we have not explicitly constructed a (maximal) aperiodic stable set, it follows from Zorn's Lemma that such sets must exist as well. For this, we need a simple lemma.
	
	\begin{prop}\label{prop:zorns}
		Let $S \subsetneq \mathbb Z^2$ be B/I/BI-stable. Then there exists a maximal B/I/BI-stable set (of the same stability type as $S$) that contains $S$.
	\end{prop}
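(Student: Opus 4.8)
The plan is to apply Zorn's Lemma to the poset $\mathcal{P}$ consisting of all B/I/BI-stable proper subsets of $\Z^2$ that contain $S$, ordered by inclusion. Since $S$ itself lies in $\mathcal{P}$, the poset is nonempty, and any maximal element $M$ of $\mathcal{P}$ is automatically a maximal stable set of the desired type containing $S$: any stable proper subset strictly containing $M$ would still contain $S$ and hence lie in $\mathcal{P}$, contradicting the maximality of $M$. So it suffices to verify the chain hypothesis, namely that every chain $\mathcal{C} = \setof{S_\alpha}$ in $\mathcal{P}$ has an upper bound in $\mathcal{P}$. The natural candidate is the union $U \deftobe \bigcup_\alpha S_\alpha$, and the work is to show that $U$ is (i) stable of the correct type and (ii) still a proper subset of $\Z^2$.

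Stability of $U$ follows from the locality of the defining condition. If some border/internal/minimal triangle $T$ had exactly three of its points in $U$ but not its fourth, then those three points, being finitely many elements of a union of a chain, would all lie in a single member $S_\beta$ (take the largest of the three members witnessing their membership, using that $\mathcal{C}$ is totally ordered). That member would then contain exactly three points of $T$, contradicting the stability of $S_\beta$. Hence $U$ is stable of the same type.

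For the B- and BI-stable cases, properness of $U$ is immediate from \cref{prop:unimodularTriangle}: each $S_\alpha$, being a proper B- (resp. BI-) stable set, meets every border (resp. minimal) triangle in at most two points, and the same chain argument shows $U$ inherits this property. Since $\Z^2$ contains all four points of every such triangle, we get $U \neq \Z^2$, so $U \in \mathcal{P}$ is the required upper bound.

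The main obstacle is the I-stable case, because I-stability is \emph{not} equivalent to the local ``at most two points per internal triangle'' condition; the exceptional sets $\Triangle$ and $\Square$ contain unimodular triangles. Here I would split on whether any member of the chain contains a unimodular triangle. If none does, then by case (c) of \cref{cor:structure-internal} every $S_\alpha$ meets each internal triangle in at most two points, and as before $U$ does too, giving $U \neq \Z^2$. If some $S_\beta$ contains a unimodular triangle, then by \cref{cor:X2IntExtremal} it is a unimodular transformation of $\Triangle$ or of $\Square$; using parts (b) and (c) of \cref{cor:X2IntExtremal}, any proper I-stable set strictly containing $S_\beta$ must be a transform of $\Square$, and strictly containing a transform of $\Square$ forces $\Z^2$. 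Thus the part of the chain lying above $S_\beta$ terminates after at most one further step at a transform of $\Square$, so $U$ equals a unimodular transform of $\Triangle$ or $\Square$ and in particular is proper. In every case $U \in \mathcal{P}$ is an upper bound, so Zorn's Lemma yields a maximal element, completing the proof.
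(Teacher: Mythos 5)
Your proof is correct and follows essentially the same route as the paper: apply Zorn's Lemma to the poset of proper stable supersets of $S$, take the union of a chain, and use the fact that any finitely many points of the union lie in a single chain member to transfer stability. The only divergence is the verification that the union is proper, where the paper short-circuits your case analysis (the split on unimodular triangles and the chain of transforms of $\Triangle$ and $\Square$) by noting that if the union were $\Z^2$, the same finiteness argument would trap all of $[4]^2$ inside a single chain member, forcing that member to equal $\Z^2$ by \cref{cor:structure-internal} (or \cref{prop:unimodularTriangle} in the B/BI cases).
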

	
	\begin{proof}
		We give the proof for I-stable sets. The reasoning is nearly identical for B-stable and BI-stable sets.
		
		Let $\Sigma$ be the collection of all I-stable \textit{proper} subsets of $\mathbb Z^2$ containing $S$. Then $\Sigma$ is nonempty since $S \in \Sigma$. Let $\mathcal{C} \subseteq \Sigma$ be a chain with respect to inclusion, and let $\overline{S} = \bigcup_{T \in \mathcal C} T$.
		
		We claim that $\overline{S}$ is I-stable. Suppose it is not. Then there exist four distinct points $x_1, x_2, x_3, y \in \mathbb Z^2$ forming an internal triangle, with $x_1, x_2, x_3 \in \overline{S}$ and $y \notin \overline{S}$. For each $i = 1,2,3$, let $T_i \in \mathcal C$ contain $x_i$. Since $\mathcal C$ is a chain, we may assume $T_1 \subseteq T_2 \subseteq T_3$ without loss of generality, which means that $x_1, x_2, x_3 \in T_3$. But $y$ cannot be in $T_3$ since $y \notin \overline{S}$, contradicting I-stability of $T_3$.
		
		Next, we show that $\overline{S}$ is a proper subset of $\mathbb 
		Z^2$. If not, then by the same reasoning as in the previous 
		paragraph, some $T \in \mathcal C$ must contain every point in 
		$[4]^2$. Since $T$ is I-stable, we have that $T = \mathbb Z^2$ 
		by \cref{cor:structure-internal} (or \cref{prop:unimodularTriangle} for B-stable and BI-stable sets), contradicting that $T$ is a proper subset of $\mathbb Z^2$.
		
		Thus, $\overline{S} \in \Sigma$ is an upper bound of $\mathcal C$. By Zorn's lemma, $\Sigma$ contains a maximal element.
	\end{proof}
	
	Now we can now give a nonconstructive proof that there exist maximal aperiodic stable sets.
	
	\begin{prop}\label{prop:Iaperiodic}
		There exists a maximal I-stable set $S \sub \Z^2$ that is not periodic. 
	\end{prop}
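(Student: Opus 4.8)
The plan is to reduce the statement to constructing a single I-stable set $S_0 \sub \Z^2$ with the following property: $S_0$ meets every coset of $k\Z^2$ for every integer $k \ge 1$. Granting such an $S_0$, \cref{prop:zorns} extends it to a maximal I-stable set $M \supseteq S_0$, and I claim $M$ cannot be periodic. Indeed, suppose $M = X + k\Z^2$ for a finite set $X$ and some $k \ge 1$. Since $M$ is a proper subset of $\Z^2$ (as every maximal I-stable set is, by definition), $M$ is a union of strictly fewer than all $k^2$ cosets of $k\Z^2$, so $M$ is disjoint from some coset $c + k\Z^2$. Then $S_0 \sub M$ would also miss that coset, contradicting the defining property of $S_0$. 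Thus the whole task is to build $S_0$.

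To build $S_0$ I would proceed greedily. Enumerate the countably many requirements $(k,c)$, where $k \ge 1$ and $c$ ranges over the $k^2$ cosets of $k\Z^2$, and maintain a finite I-stable set grown one point at a time. At the stage handling $(k,c)$, if the current set already meets $c + k\Z^2$ there is nothing to do; otherwise I add a single new point $p \in c + k\Z^2$. The invariant to preserve is that the current set meets every internal triangle in at most two points: by \cref{cor:structure-internal} this is equivalent to I-stability for a proper subset, and by \cref{cor:X2IntExtremal} it automatically rules out any unimodular triangle (recall that a unimodular triangle always occurs as three of the four points of an internal triangle, namely two of its vertices together with its centroid). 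Since every internal triangle is a finite configuration, the union $S_0$ of all stages inherits the same bound, hence is I-stable, and by construction it meets every coset of every $k\Z^2$.

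It remains to show that a legal choice of $p$ always exists, which is the crux. Adding $p$ can create a third point of an internal triangle only together with two points $q_1, q_2$ already chosen, so I must avoid all $p$ for which $\setof{q_1, q_2, p}$ are three of the four points of some internal triangle. For each fixed pair $\setof{q_1, q_2}$ this forbidden locus is one-dimensional: prescribing the signed area to be $3/2$ or $1/2$ pins $p$ to finitely many lines parallel to $q_1 q_2$, or to a conic in the case where one of $q_1, q_2$ must serve as the centroid. This is exactly where the ``thin triangle'' phenomenon must be controlled---two far-apart chosen points can still be two points of a very thin internal triangle---but the minimality of internal triangles confines the completions to these finitely many curves. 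Since only finitely many pairs have been chosen at any stage, the total forbidden locus is a finite union of one-dimensional sets, which meets the two-dimensional infinite coset $c + k\Z^2$ in a set of density zero; hence admissible choices of $p$ remain and the construction never gets stuck. I expect the main technical point to be precisely this one-dimensionality of the forbidden locus (and the consequence that it cannot exhaust a coset); the rest is bookkeeping.
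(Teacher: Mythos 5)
Your argument is correct, but it takes a genuinely different route from the paper's. The paper starts from the concrete set $S_0=\setof{(x,0)\st x\ge 10}\cup\setof{(0,y)\st y\ge 10}$, which is I-stable because every triangle with vertices in it has area at least $5$; after extending to a maximal I-stable $S$ via \cref{prop:zorns}, periodicity is excluded structurally: a period would force both full coordinate axes into $S$, hence $(1,1)\in S$, hence $\Square\sub S$ by \cref{lem:2X2Construction}, contradicting the maximality of $\Square$ from \cref{cor:X2IntExtremal}. You instead diagonalize against every period at once by making $S_0$ meet every coset of $k\Z^2$ for every $k$, so that no proper periodic superset can contain it. Your key step is sound: any three points of an internal triangle are non-collinear and span a triangle of area $1/2$ or $3/2$, so for a fixed pair $\setof{q_1,q_2}$ every candidate $p$ satisfies $\card{\det(q_2-q_1,\,p-q_1)}\in\setof{1,3}$ and hence lies on one of four lines parallel to the line through $q_1$ and $q_2$; a finite union of lines cannot cover a two-dimensional coset, so the greedy construction never stalls, and the invariant passes to the increasing union because each internal triangle is finite. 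Two harmless slips: the forbidden locus is a union of lines in \emph{every} case (the ``centroid'' subcases also reduce to the determinant condition above, so no conic arises), and ``at most two points of every internal triangle'' is sufficient for I-stability but not equivalent to it even for proper subsets --- $\Triangle$ and $\Square$ are the exceptions listed in \cref{cor:structure-internal} --- though you only use the direction you need. As for what each approach buys: the paper's proof is shorter but leans on the classification of I-stable sets containing unimodular triangles, whereas yours needs only \cref{prop:zorns} and the area dichotomy, transfers essentially verbatim to the B- and BI-stable settings of \cref{prop:aperodic} (replacing internal triangles by border or minimal ones), and produces a maximal stable set that is aperiodic for the stronger reason of meeting every coset of every sublattice $k\Z^2$.
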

	\begin{proof}
		Let $S_0 = \{(x,0) \st x \ge 10 \} \cup \{(0,y) \st y \ge 10 \}$. The set $S_0$ is I-stable since any triangle with vertices in $S_0$ has area at least $5$. By \cref{prop:zorns}, $S_0$ is contained in a maximal I-stable set $S$. Suppose that $S$ is periodic. Then the intersection of $S$ with the $x$-axis $\setof{(x, 0) \st x \in \Z}$ is periodic. Since the infinite ray $\setof{(x, 0) \st x \in \Z,\, x \ge 10}$ is in this periodic intersection, all of $\setof{(x, 0) \st x \in \Z}$ is in $S$. Likewise, $S$ contains the entire $y$-axis $\setof{(0, y) \st y \in \Z}$. Thus, $S$ contains the three points $(0,0), (-1,0), (0,-1)$ and hence, by I-stability, $(1,1) \in S$. Now $\{(0,0), (1,0), (0,1), (1,1)\} \subseteq S$, so by \cref{lem:2X2Construction}, $S$ contains $\Square$. But $\Square$ is maximal I-stable by \cref{cor:X2IntExtremal} and does not contain the set $\{(x,0): x \in \Z \} \cup \{(0,y): y \in \Z \} \subseteq S$, which is a contradiction. Thus $S$ is an aperiodic maximal I-stable set.
	\end{proof}
	
	\begin{prop}\label{prop:aperodic}
		There exists a maximal B-stable set $S \sub \Z^2$ that is not periodic. There also exists a maximal BI-stable set $S \sub \Z^2$ that is not periodic.
	\end{prop}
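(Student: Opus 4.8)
The plan is to follow the template of \cref{prop:Iaperiodic}: build an explicit stable seed $S_0 \subsetneq \Z^2$, extend it to a maximal stable set via \cref{prop:zorns}, and rule out periodicity by hand. The essential new difficulty is that the trick used for $I$-stable sets is \emph{unavailable} here: there, dense rays work because a periodic set containing a half-line must contain the whole line, and dense rays are $I$-stable. But a B-stable set can never contain three consecutive collinear points (these are three points of a border triangle, so by \cref{prop:unimodularTriangle} they force the whole grid), so no B-stable set is dense on a line, and periodicity does \emph{not} propagate a half-line to a full line. My idea is to exploit this rigidity in reverse: I plant, for every candidate period $m$, a far-away ``blocker'' forcing a specific translate to be unaddable, thereby destroying $(m,0)$-invariance in every stable superset at once.

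Concretely, I would work entirely on the $x$-axis, so that no two-dimensional triangle ever arises. Choose rapidly increasing integers $d_{1} < d_{2} < \cdots$ (for instance $d_{k+1} = d_{k} + k + 10$) and set
\begin{equation*}
	A \deftobe \bigcup_{k \ge 1} \setof{d_{k} - 1 - k,\ d_{k},\ d_{k} + 1}, \qquad S_{0} \deftobe A \times \setof{0}.
\end{equation*}
By the spacing, the only consecutive pair inside the $k$-th block is $\setof{d_{k}, d_{k}+1}$, the lone point $d_{k}-1-k$ is isolated, and distinct blocks are separated by gaps of length at least $7$; hence $A$ contains no three consecutive integers. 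Since $S_{0}$ lies on a single line, a border triangle meets it only in collinear points (three consecutive at most, hence at most two in $A$) and an internal triangle meets any line in at most two of its four points; so $S_{0}$ contains at most two points of every minimal triangle and is therefore BI-stable (hence B-stable) by \cref{prop:unimodularTriangle}.

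The blocker mechanism then finishes the proof. For each $k$, having $d_{k}, d_{k}+1 \in A$ makes the point $(d_{k}-1,0)$ \emph{forbidden}: any proper B-stable (or BI-stable) superset of $S_{0}$ containing it would contain the three consecutive points $(d_{k}-1,0),(d_{k},0),(d_{k}+1,0)$, i.e.\ three points of a border triangle, contradicting \cref{prop:unimodularTriangle}. Put $p_{k} \deftobe (d_{k}-1-k,\ 0) \in S_{0}$, so that $p_{k} + (m,0) = (d_{k}-1,0)$ exactly when $m=k$. Now extend $S_{0}$ to a maximal B-stable set $S$, and separately to a maximal BI-stable set, using \cref{prop:zorns}; each such $S$ is a proper subset of $\Z^{2}$. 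If $S$ were periodic, say $S = X + m\Z^{2}$, it would be invariant under translation by $(m,0)$, forcing $p_{m} + (m,0) = (d_{m}-1,0) \in S$ — impossible, since $S$ is proper and contains $(d_{m},0),(d_{m}+1,0)$. Hence $S$ is invariant under no $(m,0)$ and is aperiodic, which proves both statements.

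The one point needing care — and the main obstacle — is precisely the simultaneous coexistence of all the blockers: I must check that planting one gadget per period $m$ never itself produces three-in-a-row (handled by the fast spacing of the $d_{k}$) and never produces a genuine two-dimensional triangle (handled automatically by keeping every point on a single line). Once this is verified, the B/BI case, which looked harder than the I-case because full lines cannot be forced, actually becomes routine: the prohibition on three consecutive points is exactly the lever that destroys every translation symmetry.
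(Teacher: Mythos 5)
Your argument is correct, and it shares the paper's overall skeleton: a seed set supported on the $x$-axis with no three consecutive points (hence BI-stable, since a line meets a border triangle in at most three points only when they are consecutive, and meets an internal triangle in at most two), extension to a maximal stable set via \cref{prop:zorns}, and the rigidity fact that three consecutive collinear points in a proper B-stable set are impossible by \cref{prop:unimodularTriangle}. Where you genuinely diverge is the mechanism for excluding periodicity. The paper builds a one-dimensional set $X$ that is itself aperiodic and, crucially, \emph{saturated}: adding any missing integer creates three consecutive ones (constructed from the binary expansion of an irrational). Saturation pins down $S \cap (\Z \times \{0\}) = X$ exactly, and aperiodicity of $S$ is then inherited from aperiodicity of $X$. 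You instead diagonalize over candidate periods: for each $m$ you plant a witness $p_m = (d_m-1-m,0)$ in the seed whose translate $p_m+(m,0)=(d_m-1,0)$ is permanently forbidden, because $(d_m,0)$ and $(d_m+1,0)$ already lie in the seed and a proper B-stable set cannot acquire three in a row. This dispenses with both the saturation condition and any need for the seed to be aperiodic, and it requires no control over what the maximal set looks like on the axis beyond one forbidden point per period; the trade-off is that the paper's route yields more structural information (it realizes any saturated aperiodic one-dimensional pattern as the exact axis-trace of a maximal stable set), while yours is shorter and more self-contained. Your spacing checks do go through ($d_{k+1}=d_k+k+10$ separates blocks by gaps of length $7$, the only consecutive pair in block $k$ is $\{d_k,d_k+1\}$, and $d_m-1\notin A$ since $m\ge 1$), and the final step is sound because any set of the form $X+m\Z^2$ is invariant under translation by $(m,0)$, which is all the paper's definition of periodicity requires.
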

	\begin{proof}
		Let $X \sub \Z$ be any set of integers satisfying the following conditions:
		\begin{enumerate}
			\item[(1)] $X$ contains no three consecutive integers,
			\item[(2)] $X \cup \{x\}$ contains three consecutive integers for all $x \in \Z \setminus X$, and 
			\item[(3)] $X$ is not periodic.
		\end{enumerate}
		Such a set can be constructed as follows.  Let $0.b_1b_2\cdots$ be the binary expansion of any irrational number in $(0,1)$.  Define $x_0 \deftobe 1$ and iteratively $x_{i+1} \deftobe x_i+2+b_i$.  Take $X' \deftobe \{x_i\}_{i \ge 0}\cup \{x_i+1\}_{i \st b_i=1}$ and set $X \deftobe X' \cup (-X')$.  It is straightforward to check that this set has the desired properties.
		
		Let $S_0 \deftobe X \times \{0\} \sub \Z^2$.  By Condition (1), 
		$S_0$ intersects each border triangle in at most two points, so 
		it is a B-stable proper subset of $\Z^2$.  By \cref{prop:zorns}, 
		there exists some $S \supseteq S_0$ that is maximal B-stable.  
		Observe that $S \cap (\Z\times \{0\}) = X$, as otherwise $S$ would contain three consecutive integers by Condition (2), which by \cref{prop:unimodularTriangle} would imply $S = \Z^2$.  Finally, $S$ is not periodic because $X$ is not periodic by Condition (3).
		
		For a maximal BI-stable set, we apply \cref{prop:zorns} to obtain $S \supseteq S_0$ that is maximal BI-stable, and the rest of the argument is unchanged.
	\end{proof}

	\section{Concluding Remarks and Maximal Stable Sets}\label{sec:ConcludingRemarks}
	In this paper we defined $S \sub \Z^2$ to be B/I/BI-stable if there exists no border/internal/minimal triangle (respectively) on four points that intersects $S$ in exactly three points.  There are many ways one could generalize these notions.  When considering the number-theoretic motivation, work in $\Z^2$ is related to degree-$3$ number fields.  To handle higher-degree fields, it is natural to consider a notion of stability with respect to minimal simplices in~$\Z^d$.  However, the classification of simplices in $\Z^d$ containing $d+2$ points is considerably more complicated when $d \ge 3$ than when $d=2$. For example, there exist infinitely many tetrahedra in $\Z^{3}$, not equivalent up to unimodular transformation, containing exactly five points, including eight tetrahedra containing one interior point \cite[Table~1]{BlaSan2016}.  
	
	As discussed in \cref{sec:maximal}, understanding stable sets is equivalent to understanding maximal stable sets.  As such, many of our open problems center around such sets.  For example, we showed that any B-stable set $S \subsetneq \Z^2$ has $\del(S) \le 1/4$, and that this is tight by considering $S = 2\Z^2$.  The following asks if this is essentially the only such set with this property.
	\begin{quest}\label{quest:borderUnique}
		Does there exist a set $S \subsetneq \Z^2$ that is maximal B-stable with $\del(S)=1/4$ and that is not equal to a unimodular transformation of $2\Z^2$?
	\end{quest}
	A negative answer to this question would also imply a negative answer to the analogous question for BI-stable sets.  Here the condition that $S$ be maximal is essential, as otherwise one can just consider $2\Z^2$ after deleting a point.  A related question is the following.
	\begin{quest}\label{quest:conscutive}
		What is the largest value of $\del(S)$  for B-stable $S\subsetneq \Z^2$ if $S$ contains $(0,0)$ and $(1,0)$?
	\end{quest}
	The set $S_{2/9}$ shows that $\del(S)=2/9$ is achievable, and \cref{prop:main-border} implies $\del(S)\le 1/4$.  If this latter bound were tight, then this would imply a positive answer to \cref{quest:borderUnique}, and any proof showing $\del(S)=1/4$ is not possible might give insight into proving that \cref{quest:borderUnique} has a negative answer.
	
	It would be interesting if one could construct a stable set $S$ with positive density that is not periodic. 
	\begin{quest}\label{quest:aperiodic}
		Does there exist an aperiodic set $S \sub \Z^2$ with $\del(S)>0$ that is maximal B/I/BI-stable?
	\end{quest}
	We note that if either $\del(S)=0$ or $S$ is periodic, then $\del(\phi(S))=\del(S)$ for any unimodular transformation $\phi$.  As such, a negative answer to this question would imply that $\del(S)$ is an ``intrinsically geometric'' property of maximal stable sets.  The maximality condition in \cref{quest:aperiodic} is critical, as $2\Z^2$ minus a point trivially satisfies all of the other conditions.  Similarly, \cref{prop:Iaperiodic,prop:aperodic} show that we can find such sets if one drops the requirement $\del(S)>0$. 
	
	\section{Acknowledgements}
	This work was started at the 2022 Graduate Research Workshop in Combinatorics, which was supported in part by NSF grant 1953985 and a generous award from the Combinatorics Foundation. We thank the referees for their careful reading and useful comments that improved the presentation of this manuscript.

	\bibliographystyle{abbrvurl}
	\bibliography{refs.bib}

	\newpage 
	\appendix
	\section{Computer-assisted proofs}
	\label{app:code}
	
	In this appendix, we discuss the Python code for a program that proves \cref{lem:6x6} and \cref{lem:consec16}, and classifies the sets considered in \cref{rem:6x12}. This code can be found\locationofprograms 
	
	The code enumerates all B-stable subsets of a given set of points using a depth-first search. First we set up some helpful functions, including the main recursive step. Then we give the three applications. 
	\cref{lem:6x6} states every B-stable set $S \subsetneq \Z^2$ has $|S \cap ([6]\times [6])|\le 9$ and \cref{rem:6x12} describes all B-stable sets $S \subseteq [6]^2$ with $\Gamma(S) \neq \emptyset$ and $|S|=9$ up to rotation and reflection; the code completes this calculation in only a few seconds on a personal computer. \cref{lem:consec16} states that every B-stable set $S \subsetneq \Z^2$ with $\Gamma(S) \cap ([6]\times [12]) \neq \emptyset$ has $|S \cap ([6]\times [12])|\le 16$; this takes longer to calculate, about 6 minutes on the same hardware.
	
	\begin{figure}[h!]
		\centering
		\begin{tikzpicture}[scale=0.8]
			\begin{scope}[xshift=0cm,yshift=0 cm]
				\clip (-0.5,-0.5) rectangle (5.5,5.5);
				\draw (-1,-1)grid(6,6);
				
				\foreach \x in {(0,5), (1,5), (3,5), (5,5), (0,2), (5,2), (1, 0), (3,0), (5,0)} {    
					\draw \x node[vtx]{};
				}
			\end{scope}
			\begin{scope}[xshift=7cm,yshift=0 cm]
				\clip (-0.5,-0.5) rectangle (5.5,5.5);
				\draw (-1,-1)grid(6,6);
				
				\foreach \x in {(0,5), (2,5), (3,5), (5,5), (0,2), (2,2), (5, 2), (0,0), (5,0)} {    
					\draw \x node[vtx]{};
				}
			\end{scope}
			\begin{scope}[xshift=14cm,yshift=0 cm]
				\clip (-0.5,-0.5) rectangle (5.5,5.5);
				\draw (-1,-1)grid(6,6);
				
				\foreach \x in {(3,0), (2,5), (3,5), (5,5), (0,2), (2,2), (5, 2), (0,0), (5,0)} {    
					\draw \x node[vtx]{};
				}
			\end{scope}
		\end{tikzpicture}
		\vspace{0.5cm}
		
		\begin{tikzpicture}[scale=0.8]
			\begin{scope}[xshift=0cm,yshift=0 cm]
				\clip (-0.5,-0.5) rectangle (5.5,5.5);
				\draw (-1,-1)grid(6,6);
				
				\foreach \x in {(0,5), (2,5), (3,5), (5,5), (0,2), (2,0), (5, 2), (0,0), (4,0)} {    
					\draw \x node[vtx]{};
				}
			\end{scope}
			
			\begin{scope}[xshift=8cm,yshift=0 cm]
				\clip (-0.5,-0.5) rectangle (5.5,5.5);
				\draw (-1,-1)grid(6,6);
				
				\foreach \x in {(0,5), (2,5), (3,5), (5,5), (0,2), (2,0), (5, 2), (0,0), (5,0)} {    
					\draw \x node[vtx]{};
				}
			\end{scope}
		\end{tikzpicture}
		\caption{All B-stable sets $S \subset [6]^2$ with $\Gamma(S) \neq \emptyset$ and $|S|=9$, up to reflection and rotation.}
		\label{fig:9pts}
	\end{figure}
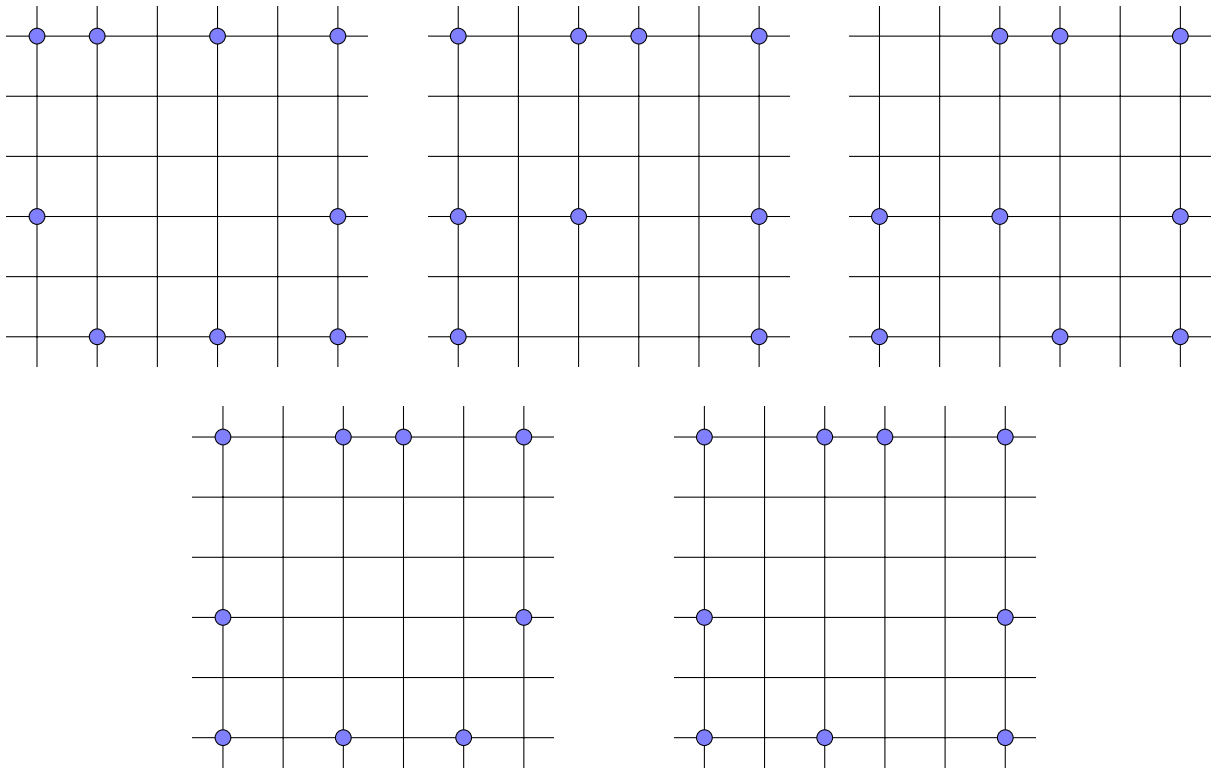

\newpage
\section{Computer-free proof of \cref{lem:6x6}}
\label{app:6x6bound}

In this Appendix, we prove \cref{lem:6x6}, namely that every B-stable set $S \subsetneq \Z^2$ intersects $[6]^2$ in at most 9 points, without computer aid. A proof along similar lines should be possible for \cref{lem:consec16}, but we do not provide one here.

First, we show that it is enough to prove the following.

\begin{lem} \label{lem:6x6app1}
	Let $S \subsetneq \Z^2$ be a B-stable set. If $S \cap [6]^2$ contains a pair of points $(x,y)$ and $(x+1,y)$, then $|S \cap [6]^2| \le 9$.
\end{lem}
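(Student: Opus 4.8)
The plan is to recast B-stability as a statement about triangle areas and then observe that a single horizontal pair of points annihilates whole nearby rows. First I would record the following reformulation of the border-triangle characterization in \cref{prop:unimodularTriangle}: a B-stable set $S\subsetneq\Z^2$ contains no three points that are consecutive on a line, and no three points spanning a triangle of area $1/2$ or $1$. Indeed, by Pick's formula every lattice triangle of area $1$ has exactly four lattice points and so is a border triangle, so its three vertices already violate B-stability; a unimodular (area-$1/2$) triple $\{P,Q,R\}$ is three of the four points of the border triangle $\{P,\,2Q-P,\,R\}$ obtained by doubling an edge; and three consecutive collinear points are the two endpoints and the edge midpoint of a border triangle. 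So all three configurations are forbidden.

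The key consequence is a ``row-killing'' principle. If $(x,y),(x+1,y)\in S$, then any third point $(c,r)$ together with this pair spans a triangle whose base is the unit segment from $(x,y)$ to $(x+1,y)$ and whose height is $|r-y|$, hence of area $|r-y|/2$, depending only on the vertical distance. Thus $r=y\pm1$ yields area $1/2$ and $r=y\pm2$ yields area $1$, both forbidden by the previous paragraph. Therefore the rows $y-2,y-1,y+1,y+2$ contain no point of $S$ at all; by the same computation applied to a vertical pair, two vertically consecutive points of $S$ empty the four neighbouring columns.

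Using the up--down reflection of $[6]^2$ I may assume the given pair lies in row $y\in\{1,2,3\}$, and using the left--right reflection that its left endpoint has $x\in\{1,2,3\}$. If $y=3$, the killed rows are $1,2,4,5$, so $S$ meets only rows $3$ and $6$ of $[6]^2$; each single row has no three consecutive points and hence at most $4$ points, giving $|S\cap[6]^2|\le 8$. If $y=2$, the killed rows are $1,3,4$, leaving rows $2,5,6$; bounding row $2$ by $4$ and the adjacent strip $\{1,\dots,6\}\times\{5,6\}$ by $4$ through \cref{lem:weak-border} (cover it by two translates of $[3]\times[2]$), I again obtain $|S\cap[6]^2|\le 8$.

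The remaining case $y=1$ (equivalently $y=6$) is the main obstacle. Here rows $2,3$ are killed but rows $1,4,5,6$ all survive, and row $1$ carries the pair. The trouble is that rows $4,5,6$ in isolation can hold six points (for instance the part of $2\Z^2$ they contain), so the single-row and \cref{lem:weak-border} bounds give only $|S\cap[6]^2|\le 4+8$; one must instead exploit the interaction between the boundary pair and the far band. I would split on whether rows $4,5,6$ contain a horizontal pair: if so, that pair kills two of these three rows, leaving at most $4$ points there and at most $8$ in total; if not, I would use the forbidden area-$1$ and area-$1/2$ triangles formed by $(x,1),(x+1,1)$ together with pairs of band points to show that a six-point band is incompatible with the row-$1$ pair, forcing $|S\cap(\{1,\dots,6\}\times\{4,5,6\})|\le 5$ and hence $|S\cap[6]^2|\le 9$. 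This cross-row verification, made finite by the fixed value of $x\in\{1,2,3\}$ and the exact thresholds $\{1/2,1\}$ on the area, is the crux of the argument and the step most likely to require a careful enumeration of the few surviving column patterns (or a short computer check paralleling the one for \cref{lem:6x6}).
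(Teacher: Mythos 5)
Your setup is sound and, for the cases it completes, it follows essentially the same route as the paper: the reformulation of B-stability via \cref{prop:unimodularTriangle} into forbidden consecutive collinear triples and forbidden triples of area $1/2$ or $1$ is correct, the ``row-killing'' computation (a horizontal unit pair empties the two rows above and the two rows below) is exactly the paper's first step, and your disposal of the cases $y\in\{2,3,4,5\}$ and of the sub-case where the far band itself contains a horizontal pair are all valid and give $\le 8$.

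However, there is a genuine gap at precisely the point you flag as the crux, and it is not a small one: the entire difficulty of \cref{lem:6x6app1} lives in the case $y\in\{1,6\}$ with a horizontal-pair-free band, where the naive count gives $4+6=10$ and one must shave off a single point. You assert that the area-$1/2$ and area-$1$ triangles formed by the pair $(x,1),(x+1,1)$ together with pairs of band points force the band down to $5$ points, but you neither enumerate the surviving band configurations nor exhibit the forbidden triangles, and you explicitly offer to fall back on ``a short computer check'' --- which would defeat the purpose of this lemma, whose role in the paper is to make \cref{lem:6x6} computer-free. Moreover, the specific sufficient condition you propose (that the row-$1$ pair \emph{alone} is incompatible with any $6$-point band) is unproven and likely stronger than what actually holds: the paper's argument reduces to ``top row has exactly $4$ points \emph{and} the band has exactly $6$,'' and then runs a $15$-case analysis on the middle $2\times 3$ window (\cref{fig:15cases}) that uses the positions of \emph{all four} top-row points (which, after the $\le 10$ reduction, are pinned down to three explicit patterns as in \cref{fig:6x6_1st_row}), not just the adjacent pair. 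Without carrying out that finite verification --- or proving your stronger pair-only claim for every $x$ and every admissible band --- the bound $\le 9$ is not established.
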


\begin{lem} \label{lem:6x6app2}
	If $S\subsetneq \Z^2$ is B-stable, $S \cap [6]^2$ has no pair of points at distance $1$ and $S \cap [6]^2$ contains two points at distance $\sqrt{2}$, then $|S \cap [6]^2|\le 9$.
\end{lem}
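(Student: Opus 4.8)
The plan is to combine the symmetry of the window $[6]^2$, a ``forbidden diagonal'' lemma extracted from \cref{prop:unimodularTriangle}, and a finite case analysis. First I would record that, by \cref{prop:unimodularTriangle}, B-stability is equivalent to $S$ containing at most two points of any border triangle, and that by Pick's formula every triangle of area $1$ is a border triangle; thus it suffices to know that $S$ contains no three vertices of an area-$1$ triangle (together with the analogous statement for the collinear ``endpoint--midpoint--endpoint'' configurations, which are also triples of a border triangle). The symmetry group of $[6]^2$ contains the reflection across the main diagonal, the reflection across the anti-diagonal, and the $180^\circ$ rotation, all of which fix the family of lines $y=x+k$; using these I may assume the given pair at distance $\sqrt2$ is $\{(a,a+c),(a+1,a+1+c)\}$ with $c\ge 0$.

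Next I would prove the main tool: \emph{a pair at distance $\sqrt2$ on the line $y=x+c$ forbids every point of $S$ on the four lines $y=x+c\pm1$ and $y=x+c\pm2$, as well as the two neighboring points of the pair on its own line.} Each forbidden point is exhibited as a third point of an explicit border triangle containing the pair, checked by a single determinant. Taking the pair to be $(0,0),(1,1)$: any $R$ on $y=x\pm2$ makes $(0,0),(1,1),R$ an area-$1$ triangle, so $R$ is forbidden (offsets $\pm2$); for every $t$ the set $\{(0,0),(1,1),(t,t+1),(-t,-t-1)\}$ is a border triangle with edge-midpoint $(0,0)$ and determinant $2$, so every $(t,t+1)$ on $y=x+1$ is forbidden, and symmetrically for $y=x-1$; and $(-1,-1),(0,0),(1,1)$ and $(0,0),(1,1),(2,2)$ are three-term arithmetic progressions with primitive common difference, forbidding the two diagonal neighbors.

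With this in hand I would reduce the count. The occupied lines of slope $1$ are the central line $y=x+c$ together with lines at offset at least $3$, so $S\cap[6]^2$ meets only the central diagonal and at most two ``far clusters'' of consecutive diagonals. On the central diagonal the ``no three-term progression'' constraint (plus the two forbidden neighbors) caps the count at $4$. For the far clusters I would use three ingredients: the hypothesis that $S\cap[6]^2$ has no pair at distance $1$, which constrains points on adjacent far diagonals; the fact that any new pair at distance $\sqrt2$ inside a cluster --- in \emph{either} diagonal direction --- empties its neighboring diagonals by the forbidden-diagonal lemma; and the additional area-$1$ triangles linking a central point to two far points (for instance $(2,2),(1,4),(1,6)$, whose determinant is $-2$). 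Summing the resulting per-region bounds over the finitely many admissible positions of the pair gives $\lvert S\cap[6]^2\rvert\le 9$.

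The hard part will be this last step. The forbidden-diagonal lemma alone only removes four diagonals and leaves as many as eighteen candidate cells, and a naive per-cluster count permits ten points; it is precisely the cross-triangles spanning the central diagonal and a far cluster, together with the two-direction interaction of the $\sqrt2$-pairs and the no-distance-$1$ hypothesis, that force the bound down to $9$. Organizing this finite but genuinely case-heavy bookkeeping --- and in particular verifying it uniformly across all positions of the pair, including the degenerate placements near a corner of $[6]^2$ --- is the main obstacle, and is presumably why the authors verified the companion bounds by computer.
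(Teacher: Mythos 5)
Your setup is sound, and your ``forbidden-diagonal'' lemma is both correct and exactly the engine the paper's proof runs on: a pair at distance $\sqrt{2}$ on a diagonal eliminates the four diagonals at offsets $\pm 1,\pm 2$ and the two neighbours on its own diagonal, via the determinant checks you describe (these are precisely the red crosses in \cref{fig:maindiagonal,fig:1maindiagonal,fig:2maindiagonal,fig:3maindiagonal}). The symmetry reduction to a slope-one pair is also fine. But the proposal stops exactly where the proof begins. As you yourself concede, the forbidden-diagonal lemma leaves as many as eighteen candidate cells and a naive count gives $10$, so the entire burden falls on the ``summing the per-region bounds'' step, which you do not carry out and for which you exhibit only a single sample cross-triangle. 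That step is not routine bookkeeping: the paper devotes a separate structural result (\cref{claim:appendix}) to it, bounding the number of points in a triangular half of $[k]^2$ for $k=3,4,5$ \emph{and} classifying the configurations attaining equality for $k=3$ and $k=5$. It is these equality classifications, not the numerical bounds alone, that close the main-diagonal and second-diagonal cases (a corner triangle realizing its maximum forces specific points, which then generate border triangles back into the central diagonal); and the fourth-diagonal case requires a further window decomposition with its own two-part subanalysis depending on whether the opposite fourth diagonal is also occupied. None of this is derivable from the statement of your lemma alone, so the proof as written has a genuine gap.

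A secondary concern is that your organization into ``the central diagonal plus two far clusters'' does not interact uniformly with the geometry of $[6]^2$: the sizes and shapes of the far clusters change drastically with the offset of the pair's diagonal (a pair on a fourth diagonal leaves only one nonempty cluster, while a pair on the main diagonal leaves two large triangular ones), and the cross-triangles needed are different in each regime. The paper's proof is accordingly split by the offset $d\in\{0,1,2,3,4\}$ of the carrying diagonal, with a genuinely different argument for each value, and uses the observation that one may assume \emph{every} $\sqrt{2}$-pair lies on a fourth (anti)diagonal once the cases $d\le 3$ are dispatched. So your approach is the right one in outline, but the case analysis that constitutes essentially all of the work is asserted rather than performed, and filling it in would require reproducing something equivalent to \cref{claim:appendix} together with its equality cases.
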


\begin{proof}[Proof of \cref{lem:6x6}]
	Assume that $S \subsetneq \Z^2$ is a B-stable set and $|S \cap [6]^2| > 9$. Then, by \cref{lem:6x6app1} applied to $S$ and a 90-degree rotation of $S$, we have that $S \cap [6]^2$ contains no pair of points at distance 1. Thus, by \cref{lem:6x6app2}, $S \cap [6]^2$ contains no pair of points at distance $\sqrt{2}$.
	This implies that every translate of $[2]\times [2]$ intersects $S$ in at most one point, contradicting the fact that $|S \cap [6]^2| > 9$.
\end{proof}

\subsection{Proof of \cref{lem:6x6app1}}
Let $T = S \cap [6]^2$.
If such $T$ contains a pair of horizontally consecutive points in the 2nd, 3rd, 4th, or 5th rows of $[6]^2$, then $T$ contains at most 8 points (see \cref{fig:6x6_middle_rows}). We may assume this is not the case, and by symmetry that $T$ also contains no pair of vertically consecutive points in the middle four columns.

\begin{figure}[h!]
	\centering
	\begin{tikzpicture}[scale=0.8]
		\draw (0,0)grid(5,5);
		
		\begin{scope}[xshift=0cm,yshift=4cm]
			\draw
			(0,0)coordinate (a1)
			(1,0)coordinate (a2)
			(2,0)coordinate (a3)
			;
			\foreach \x in {1,2}{
				\draw (a\x)node[vtx]{};
			}
		\end{scope}
		
		\foreach \y in {2,3,5}{
			\begin{scope}[xshift=0cm,yshift=\y cm]
				\foreach \x in {0,...,5}{
					\draw (\x,0)node[novtx]{};
				}
			\end{scope}
		}
		
		\foreach \x in {0,3}{
			\begin{scope}[xshift=\x cm,yshift=0cm]
				\draw 
				(0,1)coordinate (a)
				(2,0)coordinate (b)
				;
				\xwindow{a}{b}{2}
			\end{scope}
		}
		
		\begin{scope}[xshift=3 cm,yshift=4cm]
			\draw 
			(-1,0)coordinate (a)
			(2,0)coordinate (b)
			;
			\xwindow{a}{b}{2}
		\end{scope}
		
	\end{tikzpicture}
	\caption{Possible sets in which $S \cap [6]^2$ contains a pair of points at distance $1$ in the 2nd, 3rd, 4th, or 5th row. Points marked with a red X cannot be included in $S$, or else $S=\mathbb{Z}$.}
	\label{fig:6x6_middle_rows}
\end{figure}
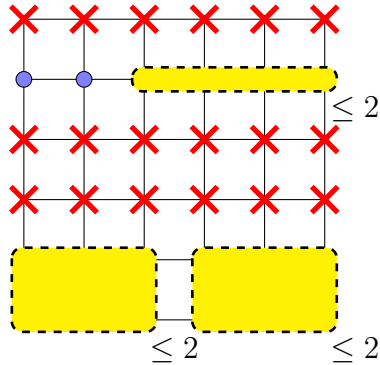

Now without loss of generality, the top row of $[6]^2$ contains a consecutive pair from $T$. Then as we can see from \cref{fig:6x6_1st_row}, $T$ contains at most 4 points from row 6, no points from rows 5 and 4, and at most 6 points total from rows 1, 2, and 3, and hence $|T| \le 10$. Thus either $|T|\leq 9$ already, or the top row must contain exactly 4 points of $T$, and in particular, we can assume without loss of generality that either $(1,6), (2,6) \in T$, $(5, 6), (6,6) \in T$, or $(1,6), (3,6), (4,6), (6,6) \in T$.

\begin{figure}[h!] 
	\centering
	\begin{tikzpicture}[scale=0.8]
		\draw (0,0)grid(5,5);
		
		\begin{scope}[xshift=0cm,yshift=5cm]
			\draw
			(0,0)coordinate (a1)
			(1,0)coordinate (a2)
			(2,0)coordinate (a3)
			;
			\foreach \x in {1,2}{
				\draw (a\x)node[vtx]{};
			}
			\foreach \x in {3}{
				\draw (a\x)node[novtx]{};
			}
		\end{scope}
		
		\begin{scope}[xshift=0cm,yshift=3cm]
			\foreach \x in {0,...,5}{
				\foreach \y in {0,1}
				\draw (\x,\y)node[novtx]{};
			}
		\end{scope}
		
		\foreach \x in {0,2,4}{
			\begin{scope}[xshift=\x cm,yshift=0cm]
				\draw 
				(0,2)coordinate (a)
				(1,0)coordinate (b)
				;
				\xwindow{a}{b}{2}
			\end{scope}
		}
		
		\begin{scope}[xshift=3cm,yshift=5cm]
			\draw 
			(0,0)coordinate (a)
			(2,0)coordinate (b)
			;
			\xwindow{a}{b}{2}
		\end{scope}
		
	\end{tikzpicture}
	\hskip 4em
	\begin{tikzpicture}[scale=0.8]
		\draw (0,0)grid(5,5);
		
		\begin{scope}[xshift=2cm,yshift=5cm]
			\draw
			(0,0)coordinate (a1)
			(1,0)coordinate (a2)
			(3,0)coordinate (a3)
			(-2,0)coordinate (a4)
			;
			\draw (-2,0)node[vtx]{};
			\draw (0,0)node[vtx]{};
			\draw (1,0)node[vtx]{};
			\draw (3,0)node[vtx]{};
			\draw (-1,0)node[novtx]{};
			\draw (2,0)node[novtx]{};
		\end{scope}
		
		\foreach \y in {3,4}{
			\begin{scope}[xshift=0cm,yshift=\y cm]
				\foreach \x in {0,...,5}{
					\draw (\x,0)node[novtx]{};
				}
			\end{scope}
		}
		
		\foreach \x in {0,2,4}{
			\begin{scope}[xshift=\x cm,yshift=0cm]
				\draw 
				(0,2)coordinate (a)
				(1,0)coordinate (b)
				;
				\xwindow{a}{b}{2}
			\end{scope}
		}
		
	\end{tikzpicture}
	\caption{Possible sets with two points at distance 1 on the 1st row of a $6\times 6$ window.}
	\label{fig:6x6_1st_row}
\end{figure}
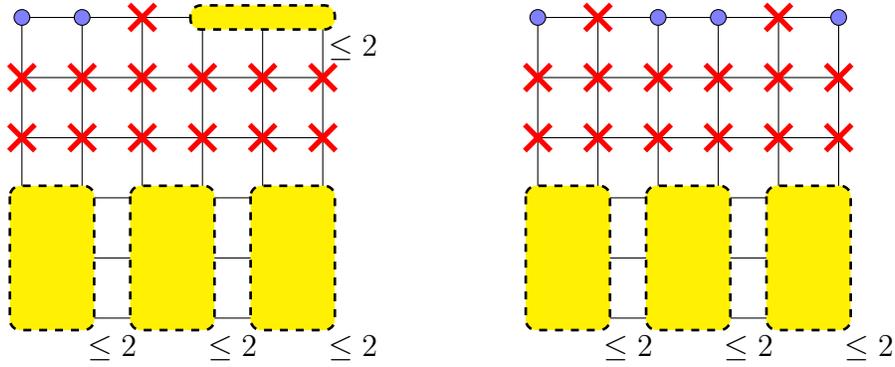

We will break these into further cases depending on which 2 points appear in the `middle $2\times 3$ window' with coordinates $\{3,4\} \times \{1,2,3\}$, see \cref{fig:15cases} for all 15 possibilities. 

\begin{figure}[h!] 
	\centering
	\begin{tikzpicture}[scale=0.8]
		\foreach \i in {0,3} {
			\foreach \j[evaluate={\k=int(\i+\j);}] in {0,1,2} {
				\coordinate (A\k) at (\i/3, \j);
			}
		}
		
		\edef\shiftera{0}
		\edef\shifterb{0}
		
		\newcounter{step}
		\setcounter{step}{1}
		
		\foreach \i[evaluate={\jstart=int(\i+1);}] in {0,...,4} {
			\foreach \j in {\jstart,...,5} {
				\foreach \k in {0,...,5} {
					\coordinate (B\k) at ($(A\k)+(\shiftera,-\shifterb)$);
				}
				
				\draw ($(B0)-(0.01,0)$)grid(B5);
				
				\foreach \k in {0,...,5}{
					\ifthenelse{\NOT \equal{\k}{\i} \AND \NOT \equal{\k}{\j}}{\draw (B\k) node[novtx]{};}{}
				}
				
				\draw (B\i)node[vtx]{};
				\draw (B\j)node[vtx]{};
				
				\draw ($(B0)+(0.5,-0.3)$) node[below]{Case \thestep};
				\addtocounter{step}{1}
				
				\pgfmathparse{\shiftera+3}
				\xdef\shiftera{\pgfmathresult}
				\ifthenelse{\i=0 \AND \j=5} {
					\pgfmathparse{\shifterb+4}
					\xdef\shifterb{\pgfmathresult}
					\xdef\shiftera{0}
				}{}
				\ifthenelse{\i=2 \AND \j=3} {
					\pgfmathparse{\shifterb+4}
					\xdef\shifterb{\pgfmathresult}
					\xdef\shiftera{0}
				}{}
			}
		}
	\end{tikzpicture}
	\caption{All possible configuration of 2 points in a $2\times 3$ window.}
	\label{fig:15cases}
\end{figure}
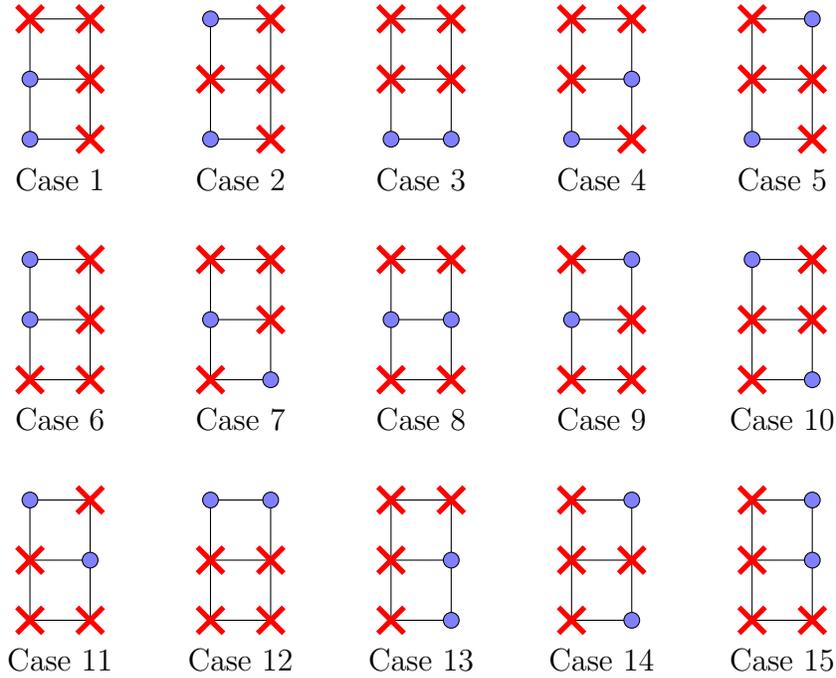

From now on, we will refer to each possibility as the label included in \cref{fig:15cases}. By our assumption on consecutive points in middle rows and columns, we can immediately rule out cases 1, 6, 8, 12, 13, and 15. Moreover in case 3, regardless of the configuration in the top row, we can have at most 4 points in the last three rows for a total of at most $8$.

The configuration $(1,6), (3,6), (4,6), (6,6) \in T$ is then relatively straightforward.  In cases 5, 9, 10, and 11, $T$ has three points of a unimodular triangle. In cases 2, 4, 7, and 14, $T$ has three points of a border triangle. 

Finally, we handle the configuration $(1,6), (2,6) \in T$, which by symmetry, covers the configuration $(5, 6), (6,6) \in T$ as well.
In cases 10 and 11, there are three points of a unimodular triangle. In cases 2 and 7, there are three points of a border triangle. We deal with the remaining cases of 4, 5, 9, and 14 in the following. 

In cases 4, 5, and 9, we rule out new points that will create either unimodular or border triangle with the points in $T$. A quick check shows that $T$ has at most 9 points in these cases, as seen in the following. 

\begin{center}
	\begin{tikzpicture}[scale=0.8]
		\draw (0,0)grid(5,5);
		
		\begin{scope}[xshift=0cm,yshift=5cm]
			\draw
			(0,0)coordinate (a1)
			(1,0)coordinate (a2)
			(2,0)coordinate (a3)
			;
			\foreach \x in {1,2}{
				\draw (a\x)node[vtx]{};
			}
			\foreach \x in {3}{
				\draw (a\x)node[novtx]{};
			}
		\end{scope}
		
		\begin{scope}[xshift=0cm,yshift=0cm]
			\draw
			(0,2)coordinate (a1)
			(1,0)coordinate (a2)
			;
			\xwindow{a1}{a2}{2}
		\end{scope}
		
		\begin{scope}[xshift=0cm,yshift=3cm]
			\foreach \x in {0,...,5}{
				\foreach \y in {0,1}
				\draw (\x,\y)node[novtx]{};
			}
		\end{scope}
		
		\begin{scope}[xshift=1cm,yshift=0cm]
			\foreach \x in {0,...,4}{
				\foreach \y in {0,1}
				\draw (\x,\x+\y)node[novtx]{};
			}
			\draw (-1,0)node[novtx]{};
		\end{scope}
		
		\begin{scope}[xshift=2cm,yshift=0cm]
			\foreach \x in {0,1}{
				\draw (\x,\x)node[vtx]{};
			}
			\draw (2,2)node[novtx]{};
		\end{scope}
		
		\begin{scope}[xshift=3cm,yshift=0cm]
			\foreach \x in {0,1}{
				\foreach \y in {0,1}
				\draw (\x+\y,\x)node[novtx]{};
			}
			\draw (2,2)node[novtx]{};
		\end{scope}
		
		\begin{scope}[xshift=7 cm,yshift=0cm]
			\draw (0,0)grid(5,5);
			
			\begin{scope}[xshift=0cm,yshift=5cm]
				\draw
				(0,0)coordinate (a1)
				(1,0)coordinate (a2)
				(2,0)coordinate (a3)
				;
				\foreach \x in {1,2}{
					\draw (a\x)node[vtx]{};
				}
				\foreach \x in {3}{
					\draw (a\x)node[novtx]{};
				}
			\end{scope}
			
			\draw
			(0,2)coordinate (x1)
			(1,0)coordinate (x2)
			(4,2)coordinate (x3)
			(5,0)coordinate (x4)
			;
			\xwindow{x1}{x2}{2}
			\xwindow{x3}{x4}{2}
			
			\begin{scope}[xshift=0cm,yshift=3cm]
				\foreach \x in {0,...,5}{
					\foreach \y in {0,1}
					\draw (\x,\y)node[novtx]{};
				}
			\end{scope}
			
			\begin{scope}[xshift=2cm,yshift=0cm]
				\foreach \x in {0,1}{
					\draw (\x,2*\x)node[vtx]{};
				}
			\end{scope}
			
			\foreach \y in {1,3}{
				\begin{scope}[xshift=\y cm,yshift=0cm]
					\foreach \x in {0,1,2}{
						\draw (\x,2*\x)node[novtx]{};
					}
				\end{scope}
			}
			
			\foreach \y in {2,3}{
				\begin{scope}[xshift=\y cm,yshift=1cm]
					\foreach \x in {0,1,2}{
						\draw (\x,2*\x)node[novtx]{};
					}
				\end{scope}
			}
		\end{scope}
		
		\begin{scope}[xshift=14 cm,yshift=0cm]
			\draw (0,0)grid(5,5);
			
			\begin{scope}[xshift=0cm,yshift=5cm]
				\draw
				(0,0)coordinate (a1)
				(1,0)coordinate (a2)
				(2,0)coordinate (a3)
				;
				\foreach \x in {1,2}{
					\draw (a\x)node[vtx]{};
				}
				\foreach \x in {3}{
					\draw (a\x)node[novtx]{};
				}
			\end{scope}
			
			\begin{scope}[xshift=0cm,yshift=3cm]
				\foreach \x in {0,...,5}{
					\foreach \y in {0,1}
					\draw (\x,\y)node[novtx]{};
				}
			\end{scope}
			
			\begin{scope}[xshift=0cm,yshift=0cm]
				\foreach \x in {0,...,4}{
					\foreach \y in {0,1}
					\draw (\x,\x+\y)node[novtx]{};
				}
				\draw (5,5)node[novtx]{};
			\end{scope}
			
			\begin{scope}[xshift=2cm,yshift=1cm]
				\foreach \x in {0,1}{
					\draw (\x,\x)node[vtx]{};
				}
				\draw (-1,-1)node[novtx]{};
			\end{scope}
			
			\begin{scope}[xshift=2cm,yshift=0cm]
				\foreach \x in {0,1}{
					\foreach \y in {0,1,2}{
						\draw (\x+\y,\y)node[novtx]{};
				}}
			\end{scope}
			
			\begin{scope}[xshift=4cm,yshift=0cm]
				\draw
				(0,0)coordinate (a)
				(1,0)coordinate (b)
				(1,1)coordinate (c)
				;
				\xtriwindow{a}{b}{c}{2}
			\end{scope}
		\end{scope}
		
	\end{tikzpicture}
\end{center}

For case 14, we will need a slightly more detailed analysis. For this, we introduce a new labeling system in pictures that will help the case checking later on.

The points $(2,6)$ and $(4,3)$ are labeled 1 and 2 in the following figure. They form a border triangle with the bottom-right point $(6,1)$, which forbids $(6,1)$ from being in $T$. We notate this in the picture with a red $1,2$ label over that point. Further, the points $(4,1)$ and $(4,3)$ make it impossible to have both points $(6,2)$ and $(6,3)$ from the `right $2\times 3$ window' $\{5,6\} \times [3]$. We conclude that there is at most one point from $T$ in that $2\times 3$ window. Thus we conclude that in this last case, as in all others, $T$ has at most 9 points. 

\begin{center}
	\begin{tikzpicture}[scale=0.8]
		\draw (0,0)grid(5,5);
		
		\begin{scope}[xshift=0cm,yshift=5cm]
			\draw
			(0,0)coordinate (a1)
			(1,0)coordinate (a2)
			(2,0)coordinate (a3)
			;
			\foreach \x in {1,2}{
				\draw (a\x)node[vtx]{};
			}
			\foreach \x in {3}{
				\draw (a\x)node[novtx]{};
			}
		\end{scope}
		
		\begin{scope}[xshift=0cm,yshift=3cm]
			\foreach \x in {0,...,5}{
				\foreach \y in {0,1}
				\draw (\x,\y)node[novtx]{};
			}
		\end{scope}
		
		\begin{scope}[xshift=3cm,yshift=0cm]
			\foreach \x in {0,2}{
				\draw (0,\x)node[vtx]{};
			}
			\draw (0,1)node[novtx]{};
		\end{scope}
		
		\foreach \y in {2,4}{
			\begin{scope}[xshift=\y cm,yshift=0cm]
				\foreach \x in {0,...,5}{
					\draw (0,\x)node[novtx]{};
				}
			\end{scope}
		}
		
		\draw (0,5)node[right, blue]{};
		\draw (1,5)node[right, blue]{$1$};
		\draw (3,2)node[right, blue]{$2$};
		\draw (3,0)node[right, blue]{};
		
		\draw (5,0)node[red,yshift=-0.1cm]{$1,2$};
		
		\begin{scope}[xshift=5 cm,yshift=1cm]
			\draw 
			(0,1)coordinate (a)
			(0,0)coordinate (b)
			;
			\xwindow{a}{b}{1}
		\end{scope}
		
	\end{tikzpicture}
\end{center}

\subsection{Proof of \cref{lem:6x6app2}}

We now assume that $S$ has no points at distance $1$ but has points at distance $\sqrt{2}$. 

Throughout this argument, in the grid $[k]^2$, the \emph{main diagonal} is  the set of $k$ lattice points $(x,y)$ for which $x-y=0$; the \emph{main antidiagonal} is the set of points for which $x+y=k$. Similarly, the \emph{$d^\text{th}$ diagonals} are the sets of $2(k-d)$ lattice points in $[k]$ with $x-y=\pm d$; and the \emph{$d^\text{th}$ antidiagonals} are the ones with $x+y=k\pm d$.

We will make use of the following Claim (see \cref{fig:claim}).

\begin{claim} \label{claim:appendix}
	If $S$ has no points at distance 1, then an upper or lower triangular part of the grid $[k]^2$ can contain at most 3 points when $k=3$, and at most 6 points when $k=5$. The only configurations achieving equality are shown in \cref{fig:equality_claim}. Moreover, when $k=4$, if we further assume there are no points at distance $\sqrt{2}$ on the main diagonal or antidiagonal, there are at most 3 points on such a triangular grid. 
\end{claim}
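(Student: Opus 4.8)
The plan is to reduce the ``upper'' and ``lower'' cases to a single one by reflecting across the main diagonal, and then to analyze the triangular region one diagonal at a time. First I would decompose the triangular part of $[k]^2$ into the diagonals parallel to its hypotenuse, so that a region of ``size'' $k$ splits into diagonals of lengths $k, k-1, \dots, 1$. The point of this decomposition is that all of the relevant interaction is local and short: two lattice points on a common such diagonal are at distance at least $\sqrt 2$, while the closest points on two neighboring diagonals are at distance exactly $1$. Hence every pair that the hypotheses can forbid is realized by a vector of length $1$ or $\sqrt 2$ connecting diagonals at offset at most two, and each diagonal only ``talks to'' its immediate neighbors. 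This is also what singles out the hypotenuse (a piece of the main diagonal, and for $k=4$ also of the antidiagonal) as the delicate locus, and it is exactly why the even case needs the extra hypothesis about distance-$\sqrt 2$ pairs there.

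The core is an induction on $k$ obtained by peeling off the two longest diagonals. I would isolate a one-strip lemma: the two outermost diagonals of a size-$k$ triangular part (of lengths $k$ and $k-1$) together contain at most $(k+1)/2$ points of $S$. This is a short finite verification on a width-two strip, using that consecutive points along a single diagonal sit at distance $\sqrt 2$ and that the horizontally or vertically adjacent points across the two diagonals sit at distance $1$. Deleting these two diagonals leaves a triangular part of size $k-2$, so if $f(k)$ denotes the maximum number of admissible points we get the recursion $f(k) \le f(k-2) + (k+1)/2$. With the base case $f(1)=1$, the odd chain gives $f(3) \le 1 + 2 = 3$ and $f(5) \le 3 + 3 = 6$, matching the claim. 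To pin down the equality cases I would record, at each level of the recursion, which strip configurations actually attain $(k+1)/2$; because these are forced, they assemble into precisely the extremal configurations depicted in \cref{fig:equality_claim}.

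For $k=4$ the same recursion only yields $f(4)\le 4$: the two central diagonals (pieces of the main diagonal and the antidiagonal) admit one extra point by packing at distance $\sqrt 2$ along either of them. The extra hypothesis forbids exactly those packings, lowering the strip count on the central diagonals and giving the sharper bound $3$. I expect the \emph{main obstacle} to be the one-strip lemma together with its equality analysis: each strip is small, but one must verify not merely the cardinality bound but that equality forces the specific alternating pattern, and then check that these per-level patterns are mutually compatible so that the only global extremal sets are those in \cref{fig:equality_claim}. The two structural facts above---the reflection symmetry across the hypotenuse and the fact that all forbidden pairs have length $1$ or $\sqrt 2$ and connect nearby diagonals---are what keep this casework finite and, with some patience, routine.
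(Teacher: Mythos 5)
There is a genuine gap: your argument never uses B-stability, and the claim is false without it. The set $S$ in \cref{claim:appendix} is (a window of) a B-stable proper subset of $\Z^2$, so by \cref{prop:unimodularTriangle} it contains at most two points of any border triangle --- in particular it contains no three consecutive collinear lattice points and no three points spanning a triangle of area $1/2$ or $1$. These are the constraints doing almost all the work in the paper's proof (e.g.\ ``it forms three points of a border triangle with $(1,1)$ and $(2,2)$,'' or the step where $(4,4)$, $(5,5)$, $(4,2)$ form a unimodular triangle in the $k=5$ equality analysis). You instead treat the problem as a packing problem governed solely by forbidden \emph{pairs} at distance $1$ (and, for $k=4$, distance $\sqrt 2$ on the hypotenuse). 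Under those hypotheses alone the bounds fail: $\{(1,1),(2,2),(3,3),(3,1)\}$ is a $4$-point subset of the lower triangular part of $[3]^2$ with no two points at distance~$1$, exceeding your claimed bound of~$3$. For the same reason your one-strip lemma is false as stated: the longest diagonal of the size-$k$ triangle can by itself contain all $k$ of its points without any distance-$1$ pair, since consecutive points on a diagonal are at distance $\sqrt 2$, which the hypotheses do not forbid (except, in the $k=4$ case, on the main diagonal and antidiagonal only). So the recursion $f(k)\le f(k-2)+(k+1)/2$ has no valid base to stand on.

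The structural premise that ``each diagonal only talks to its immediate neighbours'' is also not repairable by simply adding B-stability, because B-stability imposes \emph{triple} constraints of unbounded range: three consecutive collinear points with primitive step $(2,1)$, say $(1,1),(3,2),(5,3)$, span the full width of the $k=5$ triangle, and area-$1$ triangles likewise connect diagonals at arbitrary offset. This is why the paper does not decompose by diagonals at all; it conditions on the number of points in the last column (for $k=3,4$) or in a corner $2\times 2$ square (for $k=5$), and then eliminates the remaining points one at a time via explicit border and unimodular triangles, tracking the equality cases by hand. If you want to salvage your outline, you would need to restate the one-strip lemma with B-stability as a hypothesis, prove it for strips that include the long-range collinear and area constraints, and then you would find that the strips are no longer independent --- at which point you are essentially back to the paper's case analysis.
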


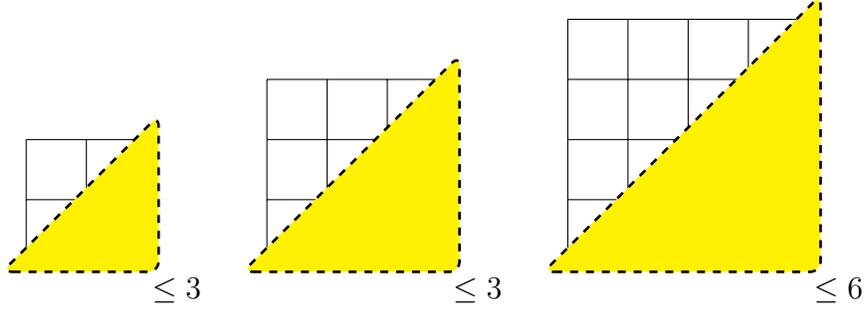
\begin{figure}[h!]
	\centering
	\begin{tikzpicture}[scale=0.8]
		\begin{scope}[xshift=0 cm,yshift=0cm]
			\draw (0,0)grid(2,2);
			\draw 
			(0,0)coordinate (a)
			(2,0)coordinate (b)
			(2,2)coordinate (c)
			;
			\xtriwindow{a}{b}{c}{3}
		\end{scope}
		
		\begin{scope}[xshift=4 cm,yshift=0cm]
			\draw (0,0)grid(3,3);
			\draw 
			(0,0)coordinate (a)
			(3,0)coordinate (b)
			(3,3)coordinate (c)
			;
			\xtriwindow{a}{b}{c}{3}
		\end{scope}
		
		\begin{scope}[xshift=9 cm,yshift=0cm]
			\draw (0,0)grid(4,4);
			\draw 
			(0,0)coordinate (a)
			(4,0)coordinate (b)
			(4,4)coordinate (c)
			;
			\xtriwindow{a}{b}{c}{6}
		\end{scope}
	\end{tikzpicture}
	\caption{\cref{claim:appendix}: The upper or lower triangular part of the grid $[k]^2$ can contain at most 3 points when $k=3,4$, and at most 6 points when $k=5$. Moreover, for $k=3$ and $k=5$, the only configurations achieving equality are shown below.}
	\label{fig:claim}
\end{figure}

\begin{figure}[h!]
	\centering
	\begin{tikzpicture}[scale=0.8]
		\begin{scope}[xshift=0 cm,yshift=0cm]
			
			\draw (0,0)grid(2,2);
			\draw 
			(0,0)coordinate (a)
			(2,0)coordinate (b)
			(2,2)coordinate (c)
			;
			\foreach \x in {a,b,c}{
				\draw (\x)node[vtx]{};
			}
			\draw (1,0)node[novtx]{};
			\draw (1,1)node[novtx]{};
			\draw (2,1)node[novtx]{};
		\end{scope}
		
		\begin{scope}[xshift=4 cm,yshift=0cm]
			
			\draw (0,0)grid(4,4);
			\draw 
			(0,0)coordinate (a)
			(4,0)coordinate (b)
			(4,4)coordinate (c)
			;
			
			\draw (0,0)node[vtx]{};
			\draw (1,1)node[vtx]{};
			\draw (2,2)node[novtx]{};
			\draw (3,3)node[vtx]{};
			\draw (4,4)node[vtx]{};
			
			\foreach \x in {0,...,2}{
				\foreach \y in {1,2}{
					\draw (\x+\y,\x)node[novtx]{};
			}}
			\draw (4,3)node[novtx]{};
			\draw (3,0)node[vtx]{};
			\draw (4,1)node[vtx]{};
			\draw (4,0)node[novtx]{};
			
		\end{scope}
		
		\begin{scope}[xshift=10 cm,yshift=0cm]
			
			\draw (0,0)grid(4,4);
			\draw 
			(0,0)coordinate (a)
			(4,0)coordinate (b)
			(4,4)coordinate (c)
			;
			
			\draw (0,0)node[vtx]{};
			\draw (1,1)node[novtx]{};
			\draw (2,2)node[vtx]{};
			\draw (3,3)node[novtx]{};
			\draw (4,4)node[vtx]{};
			
			\foreach \x in {0,...,3}{
				\draw (\x+1,\x)node[novtx]{};
			}
			
			\draw 
			(2,0)coordinate (a)
			(4,0)coordinate (b)
			(4,2)coordinate (c)
			;
			\foreach \x in {a,b,c}{
				\draw (\x)node[vtx]{};
			}
			\draw (3,0)node[novtx]{};
			\draw (3,1)node[novtx]{};
			\draw (4,1)node[novtx]{};
			
		\end{scope}
		
	\end{tikzpicture}
	\caption{The ``optimal'' equality cases in \cref{claim:appendix}.}
	\label{fig:equality_claim}
\end{figure}
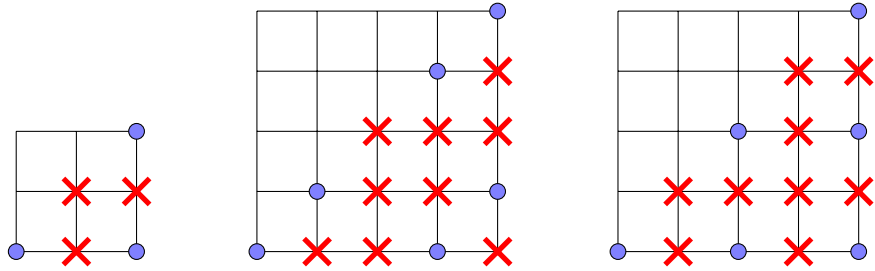

\begin{proof}
	In each of the cases, let $[k]_L=\{(x,y)\in\mathbb{Z}^2: 1\leq y\leq x\leq k\}$, and $T=S\cap [k]_L$. The proofs for the upper triangular part are immediate by symmetry.
	
	For $k=3$, note that $T\cap [2]_L$ either contains at most one point or is $\{(1,1),(2,2)\}$. Now condition on the number $t_3$ of points in the last column, which is at most 2 because of the distance 1 restriction. If $t_3=0$, then clearly $|T|\leq 2$. If $t_3=1$, then regardless of which point of the last column is in $T$, it forms three points of a border triangle with $(1,1)$ and $(2,2)$, and hence $|T|\le 2$. Finally,
	if $t_3=2$, then the second column must be empty and so either the $(1,1)$ is not in $T$ and $|T|=2$, or it is, in which case we have the first $k=3$ case in \cref{fig:equality_claim}.
	
	For $k=4$, we break $[4]_L$ into the last column and $[3]_L$. We condition on the number $t_4$ of points of $T$ in the last column, which is at most 2 because of the distance 1 restriction. If $t_4=0$ then all points of $T$ are in $[3]_L$, so as before $|T|\leq 3$. If $t_4=1$ then there can be at most one point of $T$ in the third column, and the extra non-diagonal assumption in this $k=4$ case implies that there is at most one point of $T$ in $[2]_L$, for a total of $|T|\leq 3$. Finally, if $t_4=2$ then we conclude as in \cref{fig:claim4}:
	
	\begin{figure}[h!]
		\centering
		\begin{tikzpicture}[scale=0.8]
			\begin{scope}[xshift=0 cm,yshift=0cm]
				
				\draw (0,0)grid(3,3);
				\draw
				(3,0)coordinate (a)
				(3,1)coordinate (b)
				(3,2)coordinate (c)
				(3,3)coordinate (d)
				(2,0)coordinate (e)
				(2,1)coordinate (f)
				(2,2)coordinate (g)
				;
				\draw (a)node[vtx]{};
				\draw (b)node[novtx]{};
				\draw (c)node[novtx]{};
				\draw (d)node[vtx]{};
				\draw (e)node[novtx]{};
				\draw (f)node[novtx]{};
				\draw (g)node[novtx]{};
				
				\draw
				(0,0)coordinate (h)
				(1,0)coordinate (i)
				(1,1)coordinate (j)
				;
				\xtriwindow{h}{i}{j}{1};
			\end{scope}
			
			\begin{scope}[xshift=7 cm,yshift=0cm]
				
				\draw (0,0)grid(3,3);
				\draw
				(3,0)coordinate (a)
				(3,1)coordinate (b)
				(3,2)coordinate (c)
				(3,3)coordinate (d)
				(2,0)coordinate (e)
				(2,1)coordinate (f)
				(2,2)coordinate (g)
				;
				\draw (a)node[vtx]{};
				\draw (b)node[novtx]{};
				\draw (c)node[vtx]{};
				\draw (d)node[novtx]{};
				\draw (e)node[novtx]{};
				\draw (f)node[novtx]{};
				\draw (g)node[novtx]{};
				
				\draw
				(0,0)coordinate (h)
				(1,0)coordinate (i)
				(1,1)coordinate (j)
				;
				\xtriwindow{h}{i}{j}{1};
			\end{scope}
			
			\begin{scope}[xshift=14 cm,yshift=0cm]
				
				\draw (0,0)grid(3,3);
				\draw
				(3,0)coordinate (a)
				(3,1)coordinate (b)
				(3,2)coordinate (c)
				(3,3)coordinate (d)
				(2,0)coordinate (e)
				(2,1)coordinate (f)
				(2,2)coordinate (g)
				;
				\draw (a)node[novtx]{};
				\draw (b)node[vtx]{};
				\draw (c)node[novtx]{};
				\draw (d)node[vtx]{};
				\draw (e)node[vtx]{};
				\draw (f)node[novtx]{};
				\draw (g)node[novtx]{};
				
			\end{scope}
			
		\end{tikzpicture}
		\caption{The three possible locations of two points in the last column. In the first two cases, the entire third column is forbidden by the distance $1$ and non-diagonal assumptions, so there is only at most one other point in $T$. In the last case, the same argument holds unless $(3,1)\in T$, but then each point in $[2]_L$ is forbidden because of triangles they form with it and $(4,2)$.}
		\label{fig:claim4}
	\end{figure}
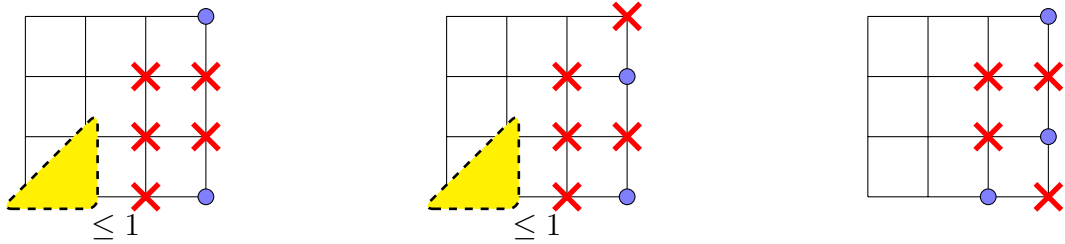
	
	For $k=5$, we cover $[5]_L$ by three pieces: the triangle $[3]_L$, the ``top triangle'' and ``the square'': 
	
	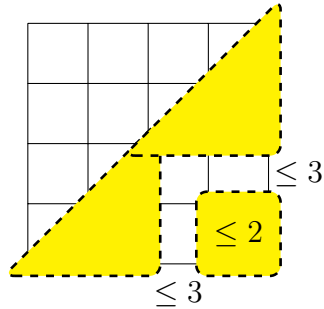
\begin{figure}[h!]
		\centering
		\begin{tikzpicture}[scale=0.8]
			\begin{scope}[xshift=9cm,yshift=0cm]
				\draw (0,0)grid(4,4);
				\draw 
				(0,0)coordinate (a)
				(2,0)coordinate (b)
				(2,2)coordinate (c)
				(4,2)coordinate (d)
				(4,4)coordinate (e)
				(3,1)coordinate (f)
				(4,0)coordinate (g)
				;
				\xtriwindow{a}{b}{c}{3}
				\xtriwindow{c}{d}{e}{3}
				\xcenteredwindow{f}{g}{2}
			\end{scope}
		\end{tikzpicture}
		\caption{Covering $[5]_L$ by two triangles and a square. Note that both triangles have at most 3 points by the $k=3$ case above, and the point $(3,3)$ is in both triangles.}
	\end{figure}
	
	We condition on the number of points $t_s$ in the square, which is at most 2 because of the distance 1 condition. The cases $t_s=0$ and $t_s=1$ can be handled simultaneously. In both, $|T|\leq 5$ unless one triangle has 3 points of $T$ and the other has at least 2 points of $T$ distinct from the first triangle. Without loss of generality, suppose $T\cap [3]_L$ has 3 points; this implies it must have the optimal $k=3$ configuration, and hence $(3,3)\in T$. But then the top triangle must have two additional points of $T$, that is, three points of $T$ as well, yielding the same configuration. One then easily checks that the only point of the square which can be in $T$ is $(5,1)$, and including it yields the second $k=5$ case in \cref{fig:equality_claim}.
	
	\begin{figure}[h!]
		\centering
		\begin{tikzpicture}[scale=0.8]
			\begin{scope}[xshift=0 cm,yshift=0cm]
				
				\draw (0,0)grid(4,4);
				\draw
				(0,0)coordinate (a)
				(2,0)coordinate (b)
				(2,2)coordinate (c)
				(4,2)coordinate (d)
				(4,4)coordinate (e)
				;
				\draw (a)node[vtx]{};
				\draw (b)node[vtx]{};
				\draw (c)node[vtx]{};
				\draw (d)node[vtx]{};
				\draw (e)node[vtx]{};
				
				\draw
				(3,0)coordinate (f)
				(3,1)coordinate (g)
				(4,1)coordinate (h)
				;
				\draw (f)node[novtx]{};
				\draw (g)node[novtx]{};
				\draw (h)node[novtx]{};
				
			\end{scope}
			
			\begin{scope}[xshift=7 cm,yshift=0cm]
				
				\draw (0,0)grid(4,4);
				\foreach \i in {0,1,2}{
					\draw (2,\i)node[novtx]{};
					\draw (2+\i,2)node[novtx]{};
				}
				\foreach \x in {0,1,3,4}{
					\draw (\x,\x)node[vtx]{};
				}
				\draw (3,3)node[right, blue]{$1$};
				\draw (4,4)node[right, blue]{$2$};
				\draw (3,1)node[red]{$1,2$};
			\end{scope}
			
		\end{tikzpicture}
		\caption{When $t_s<2$, then there are enough points in $T$ only if both triangles must have the optimal $k=3$ configuration. When $t_s=2$, the third row and columns have no points of $T$. In either case, one of the optimal $k=5$ configurations is achieved.}
		\label{fig:claim5}
	\end{figure}
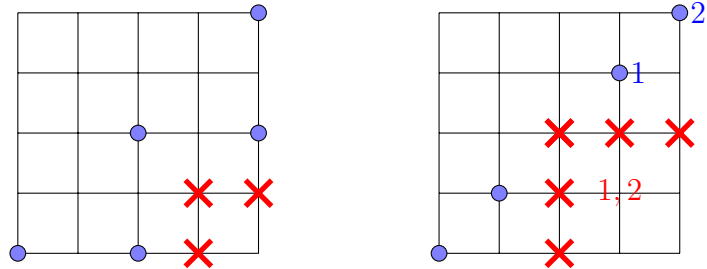
	
	Finally, suppose that $t_s=2$. Any pair of two points of the square being in $T$ forbids the four points adjacent to the square. In particular, this forbids the optimal $k=3$ configuration in either triangle, and so both have at most 2 points of $T$. Thus, $|T|\leq 5$ unless both have exactly 2 points of $T$, neither of which is $(3,3)$. This implies that the third column has no points in $T$, which means that $T\cap [2]_L=\{(1,1),(2,2)\}$ as previously noted. Analogously, the two points of $T$ in the upper triangle are forced to be $(4,4)$ and $(5,5)$. Finally, these two points form a unimodular triangle with $(4,2)$, which fixes the pair of points of $T$ in the square, and in particular yields the first $k=5$ case in \cref{fig:equality_claim}.
\end{proof}

This claim for $k=3$ and $k=5$, together with its equality cases (\cref{fig:equality_claim}) deal with the cases when there are points at distance $\sqrt{2}$ on the main diagonal or a second diagonal (see \cref{fig:maindiagonal,fig:2maindiagonal}).

\begin{figure}[h!]
	\centering
	\begin{tikzpicture}[scale=0.8]
		\begin{scope}[xshift=0 cm,yshift=0cm]
			
			\draw (0,0)grid(5,5);
			
			\foreach \x in {0,1,2,3}{
				\foreach \y in {1,2}{
					\draw (\x,\y+\x)node[novtx]{};
			}}
			
			\foreach \x in {2,3,4,5}{
				\foreach \y in {1,2}{
					\draw (\x,-\y+\x)node[novtx]{};
			}}
			
			\draw (4,5)node[novtx]{};
			\draw (1,0)node[novtx]{};
			
			\draw
			(3,0)coordinate (a1)
			(5,0)coordinate (a2)
			(5,2)coordinate (a3)
			;
			\xtriwindow{a1}{a2}{a3}{3}
			
			\draw
			(0,5)coordinate (b1)
			(2,5)coordinate (b2)
			(0,3)coordinate (b3)
			;
			\xinvtriwindow{b1}{b2}{b3}{3}
			
		\end{scope}
		
		\begin{scope}[xshift=7 cm,yshift=0cm]
			
			\draw (0,0)grid(5,5);
			
			\foreach \x in {0,1,2,3}{
				\foreach \y in {1,2}{
					\draw (\x,\y+\x)node[novtx]{};
			}}
			
			\foreach \x in {2,3,4,5}{
				\foreach \y in {1,2}{
					\draw (\x,-\y+\x)node[novtx]{};
			}}
			
			\draw (4,5)node[novtx]{};
			\draw (1,0)node[novtx]{};
			
			\draw
			(3,0)coordinate (a1)
			(5,0)coordinate (a2)
			(5,2)coordinate (a3)
			;
			
			\draw
			(0,5)coordinate (b1)
			(2,5)coordinate (b2)
			(0,3)coordinate (b3)
			;
			
			\foreach \x in {a1,a2,a3, b1, b2, b3}{
				\draw (\x)node[vtx]{};
			}
			
			\draw (0,4)node[novtx]{};
			\draw (1,4)node[novtx]{};
			\draw (1,5)node[novtx]{};
			\draw (4,0)node[novtx]{};
			\draw (4,1)node[novtx]{};
			\draw (5,1)node[novtx]{};
			
		\end{scope}
		
		\begin{scope}[xshift=14 cm,yshift=0cm]
			
			\draw (0,0)grid(5,5);
			
			\foreach \x in {0,1,2,3}{
				\foreach \y in {1,2}{
					\draw (\x,\y+\x)node[novtx]{};
			}}
			
			\foreach \x in {2,3,4,5}{
				\foreach \y in {1,2}{
					\draw (\x,-\y+\x)node[novtx]{};
			}}
			
			\draw (4,5)node[novtx]{};
			\draw (1,0)node[novtx]{};
			
			\draw
			(3,0)coordinate (a1)
			(5,0)coordinate (a2)
			(5,2)coordinate (a3)
			;
			
			\draw
			(0,5)coordinate (b1)
			(2,5)coordinate (b2)
			(0,3)coordinate (b3)
			;
			
			\draw (0,4)node[novtx]{};
			\draw (1,4)node[novtx]{};
			\draw (1,5)node[novtx]{};
			\draw (4,0)node[novtx]{};
			\draw (4,1)node[novtx]{};
			\draw (5,1)node[novtx]{};
			
			\foreach \x in {a1,a2,a3, b1, b2, b3}{
				\draw (\x)node[vtx]{};
			}
			\foreach \x in {1,2,3}{
				\draw (b\x)node[right, blue]{$\x$};
			}
			
			\draw (4,4)node[red]{$1,2$};
			\draw (1,1)node[red]{$1,3$};
			
			\draw (a1)node[right, blue]{$5$};
			\draw (a2)node[right, blue]{$6$};
			\draw (a3)node[right, blue]{$4$};
			
			\draw (2,2)node[red]{$1,5$};
			\draw (3,3)node[red]{$2,6$};
			
		\end{scope}
		
	\end{tikzpicture}
	\caption{There are two points in $T$ at distance $\sqrt{2}$ on the main diagonal. We apply the equality case for $k=3$ in \cref{claim:appendix} to conclude $T$ has at most 9 points.}
	\label{fig:maindiagonal}
\end{figure}
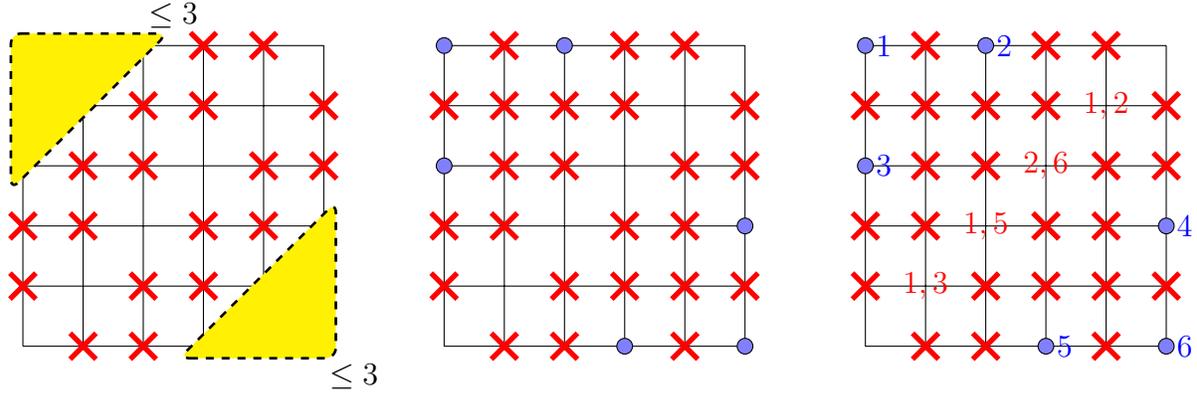

\begin{figure}[h!]
	\centering
	\begin{tikzpicture}[scale=0.8]
		\begin{scope}[xshift=0 cm,yshift=0cm]
			
			\draw (0,0)grid(5,5);
			\draw
			(0,2)coordinate (a)
			(1,3)coordinate (b)
			;
			\draw (a)node[vtx]{};
			\draw (b)node[vtx]{};
			
			\foreach \x in {0,1,2,3,4}{
				\foreach \y in {0,1}{
					\draw (\x,\y+\x)node[novtx]{};
			}}
			
			\foreach \x in {0,1}{
				\foreach \y in {3,4}{
					\draw (\x,\y+\x)node[novtx]{};
			}}
			
			\draw (2,5)node[novtx]{};
			\draw (2,4)node[novtx]{};
			\draw (5,5)node[novtx]{};
			
			\draw
			(1,0)coordinate (a1)
			(5,0)coordinate (a2)
			(5,4)coordinate (a3)
			;
			\xtriwindow{a1}{a2}{a3}{6}
			
		\end{scope}
		
		\begin{scope}[xshift=7 cm,yshift=0cm]
			
			\draw (0,0)grid(5,5);
			\draw
			(0,2)coordinate (a)
			(1,3)coordinate (b)
			;
			\draw (a)node[vtx]{};
			\draw (b)node[vtx]{};
			
			\foreach \x in {0,1,2,3,4}{
				\foreach \y in {0,1}{
					\draw (\x,\y+\x)node[novtx]{};
			}}
			
			\foreach \x in {0,1}{
				\foreach \y in {3,4}{
					\draw (\x,\y+\x)node[novtx]{};
			}}
			
			\draw (2,5)node[novtx]{};
			\draw (2,4)node[novtx]{};
			\draw (5,5)node[novtx]{};

			\draw (0,5)node[vtx]{};
			\draw (0,5)node[right, blue]{$1$};
			
			\draw (3,5)node[vtx]{};
			
			\draw (1,3)node[right, blue]{$2$};
			
			\draw (2,1)node[red]{$1,2$};
			\draw (3,0)node[red]{$1,2$};
		\end{scope}
		
	\end{tikzpicture}
	\caption{There are two points in $T$ at distance $\sqrt{2}$ on a second diagonal. We apply the equality case for $k=5$ in \cref{claim:appendix} to conclude $T$ has at most 9 points.}
	\label{fig:2maindiagonal}
\end{figure}

Now, we assume there are no points at distance $\sqrt{2}$ either on the main diagonal or a second diagonal. The latter assumption allows us to use of the claim for $k=4$ when there are points at distance $\sqrt{2}$ on a first diagonal (see \cref{fig:1maindiagonal}) or a third diagonal (see \cref{fig:3maindiagonal}). 

\begin{figure}[h!]
	\centering
	\begin{tikzpicture}[scale=0.8]
		\begin{scope}[xshift=0 cm,yshift=0cm]
			
			\draw (0,0)grid(5,5);
			\draw
			(0,1)coordinate (a)
			(1,2)coordinate (b)
			;
			\draw (a)node[vtx]{};
			\draw (b)node[vtx]{};
			
			\foreach \x in {0,1,2}{
				\foreach \y in {2,3}{
					\draw (\x,\y+\x)node[novtx]{};
			}}
			
			\foreach \x in {1,2,3,4,5}{
				\foreach \y in {0,1}{
					\draw (\x,-\y+\x)node[novtx]{};
			}}
			
			\draw (2,3)node[novtx]{};
			\draw (0,0)node[novtx]{};
			\draw (3,5)node[novtx]{};
			
			\draw
			(2,0)coordinate (a1)
			(5,0)coordinate (a2)
			(5,3)coordinate (a3)
			;
			\xtriwindow{a1}{a2}{a3}{3}
			
			\draw
			(0,5)coordinate (b1)
			(1,5)coordinate (b2)
			(0,4)coordinate (b3)
			;
			\xinvtriwindow{b1}{b2}{b3}{2}
			
		\end{scope}
		
	\end{tikzpicture}
	\caption{There are two points in $T$ at distance $\sqrt{2}$ on a first diagonal. We apply the $k=4$ case of \cref{claim:appendix} to conclude $T$ has at most 9 points.}
	\label{fig:1maindiagonal}
\end{figure}
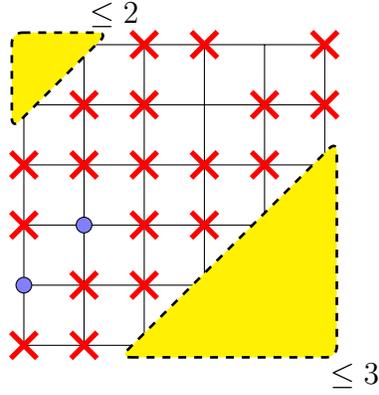

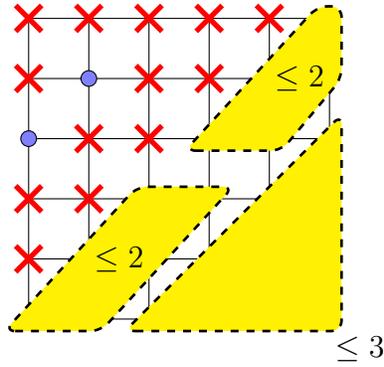
\begin{figure}[h!]
	\centering
	\begin{tikzpicture}[scale=0.8]
		\begin{scope}[xshift=0 cm,yshift=0cm]
			
			\draw (0,0)grid(5,5);
			\draw
			(0,3)coordinate (a)
			(1,4)coordinate (b)
			;
			\draw (a)node[vtx]{};
			\draw (b)node[vtx]{};
			
			\foreach \x in {0,1,2,3}{
				\foreach \y in {1,2}{
					\draw (\x,\y+\x)node[novtx]{};
			}}
			
			\draw (0,5)node[novtx]{};
			\draw (0,4)node[novtx]{};
			\draw (1,5)node[novtx]{};
			\draw (2,5)node[novtx]{};
			\draw (4,5)node[novtx]{};
		\end{scope}
		
		\begin{scope}[xshift=0 cm,yshift=0cm]
			\draw
			(0,-1)coordinate (a)
			(0,0)coordinate (b1)
			(1,0)coordinate (b2)
			(3,2)coordinate (b3)
			(2,2)coordinate (b4)
			(3,3)coordinate (b5)
			(4,3)coordinate (b6)
			(5,4)coordinate (b7)
			(5,5)coordinate (b8)
			(2,0)coordinate (a1)
			(5,0)coordinate (a2)
			(5,3)coordinate (a3)
			;
			\xskewwindow{b1}{b2}{b3}{b4}
			\xfivewindow{b5}{b6}{b7}{b8}{b8}
			\xtriwindow{a1}{a2}{a3}{3}
		\end{scope}
		
	\end{tikzpicture}
	\caption{There are two points in $T$ at distance $\sqrt{2}$ on a third diagonal. We make use of skewed $2 \times 3$ windows and the $k=4$ case of \cref{claim:appendix} to conclude $T$ has at most 9 points.}
	\label{fig:3maindiagonal}
\end{figure}

We now make the following observation: the above cases show that $T$ cannot have more than $9$ points unless the only points at distance $\sqrt{2}$ are those of either a fourth diagonal or fourth antidiagonal. Without loss of generality, let $T$ contain the two points of the upper fourth diagonal.

Suppose also that $T$ \emph{does not} contain the two points of the lower fourth diagonal. We partition the grid as in \cref{fig:lastcase2} and observe that if $|T|>9$ then each of the six windows must contain the maximum number of points. But there are only two ways to include $2$ points in the `left $2\times 3$ window' avoiding distance $1$ or $\sqrt{2}$ on a lower diagonal; neither yields enough points.

\begin{figure}[h!]
	\centering
	\begin{tikzpicture}[scale=0.8]
		\begin{scope}[xshift=0 cm,yshift=0cm]
			\draw (0,0)grid(5,5);
			\draw
			(0,4)coordinate (a)
			(1,5)coordinate (b)
			;
			\draw (a)node[vtx]{};
			\draw (b)node[vtx]{};
			
			\foreach \x in {0,1,2}{
				\foreach \y in {2,3}{
					\draw (\x,\y+\x)node[novtx]{};
			}}
			
			\draw (0,5)node[novtx]{};
			\draw (3,5)node[novtx]{};
		\end{scope}
		
		\foreach \x/\y in {2/0, 2/2, 4/2}{
			\begin{scope}[xshift=\x cm,yshift=\y cm]
				\draw
				(0,1)coordinate (a)
				(1,0)coordinate (b)
				;
				\xcenteredwindow{a}{b}{1}
			\end{scope}
		}
		
		\begin{scope}[xshift=4 cm,yshift=0 cm]
			\draw
			(0,1)coordinate (a)
			(1,0)coordinate (b)
			;
			\xcenteredwindow{a}{b}{1}
		\end{scope}
		
		\begin{scope}[xshift=0 cm,yshift=0 cm]
			\draw
			(0,2)coordinate (a)
			(1,0)coordinate (b)
			;
			\xcenteredwindow{a}{b}{2}
		\end{scope}
		
		\begin{scope}[xshift=3 cm,yshift=4 cm]
			\draw
			(0,1)coordinate (a)
			(2,0)coordinate (b)
			;
			\xcenteredwindow{a}{b}{2}
		\end{scope}

		\begin{scope}[xshift=7 cm,yshift=0cm]
			\draw (0,0)grid(5,5);
			\draw
			(0,4)coordinate (a)
			(1,5)coordinate (b)
			;
			\draw (a)node[vtx]{};
			\draw (b)node[vtx]{};
			
			\foreach \x in {0,1,2}{
				\foreach \y in {2,3}{
					\draw (\x,\y+\x)node[novtx]{};
			}}
			
			\draw (0,5)node[novtx]{};
			\draw (3,5)node[novtx]{};
			
			\draw (0,1)node[vtx]{};
			\draw (1,0)node[vtx]{};
			
			\draw (0,0)node[novtx]{};
			\draw (1,1)node[novtx]{};
			\draw (1,2)node[novtx]{};
			
			\draw (0,1)node[right, blue]{$1$};
			\draw (1,0)node[right, blue]{$2$};
			
			\draw (2,0)node[red]{$1,2$};
			\draw (2,1)node[red]{$1,2$};
			\draw (3,0)node[red]{$1,2$};
			
			\draw (3,1)node[vtx]{};
			\draw (3,1)node[right, blue]{$3$};
			
			\draw (4,2)node[red]{$2,3$};
			\draw (5,2)node[red]{$2,3$};
			\draw (5,3)node[red]{$2,3$};
			
			\draw (4,3)node[vtx]{};
			\draw (4,3)node[right, blue]{$4$};
			
			\draw (5,4)node[red]{$3,4$};
			\draw (5,5)node[red]{$3,4$};
			\draw (4,5)node[red]{$3,4$};
			\draw (4,4)node[red]{$3,4$};
			
		\end{scope}
		
		\begin{scope}[xshift=14 cm,yshift=0cm]
			\draw (0,0)grid(5,5);
			\draw
			(0,4)coordinate (a)
			(1,5)coordinate (b)
			;
			\draw (a)node[vtx]{};
			\draw (b)node[vtx]{};
			
			\foreach \x in {0,1,2}{
				\foreach \y in {2,3}{
					\draw (\x,\y+\x)node[novtx]{};
			}}
			
			\draw (0,5)node[novtx]{};
			\draw (3,5)node[novtx]{};
			
			\draw (0,0)node[vtx]{};
			\draw (0,0)node[right, blue]{$1$};
			
			\draw (1,2)node[vtx]{};
			\draw (1,2)node[right, blue]{$2$};
			
			\draw (0,1)node[novtx]{};
			\draw (1,0)node[novtx]{};
			\draw (1,1)node[novtx]{};
			
			\draw (3,4)node[red]{$1,2$};
			
			\draw (4,5)node[vtx]{};
			\draw (4,5)node[right, blue]{$3$};
			\draw (5,4)node[vtx]{};
			\draw (5,4)node[right, blue]{$4$};
			
		\end{scope}
		
	\end{tikzpicture}
	\caption{In the first case, we conclude there cannot be 2 points in the top $3\times 2$ window. The second case forces $3$ and $4$ to be in $T$ and so it is analogous to the first case by symmetry.
		\label{fig:lastcase2}}
\end{figure}
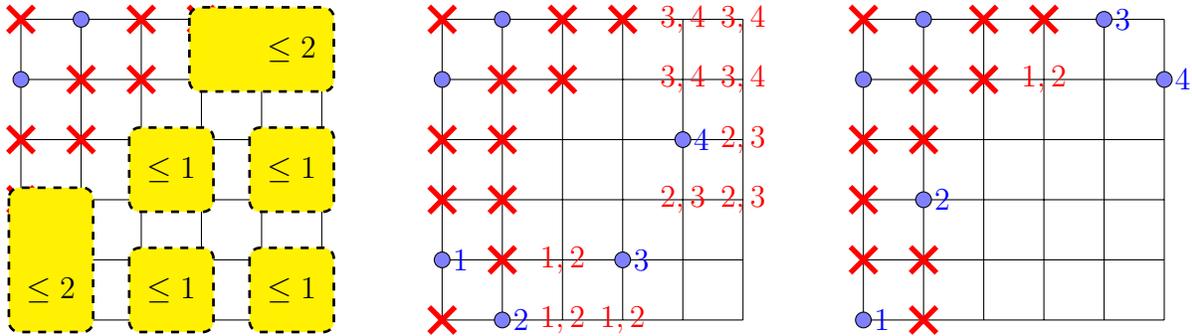

Thus, the last remaining case is when $T$ contains both fourth diagonals, in which case $|T|>9$ can be achieved only if at most one point missing; that is, one of the five remaining windows can have one fewer than its maximum number of points.

Using the notation from \cref{fig:lastcase1}, at least one of points 1 and 2 are in $T$; by symmetry, we may say that point 1 is in $T$. Then the top $3\times 2$ window contains at most one point, and $|T|>9$ only if both 2 and 4 are in $T$. 
If 2 is in $T$, then there is at most one point in both rectangular corner windows. We conclude $|T| \le 9$.

\begin{figure}[h!]
	\centering
	\begin{tikzpicture}[scale=0.8]
		\begin{scope}[xshift=0 cm,yshift=0cm]
			\draw (0,0)grid(5,5);
			\draw
			(0,4)coordinate (a)
			(1,5)coordinate (b)
			;
			\draw (a)node[vtx]{};
			\draw (b)node[vtx]{};
			
			\foreach \x in {0,1,2}{
				\foreach \y in {2,3}{
					\draw (\x,\y+\x)node[novtx]{};
			}}
			
			\draw (0,5)node[novtx]{};
			\draw (3,5)node[novtx]{};
		\end{scope}
		
		\foreach \x/\y in {2/0, 2/2, 4/2}{
			\begin{scope}[xshift=\x cm,yshift=\y cm]
				\draw
				(0,1)coordinate (a)
				(1,0)coordinate (b)
				;
				\xcenteredwindow{a}{b}{1}
			\end{scope}
		}
		
		\begin{scope}[xshift=4 cm,yshift=0 cm]
			\draw
			(0,1)coordinate (a)
			(1,0)coordinate (b)
			;
			\xcenteredwindow{a}{b}{2}
		\end{scope}
		
		\begin{scope}[xshift=0 cm,yshift=0 cm]
			\draw
			(0,2)coordinate (a)
			(1,0)coordinate (b)
			;
			\xcenteredwindow{a}{b}{2}
		\end{scope}
		
		\begin{scope}[xshift=3 cm,yshift=4 cm]
			\draw
			(0,1)coordinate (a)
			(2,0)coordinate (b)
			;
			\xcenteredwindow{a}{b}{2}
		\end{scope}
		
		\begin{scope}[xshift=7 cm,yshift=0cm]
			\draw (0,0)grid(5,5);
			\draw
			(0,4)coordinate (a)
			(1,5)coordinate (b)
			;
			\draw (a)node[vtx]{};
			\draw (b)node[vtx]{};
			
			\foreach \x in {0,1,2}{
				\foreach \y in {2,3}{
					\draw (\x,\y+\x)node[novtx]{};
			}}
			
			\draw (0,5)node[novtx]{};
			\draw (3,5)node[novtx]{};
			
			\draw (4,0)node[vtx]{};
			\draw (5,1)node[vtx]{};
			
			\draw (5,0)node[novtx]{};
			\draw (2,0)node[novtx]{};
			\draw (3,1)node[novtx]{};
			\draw (4,2)node[novtx]{};
			\draw (5,3)node[novtx]{};
			\draw (3,0)node[novtx]{};
			\draw (4,1)node[novtx]{};
			\draw (5,2)node[novtx]{};
			
			\draw (4,3)node[vtx]{};
			\draw (2,1)node[vtx]{};
			\draw (4,3)node[right, blue]{$1$};
			\draw (2,1)node[right, blue]{$2$};
			
			\draw (5,1)node[right, blue]{$3$};
			\draw (4,0)node[right, blue]{$5$};
			
			\draw (3,4)node[red]{$1,3$};
			\draw (4,4)node[red]{$1,3$};
			\draw (4,5)node[red]{$1,3$};
			\draw (5,4)node[red]{$1$};
			
			\draw (5,5)node[right, blue]{$4$};
			\draw (5,5)node[vtx]{};
			
			\draw (0,4)node[vtx]{};
			
			\draw (0,1)node[red]{$2,5$};
			\draw (1,0)node[red]{$2,5$};
			\draw (1,1)node[red]{$2,5$};
			\draw (1,2)node[red]{$2,5$};
		\end{scope}
		
	\end{tikzpicture}
	\caption{Without loss of generality, point $1$ is in $T$. But then both the points 2 and 4 must also be in $T$, and $|T|\leq 9$.}
	\label{fig:lastcase1}
\end{figure}
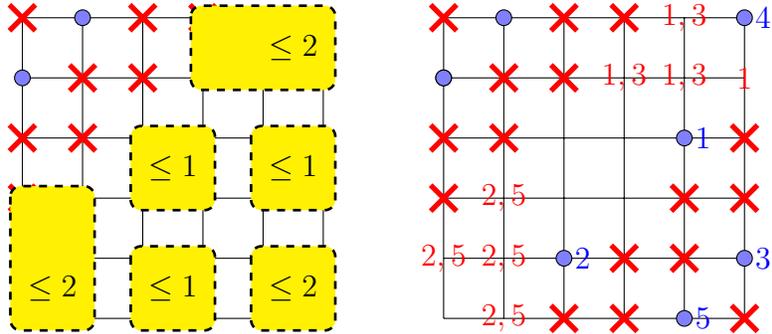

We thus conclude that $|T|\leq 9$ if it has any consecutive points in any diagonal of $[6]$, as desired.
	
\end{document}